\definecolor{darkred}{rgb}{1,0,0} 
\definecolor{darkgreen}{rgb}{0,0.8,0}
\definecolor{darkblue}{rgb}{0,0,1}
 \numberwithin{equation}{section}
\newtheorem {Theorem}{Theorem}
 \numberwithin{Theorem}{section}
\newtheorem {Proposition}[Theorem]{Proposition}
\newtheorem {Corollary}[Theorem]{Corollary}
\theoremstyle{definition}
\newtheorem{Definition}[Theorem]{Definition}
\newtheorem{Remark}[Theorem]{Remark}
\newtheorem{Example}[Theorem]{Example}
\theoremstyle{remark}
\newtheoremstyle{TheoremForIntro} 
        {.6em}{.6em}              
        {\itshape}                      
        {}                              
        {\bfseries}                     
        {. }                             
        { }                             
        {\thmname{#1}\thmnote{ \bfseries #3}}
    \theoremstyle{TheoremForIntro}
\chardef\csname pre amssym.def at\endcsname=\the\catcode`\@
\def\undefine#1{\let#1\undefined}
\def\newsymbol#1#2#3#4#5{\let\next@\relax
 \ifnum#2=\@ne\let\next@\msafam@\else
 \ifnum#2=\tw@\let\next@\msbfam@\fi\fi
 \mathchardef#1="#3\next@#4#5}
\def\mathhexbox@#1#2#3{\relax
 \ifmmode\mathpalette{}{\m@th\mathchar"#1#2#3}%
 \else\leavevmode\hbox{$\m@th\mathchar"#1#2#3$}\fi}
\def\hexnumber@#1{\ifcase#1 0\or 1\or 2\or 3\or 4\or 5\or 6\or 7\or 8\or
 9\or A\or B\or C\or D\or E\or F\fi}
\font\teneufm=eufm10
\font\seveneufm=eufm7
\font\fiveeufm=eufm5
\newcommand{\fsl}{{\mathfrak {sl}}}
\newcommand{\fgl}{{\mathfrak {gl}}}
\newcommand{\fso}{{\mathfrak {so}}}
\newcommand{\fsp}{{\mathfrak {sp}}}
\newcommand{\fg}{{\mathfrak g}}
\newcommand{\fh}{{\mathfrak h}}
\newcommand{\fm}{{\mathfrak m}}
\newcommand{\fu}{{\mathfrak u}}
\newcommand{\sSU}{{\mathsf {SU}}}
\newcommand{\sSL}{{\mathsf {SL}}}
\newcommand{\sSO}{{\mathsf {SO}}}
\newcommand{\sO}{{\mathsf {O}}}
\newcommand{\sSp}{{\mathsf {Sp}}}
\newcommand{\sGL}{{\mathsf {GL}}}
\newcommand{\sG}{{\mathsf G}}
\newcommand{\sH}{{\mathsf H}}
\newcommand{\sS}{{\mathsf S}}
\newcommand{\sP}{{\mathsf P}}
\newcommand{\sU}{{\mathsf U}}
\newcommand{\Sym}{{\mathsf {Sym}}}
\newcommand{\Fuch}{{\mathsf{Fuch}}}
\newcommand{\Hom}{{\mathsf{Hom}}}
\newcommand{\Pic}{{\mathsf{Pic}}}
\newcommand{\Prym}{{\mathsf{Prym}}}
\newcommand{\End}{{\mathsf{End}}}
\newcommand{\Hit}{{\mathsf{Hit}}}
\newcommand{\Inn}{{\mathsf{Inn}}}
\newcommand{\Xx}{{\mathcal X}}
\newcommand{\Ff}{{\mathcal F}}
\newcommand{\Gg}{{\mathcal G}}
\newcommand{\Mm}{{\mathcal M}}
\newcommand{\Oo}{{\mathcal O}}
\newcommand{\Pp}{{\mathcal P}}
\newcommand{\mtrx}[1]{\left (\begin{matrix}#1\end{matrix}\right)}
\def    \I      {{\mathbb I}}
\def    \C      {{\mathbb C}}
\def    \R      {{\mathbb R}}
\def    \Z      {{\mathbb Z}}
\def    \ra     {{\rightarrow}}
\def    \haf    {{\frac{1}{2}}}
\def    \p      {\partial}
\def    \H  {\operatorname{\scriptscriptstyle{H}}}
\newcommand{\An}{\xymatrix{ *{\circ} \ar@{-}[r]|*\dir{ } & *{\circ}\ar@{-}[r]&{\cdots}&*{\circ}\ar@{-}[l]|*\dir{ }\ar@{-}[r]|*\dir{ }&*{\circ} }}
\newcommand{\Anlabel}{\xymatrix@R=.25em{ *{\circ}\ar@<-1ex>@{}[d]^{\alpha_{1}} \ar@{-}[r]|*\dir{ } & *{\circ}\ar@<-1ex>@{}[d]^{\alpha_{2}}\ar@{-}[r]&{\cdots}&*{\circ}\ar@{-}[l]|*\dir{ }\ar@{-}[r]|*\dir{ }\ar@<-2ex>@{}[d]^{\alpha_{n-1}}&*{\circ}\ar@<-1ex>@{}[d]^{\alpha_{n}}\\&&&& }}
\newcommand{\Bn}{\xymatrix{ *{\circ} \ar@{-}[r]|*\dir{ } & *{\circ}\ar@{-}[r]&{\cdots}&*{\circ}\ar@{-}[l]|*\dir{ }\ar@{=}[r]|*\dir{>}&*{\circ} }}
\newcommand{\Bnlabel}{\xymatrix@R=.25em{ *{\circ}\ar@<-1ex>@{}[d]^{\alpha_{1}} \ar@{-}[r]|*\dir{ } & *{\circ}\ar@<-1ex>@{}[d]^{\alpha_{2}}\ar@{-}[r]&{\cdots}&*{\circ}\ar@{-}[l]|*\dir{ }\ar@{=}[r]|*\dir{>}\ar@<-2ex>@{}[d]^{\alpha_{n-1}}&*{\circ}\ar@<-1ex>@{}[d]^{\alpha_{n}}\\&&&& }}
\newcommand{\Cn}{\xymatrix{ *{\circ} \ar@{-}[r]|*\dir{ } & *{\circ}\ar@{-}[r]&{\cdots}&*{\circ}\ar@{-}[l]|*\dir{ }\ar@{=}[r]|*\dir{<}&*{\circ} }}
\newcommand{\Cnlabel}{\xymatrix@R=.25em{ *{\circ}\ar@<-1ex>@{}[d]^{\alpha_{1}} \ar@{-}[r]|*\dir{ } & *{\circ}\ar@<-1ex>@{}[d]^{\alpha_{2}}\ar@{-}[r]&{\cdots}&*{\circ}\ar@{-}[l]|*\dir{ }\ar@{=}[r]|*\dir{<}\ar@<-2ex>@{}[d]^{\alpha_{n-1}}&*{\circ}\ar@<-1ex>@{}[d]^{\alpha_{n}}\\&&&& }}
\newcommand{\Dn}{\xymatrix@R=.25em{&&&&*{\circ} \\ *{\circ} \ar@{-}[r]|*\dir{ } & *{\circ}\ar@{-}[r]&{\cdots}&*{\circ}\ar@{-}[l]|*\dir{ }\ar@{-}[ur]|*\dir{ }\ar@{-}[dr]|*\dir{ }& \\ &&&&*{\circ} }}
\newcommand{\Dnlabel}{\xymatrix@R=.25em{&&&&*{\circ}\ar@<-1ex>@{}[d]^{\alpha_{n-1}} \\ *{\circ}\ar@<-1ex>@{}[d]^{\alpha_{1}} \ar@{-}[r]|*\dir{ } & *{\circ}\ar@<-1ex>@{}[d]^{\alpha_{2}}\ar@{-}[r]&{\cdots}&*{\circ}\ar@{-}[l]|*\dir{ }\ar@{-}[ur]|*\dir{ }\ar@{-}[dr]|*\dir{ }\ar@<-4ex>@{}[d]^{\alpha_{n-2}}& \\ &&&&*{\circ}\ar@<-2ex>@{}[u]^(-.5){\alpha_{n}} }}
\newcommand{\smtrx}[1]{\left (\begin{smallmatrix}#1\end{smallmatrix}\right)}
\author{Brian Collier}
\address{Department of Mathematics \\
University of Maryland \\
4176 Campus Drive \\
College Park, MD 20742}
\email{bcollie2@math.umd.edu}
\title[Various generalizations of $\sP\sSL(2,\R)$-Higgs bundles]{Various generalizations and deformations of $\sP\sSL(2,\R)$ surface group representations and their Higgs bundles}
\date{\today}
\begin{document}

\setlength{\smallskipamount}{6pt}
\setlength{\medskipamount}{10pt}
\setlength{\bigskipamount}{16pt}
\begin{abstract}
Recall that the group $\sP\sSL(2,\R)$ is isomorphic to $\sP\sSp(2,\R),\ \sSO_0(1,2)$ and $\sP\sU(1,1).$ The goal of this paper is to examine the various ways in which Fuchsian representations of the fundamental group of a closed surface of genus $g$ into $\sP\sSL(2,\R)$ and their associated Higgs bundles generalize to the higher rank groups $\sP\sSL(n,\R),\ \sP\sSp(2n,\R),\ \sSO_0(2,n),\ \sSO_0(n,n+1)$ and $\sP\sU(n,n)$. For the $\sSO_0(n,n+1)$-character variety, we parameterize $n(2g-2)$ new connected components as the total space of vector bundles over appropriate symmetric powers of the surface and study how these components deform in the $\sSO_0(n,n+2)$-character variety. This generalizes results of Hitchin for $\sP\sSL(2,\R)$.
\end{abstract}
\maketitle
\section{Introduction}

Since Hitchin introduced Higgs bundles, they have been effectively used to count connected components of the character variety of surface group representations in a reductive Lie group. 
Even more, when one is lucky, Higgs bundles can be used to explicitly parameterize certain connected components of the character variety.  
For a closed surface $S$ with genus $g\geq2$, Hitchin \cite{selfduality} gave an explicit parameterization of all but one of the connected components of the character variety of conjugacy classes of representations of the fundamental group of $S$ in the Lie group $\sP\sSL(2,\R).$ 
Namely, he showed that each component with {\em nonzero} Euler class is diffeomorphic to the total space of a smooth vector bundle over an appropriate symmetric product of the surface. When the Euler class obtains its maximal value, this recovers the classical parameterization of the set of {\em Fuchsian representations} (which is identified with the Teichm\"uller space of $S$) as a vector space of complex dimension $3g-3.$  
The component with zero Euler class is the only component which contains representations with compact Zariski closure.

Hitchin later showed that the $\sP\sSL(n, \R)$-character variety has three connected components\footnote{More precisely, the character variety of $\sP\sSL(2n,\R)$ has six components which come in three isomorphic pairs. These pairs are identified via the action of the outer automorphism group of $\sP\sSL(2n,\R).$}, two in which all representations can be deformed to compact representations and one in which {\em no} representation can be deformed to a compact representation \cite{liegroupsteichmuller}. Moreover, this last component, now called the Hitchin component, can be interpreted as a deformation space of Fuchsian representations, and thus, generalizes the Teichm\"uller space of $S$ to the $\sP\sSL(n,\R)$-character variety.
In fact, Hitchin parameterized the Hitchin component by a vector space of holomorphic differentials on the surface $S$ equipped with a Riemann surface structure. 

The focus of this paper is to describe other generalizations of surface group representations into $\sP\sSL(2,\R)$ and their associated Higgs bundles by using the low dimensional isomorphisms:
\[\sP\sSL(2,\R)\cong\sP\sSp(2,\R)\cong\sSO_0(2,1)\cong\sP\sU(1,1)~.\]
Namely, we will consider groups locally isomorphic to
\[\xymatrix{\sP\sSL(n,\R)~,&\sP\sSp(2n,\R)~,&\sSO_0(2,n)~,&\sSO_0(n,n+1)&\text{and}&\sP\sU(n,n)}~.\]
All of these families of groups are {\em split} real groups ($\sP\sSL(n,\R),~ \sP\sSp(2n,\R),~ \sSO_0(n,n+1)$) or groups of {\em Hermitian type} $(\sP\sSp(2n,\R),~\sSO_0(2,n),~\sP\sU(n,n))$. 
Hitchin's description of the $\sP\sSL(n,\R)$-Hitchin component can be adapted to any split real group. 
Thus, for each of the split groups, there is a special connected component, also called the Hitchin component, which can be thought of as a deformation space of Fuchsian representations. 
In the Hermitian case, there is an integer invariant called the Toledo invariant which generalizes the Euler class. This invariant again is bounded in absolute value, and the representations with {\em maximal Toledo invariant} are of particular interest.  
Indeed, Hitchin representations and maximal representations are the only known connected components of surface group character varieties which consist entirely of representations that satisfy Labourie's Anosov condition \cite{AnosovFlowsLabourie,MaxRepsAnosov}.

For the group $\sP\sU(n,n),$ the space of maximal representations in connected \cite{MarkmanXia}. Moreover, all maximal representations lift to $\sSU(n,n)$-representations, and the space of maximal $\sSp(2n,\R)$-representations is a subset of the space of maximal $\sSU(n,n)$-representations. We will not focus on maximal $\sP\sU(n,n)$-representations, but will discuss the case of maximal $\sSp(2n,\R)$-representations.

Maximal representations into $\sSp(2n,\R)$ have been studied by many authors from various perspectives. For $n\geq3,$ the space maximal $\sSp(2n,\R)$-representations all behave in a similar manner. Namely, there are $3\cdot 2^{2g}$ connected components of maximal representations \cite{HiggsbundlesSP2nR} and every component can be interpreted as a deformation space of Fuchsian representations \cite{TopInvariantsAnosov} (see section \ref{section Hitchin maximal}). 
However, when $n=2$ the space of maximal $\sSp(4,\R)$-representations behave quite differently than the general case. In particular, there are many more connected components \cite{sp4GothenConnComp}, and there are connected components which are smooth and consist entirely of Zariski dense representations \cite{MaximalSP4}. In other words, there are connected components of maximal $\sSp(4,\R)$-representations which cannot be interpreted as deformation spaces of Fuchsian representations. 

We will show how the strange behavior of maximal $\sSp(4,\R)$-representations can be interpreted as a consequence of the low dimensional isomorphism $\sSO_0(2,3)\cong\sP\sSp(4,\R)$. Namely, for each integer $d\in(0,n(2g-2)]$, we will construct a connected component of the $\sSO_0(n,n+1)$-character variety which directly generalizes the exceptional components of maximal $\sSp(4,\R)$-representations (see Theorem \ref{THM SOnn+1comp}). Each of these components is parameterized by a smooth vector bundle over an appropriate symmetric product of the surface. Moreover, when the integer $d$ is maximal (i.e. $d=n(2g-2)$), this component recovers the $\sSO_0(n,n+1)$-Hitchin component. 

Note that when $n\geq3$, $\sSO_0(n,n+1)$ is {\em not} a group of Hermitian type, so these new connected components do not arise from the maximality of a known topological invariant. 
Interestingly, the special features of maximal $\sSO_0(2,3)$ representations can be viewed as a generalization of Hitchin's description of the $\sP\sSL(2,\R)$-character variety by using the isomorphism $\sP\sGL(2,\R)=\sSO(2,1)$ (see section \ref{Section PGL2}). 
Finally, we will study how these exotic connected components of the $\sSO_0(n,n+1)$-character variety deform in the $\sSO_0(n,n+2)$-character variety.
 

 Sections 2, 3, and 4 are mostly a survey of known results. Sections 5, and 6 contain new results, some of which were part of the author's thesis, and will appear in \cite{SO23LabourieConj}, \cite{SOnn+1PosRepsHiggs}, and \cite{SOpqStabilityAndMinima}.
\smallskip

 \textbf{Acknowledgments} The author's research is supported by the National Science Foundation under Award No. 1604263.

\section{Surface group representations and Higgs bundles}\label{section background}
Let $S$ be an orientable closed surface of genus $g\geq 2$ and $\sG$ be a connected real {\em (semi)simple} algebraic Lie group. 
Denote the fundamental group of $S$ by $\Gamma.$
The set of representations of $\Gamma$ in $\sG$ is defined to be the set of group homomorphisms $\Hom(\Gamma,\sG).$ 
Since $\sG$ is algebraic and $\Gamma$ has a finite presentation, $\Hom(\Gamma,\sG)$ can be given the structure of an algebraic variety.
A representation $\rho\in\Hom(\Gamma,\sG)$ is called {\em reductive} if the Zariski closure of $\rho(\Gamma)$ is a reductive subgroup of $\sG.$ Denote the space of reductive representations by $\Hom^+(\Gamma,\sG).$
\begin{Definition}
	The $\sG$-character variety $\Xx(\Gamma,\sG)$ is the space $\Xx(\Gamma,\sG)=\Hom^+(\Gamma,\sG)/\Inn(\sG)$
	where $\Inn(\sG)$ denotes the set of inner automorphism of $\sG.$ 
	\end{Definition}
The $\sG$-character variety is an algebraic variety of dimension $\mathrm{dim}(\sG)(2g-2)$ (see \cite{SymplecticNatureofFund}).

\begin{Example}\label{EX: Fuchsian reps}
The set of {\em Fuchsian representations} $\Fuch(\Gamma)\subset\Xx(\Gamma,\sP\sSL(2,\R))$ is defined to be the subset of conjugacy classes of {\em faithful} representations with {\em discrete image}.  
The space $\Fuch(\Gamma)$ defines two isomorphic connected components of $\Xx(\Gamma,\sP\sSL(2,\R))$ \cite{TopologicalComponents} and is in one to one correspondence with the Teichm\"uller space of isotopy classes of marked Riemann surface structures on the orientable surface $S.$

\end{Example}
Recall that the universal cover $\widetilde S\ra S$ is a principal $\Gamma$-bundle. Thus, associated to a representation $\rho:\Gamma\ra\sG,$ there is a flat principal $\sG$ bundle $\widetilde S\times_\rho\sG$ on $S.$ 
For connected groups, topological $\sG$-bundles on $S$ are classified by a characteristic class 
\[\omega\in H^2(S,\pi_1\sG)\cong\pi_1\sH\] where $\sH\subset \sG$ is a maximal compact subgroup. 
Thus, the $\sG$-character variety decomposes as 
\[\Xx(\Gamma,\sG)=\bigsqcup\limits_{\omega\in\pi_1\sH}\Xx^\omega(\Gamma,\sG)\]
where the equivalence class of a reductive representation $\rho:\Gamma\ra\sG$ lies in $\Xx^\omega(\Gamma,\sG)$ if and only if the flat $\sG$ bundle determined by $\rho$ has topological type determined\footnote{For nonconnected groups, this is not true. In the nonconnected case, there are two characteristic classes, one in $H^1(S,\pi_0(\sH))$ and one in $H^2(S,\pi_1\sH))$. As we will see for $\sG=\sSO(2,1)$, the topological type of a $\sG$-bundle is not always uniquely determined by these invariants. See \cite{AndrePGLnR} and \cite{Oliveira_GarciaPrada_2016} for more details on nonconnected groups.} by $\omega\in\pi_1\sH.$

In general, the number of connected components of the character variety of a simple Lie group $\sG$ has not been established. However, there have been many partial results. For instance, when $\sG$ is compact and semisimple, the spaces $\Xx^\omega(\Gamma,\sG)$ are connected and nonempty \cite{ramanathan_1975}. This implies the following proposition.
\begin{Proposition}
	If $\sG$ is a connected real semisimple Lie group such that the maximal compact subgroup $\sH$ is semisimple, then, for each $\omega\in\pi_1\sH,$ the space $\Xx^{\omega}(\sG)$ is nonempty. Moreover, each component $\Xx^\omega(\Gamma,\sG)$ contains a unique connected component with the property that every representation in it can be continuously deformed to a representation with compact Zariski closure. 
\end{Proposition}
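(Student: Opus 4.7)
The plan is to reduce the statement to Ramanathan's theorem quoted in the paper via the homotopy equivalence between $\sG$ and its maximal compact subgroup. By the Iwasawa decomposition, the inclusion $\iota:\sH\hookrightarrow\sG$ is a homotopy equivalence, so it induces an isomorphism $\pi_1\sH\xrightarrow{\sim}\pi_1\sG$. By naturality of characteristic classes, extending the structure group of a principal $\sH$-bundle to $\sG$ preserves the topological invariant $\omega$. Consequently, post-composition with $\iota$ yields a well-defined continuous map $\iota_*:\Xx^\omega(\Gamma,\sH)\to\Xx^\omega(\Gamma,\sG)$ of character varieties of the same topological type.

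For nonemptiness, apply the cited theorem of Ramanathan to the compact semisimple group $\sH$: the space $\Xx^\omega(\Gamma,\sH)$ is nonempty and connected. For any $[\rho]\in\Xx^\omega(\Gamma,\sH)$, the composite $\iota\circ\rho:\Gamma\to\sG$ has image in the compact subgroup $\sH$, so its Zariski closure in $\sG$ is compact, hence reductive; therefore $[\iota\circ\rho]\in\Xx^\omega(\Gamma,\sG)$.

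For the uniqueness of the compact-deformable component, let $\Kk^\omega\subset\Xx^\omega(\Gamma,\sG)$ denote the set of classes represented by a representation whose Zariski closure in $\sG$ is compact. Since every compact subgroup of $\sG$ lies in some maximal compact subgroup and all maximal compacts of $\sG$ are $\sG$-conjugate, any such representation is conjugate into $\sH$; hence $\Kk^\omega=\iota_*\bigl(\Xx^\omega(\Gamma,\sH)\bigr)$. By Ramanathan, $\Xx^\omega(\Gamma,\sH)$ is connected, so its continuous image $\Kk^\omega$ is a connected subset of $\Xx^\omega(\Gamma,\sG)$ and therefore lies inside a single connected component $\Xx^\omega_0\subset\Xx^\omega(\Gamma,\sG)$. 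Any class of $\Xx^\omega(\Gamma,\sG)$ continuously deformable to a class in $\Kk^\omega$ lies in the same connected component as its limit, hence in $\Xx^\omega_0$; conversely, $\Xx^\omega_0$ intersects $\Kk^\omega$ nontrivially, so every class in $\Xx^\omega_0$ is (trivially) deformable to a compact-Zariski-closure representation. This identifies $\Xx^\omega_0$ as the unique such component.

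The main (though mild) technical point to justify carefully is the naturality used in the first paragraph: that the characteristic class in $H^2(S,\pi_1\sH)$ of the flat $\sH$-bundle associated to $\rho$ corresponds, under the isomorphism induced by $\iota$, to the characteristic class in $H^2(S,\pi_1\sG)$ of the flat $\sG$-bundle associated to $\iota\circ\rho$. This is standard classifying-space theory since $B\iota:B\sH\to B\sG$ is a homotopy equivalence, but it is the crucial fact that turns $\iota_*$ into a map respecting the decomposition of character varieties by topological type, and hence underlies both the nonemptiness and the uniqueness arguments above.
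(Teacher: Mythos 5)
Your proof is correct and is precisely the deduction the paper intends: the paper gives no separate argument, simply asserting that the proposition follows from Ramanathan's theorem on the nonemptiness and connectedness of $\Xx^\omega(\Gamma,\sH)$ for compact semisimple $\sH$, which is exactly what you spell out via the homotopy equivalence $\sH\hookrightarrow\sG$, the conjugacy of maximal compacts, and the resulting identification of the compact locus with $\iota_*\bigl(\Xx^\omega(\Gamma,\sH)\bigr)$. The only point left implicit in your last step is that connected components of the (semialgebraic) character variety are path-connected, so lying in the same component as a compact class does yield an actual continuous deformation to it.
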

The above proposition implies that, when $\sG$ is a connected semisimple {\em complex} Lie group, the space $\Xx^\omega(\Gamma,\sG)$ is nonempty for each $\omega\in\pi_1\sH$. In fact, Li proved that for complex semisimple Lie groups, each of the spaces $\Xx^\omega(\Gamma,\sG)$ is connected \cite{JunLiConnectedness}. In particular, we have the following:
\begin{Corollary}
     	If $\sG$ is a semisimple complex Lie group, then any representation $\rho\in\Xx(\Gamma,\sG)$ can be continuously deformed to a representation with compact Zariski closure. 
     \end{Corollary}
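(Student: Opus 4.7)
The plan is to derive the corollary as a direct combination of the preceding Proposition with Li's connectedness result \cite{JunLiConnectedness}, so the argument is essentially structural rather than technical.

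First I would verify the hypothesis of the Proposition. Since $\sG$ is a connected complex semisimple Lie group, any maximal compact subgroup $\sH \subset \sG$ is a compact real form of $\sG$, and hence $\sH$ is itself semisimple. In particular, the Proposition applies: for each $\omega \in \pi_1 \sH$, the component $\Xx^\omega(\Gamma, \sG)$ is nonempty and contains a distinguished connected component $\Xx^\omega_c(\Gamma, \sG) \subset \Xx^\omega(\Gamma, \sG)$ with the property that every representation in it can be continuously deformed to one with compact Zariski closure.

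Next I would invoke Li's theorem, quoted immediately above the corollary, which asserts that for $\sG$ connected complex semisimple, each $\Xx^\omega(\Gamma, \sG)$ is connected. Combining this with the previous step, the distinguished component $\Xx^\omega_c(\Gamma, \sG)$ coincides with the whole $\Xx^\omega(\Gamma, \sG)$, and so every class $[\rho] \in \Xx^\omega(\Gamma, \sG)$ admits a continuous path to a representation with compact Zariski closure. Since the full character variety decomposes as
\[
\Xx(\Gamma, \sG) \;=\; \bigsqcup_{\omega \in \pi_1 \sH} \Xx^\omega(\Gamma, \sG),
\]
the conclusion follows for arbitrary $\rho \in \Xx(\Gamma, \sG)$.

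The proof is really a two-line deduction, so there is no substantive obstacle: the only subtlety to flag is the verification that the maximal compact of a complex semisimple group is semisimple (so that the Proposition is genuinely applicable), and the observation that once each topological sector is known to be connected, the deformation property from the Proposition propagates from its distinguished component to the entire sector. No Higgs bundle machinery needs to be reintroduced at this stage, as both ingredients have already been cited.
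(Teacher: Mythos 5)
Your argument is exactly the deduction the paper intends: the maximal compact of a complex semisimple group is a compact real form and hence semisimple, so the Proposition applies, and Li's connectedness of each $\Xx^\omega(\Gamma,\sG)$ forces the distinguished component to be all of $\Xx^\omega(\Gamma,\sG)$. This matches the paper's (implicit) proof, and your explicit check of the hypothesis is a welcome addition.
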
     

A semisimple Lie group $\sG$ whose maximal compact subgroup is not semisimple but only reductive is called {\em a group of Hermitian type.} 
When $\sG$ is simple and of Hermitian type, the center of the maximal compact subgroup has dimension one and defines a subgroup of $\pi_1\sH$ which is isomorphic to $\Z.$ 
For example, $\sSp(2n,\R)$ is a group of Hermitian type since the maximal compact subgroup of $\sSp(2n,\R)$ is $\sU(n)$ and $\pi_1\sU(n)\cong\Z.$ In the Hermitian case, the character variety decomposes as 
 	\[\Xx(\Gamma,\sG)=\bigsqcup\limits_{\tau\in\Z}\Xx^\tau(\sG).\]
Moreover, the spaces $\Xx^\tau(\sG)$ and $\Xx^{-\tau}(\sG)$ are isomorphic and nonempty for only {\em finitely} many values of $\tau\in\Z$. 
Let $M$ be the largest value of $\tau$ such that $\Xx^\tau(\sG)$ is nonempty. The set of representations in $\Xx^M(\sG)$ is called the set of {\em maximal representations}. The value of $M$ depends only on the real rank of the group $\sG$, the topology of $S$ and a choice of normalization.
\begin{Example}
	For $\sG=\sP\sSL(2,\R),$ we have $\pi_1\sH=\pi_1\sSO(2)=\Z.$ Thus, 
	\[\Xx(\Gamma,\sP\sSL(2,\R))=\bigsqcup\limits_{\tau\in\Z}\Xx^\tau(\Gamma,\sP\sSL(2,\R)).\]
	In this case, the invariant $\tau$ is the Euler class of the circle bundle associated to a flat $\sP\sSL(2,\R)$-bundle. This Euler class satisfies the Milnor-Wood inequality \cite{MilnorMWinequality}: 
	\[|\tau|\leq2g-2.\]
	Moreover, the maximal components $\Xx^{2g-2}(\Gamma,\sP\sSL(2,\R))\sqcup\Xx^{-2g+2}(\Gamma,\sP\sSL(2,\R))$ correspond to the set of Fuchsian representations from Example \ref{EX: Fuchsian reps} \cite{TopologicalComponents}. 

 \end{Example}
\subsection{$\sG$-Higgs bundles}
Unlike the character variety, to describe Higgs bundles we must fix a Riemann surface structure $X$ on the topological surface $S.$
As before, let $\sG$ be a real algebraic simple Lie group with Lie algebra $\fg,$ and fix $\sH\subset\sG$ a maximal compact subgroup with Lie algebra $\fh$. 
Let $\fg=\fh\oplus\fm$ be the corresponding Cartan decomposition of the Lie algebra $\fg.$ 
The splitting $\fg=\fh\oplus\fm$ is invariant with respect to the adjoint action of $\sH$ on $\fg.$
Complexifying everything gives an $Ad_{\sH_\C}$ invariant decomposition $\fg_\C=\fh_\C\oplus\fm_\C.$
If $P$ is a principal $\sG$-bundle and $\alpha:\sG\ra\sGL(V)$ is a linear representation, denote the associated vector bundle $P\times_{\sG} V$ by $P[V].$

\begin{Definition}
	A {\em $\sG$-Higgs bundle} is a pair $(\Pp,\varphi)$ where $\Pp$ is a holomorphic principal $\sH_\C$-bundle and $\varphi\in H^0(X,\Pp[\fm_\C]\otimes K)$ is a holomorphic section of the associated $\fm_\C$-bundle twisted by the canonical bundle $K$ of $X.$
\end{Definition}

\begin{Example}
If $\sG$ is compact, then $\sH_\C=\sG_\C$ and $\fm_\C=\{0\}.$ Thus for compact groups, a Higgs bundle is the same as a holomorphic principal $\sG_\C$-bundle. 
\end{Example}

If $\alpha:\sH_\C\ra\sGL(V)$ is a linear representation of $\sH_\C$, then the data of a $\sG$-Higgs bundle can be described by the vector bundle associated to $\alpha$ and a section of another associated bundle. For instance, when $\sG=\sSL(n,\C)$ and $\alpha:\sSL(n,\C)\ra\sGL(\C^n)$ is the standard representation, an $\sSL(n,\C)$-Higgs bundle $(\Pp,\varphi)$ defines the data of a rank $n$ holomorphic vector bundle $E\ra X$ with $\Lambda^n(E)=\Oo$ and a {\em traceless} holomorphic section $\Phi$ of $\End(E)\otimes K$.
This allows us to define the notion of stability and polystability of an $\sSL(n,\C)$-Higgs bundle.
\begin{Definition}
	An $\sSL(n,\C)$-Higgs bundle $(E,\Phi)$ is {\em stable} if, for all subbundles $F\subset E$ with $\Phi(F)\subset F\otimes K$, we have $deg(F)<0;$ $(E,\Phi)$ is {\em polystable} if it is a direct sum of stable $\sGL(n_j,\C)$-Higgs bundles of degree zero. 
 \end{Definition}

There are appropriate notions of (semi)stability and polystability for $\sG$-Higgs bundle with which the moduli space of $\sG$-Higgs bundles can be defined as a polystable quotient. 
Rather than recalling the definition of polystability for $\sG$-Higgs bundles, we will use the following result (see \cite{HiggsPairsSTABILITY}). 

\begin{Proposition}
	Let $\sG$ be a real form of a complex subgroup of $\sSL(n,\C)$. A $\sG$-Higgs bundle $(\Pp,\varphi)$ is polystable if and only if the associated $\sSL(n,\C)$-Higgs bundle is polystable. 
\end{Proposition}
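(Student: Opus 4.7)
The plan is to establish both directions through the Hitchin--Kobayashi correspondence, which in both the real and the complex setting identifies polystability with the existence of a harmonic reduction of structure group solving the Hitchin equations. For a $\sG$-Higgs bundle $(\Pp,\varphi)$, this means a reduction of $\Pp$ from $\sH_\C$ to the maximal compact $\sH$ whose Chern connection $A_h$ together with $\varphi$ satisfies
\[F_{A_h} - [\varphi, \tau_h(\varphi)] = 0,\]
where $\tau_h$ is the compact involution of $\fg_\C$ determined by $h$. For an $\sSL(n,\C)$-Higgs bundle this specializes to the classical Hitchin equation of Hitchin--Simpson--Corlette on $(E,\Phi)$. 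With these two correspondences in hand, the proposition reduces to showing that harmonic reductions match up under the embedding $\sG\hookrightarrow\sSL(n,\C)$.

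First I would handle the forward direction. Suppose $(\Pp,\varphi)$ is polystable as a $\sG$-Higgs bundle and let $h$ be a harmonic reduction to $\sH$. Since the embedding $\sG\hookrightarrow\sSL(n,\C)$ sends $\sH$ into $\sSU(n)$, the reduction $h$ determines a Hermitian metric $h'$ on $E=\Pp[\C^n]$ together with the compatible SU$(n)$-reduction. The Cartan decomposition $\fg=\fh\oplus\fm$ is the restriction of the Cartan decomposition $\fsl(n,\C)=\fsu(n)\oplus i\fsu(n)$, and both the curvature functional and the bracket $[\cdot,\tau_h(\cdot)]$ are natural with respect to the equivariant inclusion $\fg_\C\hookrightarrow\fsl(n,\C)$. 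Hence the $\sG$-Hitchin equation for $(\Pp,\varphi,h)$ implies the usual Hitchin equation for $(E,\Phi,h')$, so $(E,\Phi)$ is polystable.

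The reverse direction is the subtle one. Suppose $(E,\Phi)$ is polystable as an $\sSL(n,\C)$-Higgs bundle and pick a harmonic metric $h'$. I must produce a harmonic metric on $(\Pp,\varphi)$ compatible with the $\sG$-structure. The key input is that the real form $\sG\subset\sSL(n,\C)$ is the fixed locus of an antiholomorphic involution $\sigma$, and this $\sigma$ acts on the space of Hermitian metrics on $E$ while preserving the harmonic condition and the Higgs data $(E,\Phi)$ coming from the $\sG$-bundle. Since the harmonic metric on a polystable Higgs bundle is unique up to the action of the reductive automorphism group of the polystable decomposition, the metric $\sigma(h')$ is gauge equivalent to $h'$, and one can average over the compact part of this automorphism group to obtain a $\sigma$-invariant harmonic metric. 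A $\sigma$-invariant Hermitian metric on $E=\Pp[\C^n]$ is precisely a reduction of $\Pp$ to the maximal compact $\sH$ of $\sG$, and by naturality this reduction solves the $\sG$-Hitchin equation. Hence $(\Pp,\varphi)$ is polystable.

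The main obstacle will be the averaging step in the reverse direction. In the strictly polystable case the harmonic metric is only unique modulo a nontrivial reductive automorphism group, so one must verify that the compact-group average of $h'$ and $\sigma(h')$ remains a harmonic metric and actually descends to an $\sH$-reduction rather than to a reduction to some larger subgroup of $\sH_\C$. This bookkeeping is carried out in the general framework of García-Prada--Gothen--Mundet i Riera \cite{HiggsPairsSTABILITY} cited in the excerpt, where both Hitchin--Kobayashi correspondences and the necessary equivariance properties are made precise; my proof would appeal to their framework for these technical points rather than redo the analysis from scratch.
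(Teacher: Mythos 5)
The paper does not actually prove this proposition: it states it as a known result and points to \cite{HiggsPairsSTABILITY}, precisely the reference your own sketch falls back on for the technical steps. So there is no in-paper argument to measure you against; what I can assess is whether your outline of the standard proof is sound. It is, in broad strokes: reducing both notions of polystability to the existence of solutions of the respective Hitchin equations, observing that the easy direction is the naturality of the equations under $\sH\hookrightarrow\sSU(n)$, and isolating the converse --- producing an $\sH$-compatible harmonic metric from an arbitrary one on $(E,\Phi)$ --- as the genuine content. That is the correct architecture, and it is the one carried out in the cited reference.

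Two points in your converse direction need tightening before this would stand as a proof. First, the involution that acts on the space of Hermitian metrics of $E$ is not the antiholomorphic conjugation $\sigma$ cutting out the real form $\sG\subset\sSL(n,\C)$; that map does not act on metrics of the holomorphic bundle $E$ in any direct way. The relevant involution is built from the \emph{holomorphic} data that the reduction to $\sH_\C$ and the extension $\theta:\fg_\C\to\fg_\C$ of the Cartan involution impose on $(E,\Phi)$ --- concretely, the extra holomorphic tensors (orthogonal or symplectic forms, direct-sum decompositions $E=V\oplus V^*$ or $V\oplus W$, etc.) that appear in the paper's vector-bundle descriptions. One must check that this involution preserves harmonicity and that its fixed metrics are exactly those induced from $\sH$-reductions of $\Pp$; the latter is where the hypothesis that $\sH=\sG\cap\sSU(n)$ after a suitable conjugation enters, and it is not automatic from ``$\sigma$-invariance.'' Second, ``averaging over the compact part of the automorphism group'' is not literally available: the space of harmonic metrics on a polystable $(E,\Phi)$ is a nonpositively curved symmetric space for $\Aut(E,\Phi)$, and the fixed point you need comes from the Cartan fixed-point argument (for an isometric involution, the midpoint of the geodesic joining $h'$ to its image is fixed), not from a linear average. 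With those two substitutions your sketch matches the argument in \cite{HiggsPairsSTABILITY}; as written, both steps are stated in a form that would not quite go through.
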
 

Recall that holomorphic structures on a smooth principal $\sH_\C$-bundle $P\ra X$\ are equivalent to Dolbeault operators $\bar\p_P\in \Omega^{0,1}(X,P[\fh_\C])$.  
The gauge group $\Gg_{\sH_\C}(P)$ of smooth bundle automorphisms of $P$ acts on the set of Higgs bundle structures $(\Pp,\varphi)=(\bar\p_P,\varphi)$ by the adjoint action.

\begin{Definition} 
	The {\em moduli space of $\sG$-Higgs bundles} $\Mm(\sG)$ on $X$ is defined as the set of isomorphism class of polystable $\sG$-Higgs bundles.
\end{Definition}
In fact, the space $\Mm(\sG)$ can be given the structure of a complex analytic variety of expected dimension $dim(\sG)(2g-2)$ \cite{selfduality,localsystems,schmitt_2005}. As with the character variety, for connected groups, the topological type of the $\sH_\C$ bundle of a Higgs bundle $(\Pp,\varphi)$ is determined by a class $\omega\in\pi_1\sH.$ If $\Mm^\omega(\sG)$ denotes the set of $\sG$-Higgs bundles with topological invariant $\omega\in\pi_1\sH$, then the moduli space $\Mm(\sG)$ decomposes as 
\[\Mm(\sG)=\bigsqcup\limits_{\omega\in\pi_1\sH}\Mm^{\omega}(\sG)~.\]
Moreover, we have the following fundamental result which we will use to go back and forth between statements about the Higgs bundle moduli space and the character variety.
\begin{Theorem}\label{THM: NAHC}
	Let $X$ be a Riemann surface with genus at least two and fundamental group $\Gamma.$ Let $\sG$ be a real simple Lie group with maximal compact subgroup $\sH.$ The moduli space $\Mm(\sG)$ of $\sG$-Higgs bundles on $X$ is homeomorphic to the $\sG$-character variety $\Xx(\Gamma,\sG)$. Moreover, for each $\omega\in\pi_1\sH$, the components $\Mm^\omega(\sG)$ and $\Xx^\omega(\Gamma,\sG)$ are homeomorphic.
\end{Theorem}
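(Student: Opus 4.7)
The plan is to establish the non-abelian Hodge correspondence by constructing inverse maps between polystable $\sG$-Higgs bundles and reductive representations, each built out of the existence and uniqueness of a harmonic object, and then verify that topological invariants are preserved.

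First I would go from Higgs bundles to representations. Given a polystable $\sG$-Higgs bundle $(\Pp,\varphi)$, the key analytic input is the Hitchin–Kobayashi correspondence in this setting: $(\Pp,\varphi)$ is polystable if and only if it admits a harmonic metric, i.e.\ a reduction of structure group $h$ from $\sH_\C$ to $\sH$ solving the Hitchin equation
\[
F_{A_h}+[\varphi,\tau_h(\varphi)]=0,
\]
where $A_h$ is the Chern connection of $h$ and $\tau_h:\Pp[\fm_\C]\to\Pp[\fm_\C]$ is the antilinear involution induced by $h$. This existence result is due to Hitchin for $\sG=\sSL(2,\C)$, Simpson for complex groups, and García-Prada–Gothen–Mundet in the generality needed here (using the stability notion invoked in the earlier proposition). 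Setting $D = A_h+\varphi+\tau_h(\varphi)$ yields a flat $\sG$-connection on $\Pp[\sG]$, and its holonomy defines a conjugacy class of reductive representations $\rho:\Gamma\to\sG$. One checks (by construction) that the class depends only on the gauge equivalence class of $(\Pp,\varphi)$, giving a well-defined map $\Psi:\Mm(\sG)\to\Xx(\Gamma,\sG)$.

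Next I would construct the inverse using Corlette–Donaldson. Given a reductive representation $\rho:\Gamma\to\sG$, there exists a $\rho$-equivariant harmonic map $f:\wt{X}\to\sG/\sH$, unique up to the centralizer of $\rho(\Gamma)$. Pulling back yields a reduction of the flat $\sG$-bundle $\wt{X}\times_\rho\sG$ to $\sH$; combined with the fixed complex structure on $X$, the $(0,1)$-part of the induced connection defines a holomorphic $\sH_\C$-bundle $\Pp$, and the $(1,0)$-part of $df$ (viewed as a section of $\Pp[\fm_\C]\otimes K$) defines $\varphi$. The harmonicity of $f$ is exactly the statement that $\bar\partial_\Pp\varphi=0$, so $(\Pp,\varphi)$ is a $\sG$-Higgs bundle; polystability follows from reductivity of $\rho$ via the moment-map interpretation. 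This produces $\Phi:\Xx(\Gamma,\sG)\to\Mm(\sG)$, and the two constructions are mutually inverse because the harmonic metric underlying $\Psi(\Pp,\varphi)$ is precisely the reduction pulled back from the harmonic map associated to $\Psi(\Pp,\varphi)$.

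To upgrade the bijection $\Psi$ to a homeomorphism, I would appeal to the fact that both $\Mm(\sG)$ and $\Xx(\Gamma,\sG)$ carry natural Hausdorff topologies (the gauge-theoretic and algebro-geometric quotient topologies respectively), and continuity of $\Psi$ and $\Phi$ follows from the continuous dependence of the harmonic metric/harmonic map on the data, which is a standard consequence of the implicit function theorem applied to the elliptic operators $F+[\varphi,\tau\varphi]$ and the tension field, together with a priori energy estimates and removable-singularity arguments at the strata of strictly polystable objects. Finally, I would match topological invariants component by component: the flat $\sG$-bundle associated to $(\Pp,\varphi)$ has underlying smooth $\sH$-bundle obtained by reducing $\Pp$ via $h$, so its characteristic class in $H^2(S,\pi_1\sH)\cong\pi_1\sH$ agrees with the topological type of $\Pp$ itself; hence $\Psi(\Mm^\omega(\sG))=\Xx^\omega(\Gamma,\sG)$ for every $\omega$.

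The genuinely hard part is the analytic step behind Corlette–Donaldson and the Hitchin–Kobayashi correspondence: producing the harmonic metric/map requires non-trivial geometric analysis (energy minimization in the non-positively curved target $\sG/\sH$, or heat-flow arguments, together with the correct polystability notion to rule out escape to infinity along one-parameter subgroups of $\sH_\C$). Matching the stability condition on $(\Pp,\varphi)$ with reductivity of $\rho$ in the real (non-complex) case requires the careful framework of \cite{HiggsPairsSTABILITY} cited earlier, and is where most of the real technical work lives.
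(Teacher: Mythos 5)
Your outline is correct and is essentially the standard proof of the non-abelian Hodge correspondence: the paper itself does not prove Theorem \ref{THM: NAHC} but only cites Hitchin, Donaldson, Simpson, Corlette, and the real reductive case of Garc\'ia-Prada--Gothen--Mundet, and your two-directional argument (Hitchin--Kobayashi for polystable Higgs bundles, Corlette--Donaldson harmonic maps for reductive representations, plus matching of the topological class of the underlying $\sH$-bundle) is exactly the argument those references carry out. Nothing further is needed beyond the acknowledged analytic heavy lifting, which you correctly locate in the existence theorems for harmonic metrics and equivariant harmonic maps.
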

\begin{Remark}
When $\sG$ is compact, Theorem \ref{THM: NAHC} was proven using the theory of stable holomorphic bundles by Narasimhan and Seshedri \cite{NarasimhanSeshadri} for $\sG=\sSU(n),$ and Ramanathan \cite{ramanathan_1975} in general. For $\sG$ noncompact, it was proven Hitchin \cite{selfduality} and Donaldson \cite{harmoicmetric} for $\sG=\sSL(2,\C)$ and Simpson \cite{SimpsonVHS} and Corlette \cite{canonicalmetrics} in general using the theory of Higgs bundles. Theorem \ref{THM: NAHC} holds more generally for real reductive Lie groups \cite{HiggsPairsSTABILITY}.
We will use this correspondence to, amongst other things, study the topology of the character variety.
\end{Remark}

\subsection{Vector bundle description for $\sSL(n,\R)$, $\sSp(2n,\R)$ and $\sSO(p,q)$-Higgs bundle} We now give vector bundle definitions for certain $\sG$-Higgs bundles and describe the topological invariants.

\smallskip 
\noindent\textbf{$\sSL(n,\R)$-Higgs bundles:\ }
The maximal compact subgroup of $\sSL(n,\R)$ is isomorphic to $\sSO(n)$ and the Lie algebra $\fsl(n,\R)$ consists of traceless $(n\times n)$-matrices. Let $Q$ be a positive definite symmetric quadratic form on $\R^n$. The Cartan decomposition of $\fsl(n,\R)$ is
$\fsl(n,\R)=\fso(n)\oplus sym_0(\R^n)$
where 
\[\fso(n)=\{X\in\fsl(n,\R)\ |\ X^TQ+QX=0\}\ \ \ \ \ \ \ \text{and}\ \ \ \ \ \ \\sym_0(\R^n)=\{X\in\fsl(n,\R)\ |\ X^TQ=QX\}~.\] 
Complexifying everything gives $\fsl(n,\C)=\fso(n,\C)\oplus sym_0(\C^n).$
\begin{Definition}
  	An {\em $\sSL(n,\R)$-Higgs bundle} is pair $(E,\Phi)$ where $E$ is a rank $n$ holomorphic bundle with an orthogonal structure $Q$ such that $\Lambda^nE=\Oo$ and $\Phi\in H^0(X,\End(E)\otimes K)$ is traceless and symmetric with respect to $Q,$ i.e. $\Phi^TQ=Q\Phi.$
  \end{Definition}  
For $n>2$, $\pi_1\sSO(n)=\Z_2$ and the moduli space of $\sSL(n,\R)$-Higgs bundles decomposes as
\[\Mm(\sSL(n,\R))=\bigsqcup\limits_{\omega\in \Z_2}\Mm^\omega(\sSL(n,\R)).\]
The invariant $\omega\in\Z_2$ of an $\sSL(n,\R)$-Higgs bundle $(E,\Phi)$ is the second Steifel-Whitney class of the orthogonal bundle $E.$
\smallskip

\noindent\textbf{$\sSp(2n,\R)$-Higgs bundles:\ }Consider the symplectic form $\Omega=\smtrx{0&Id\\-Id&0}$ on $\C^{2n}.$ 
The complex symplectic group $\sSp(2n,\C)$ consists of linear transformations $g\in\sGL(2n,\C)$ such that $g^T\Omega g=\Omega.$ 
The Lie algebra $\fsp(2n,\C)$ consists of matrices $X$ such that $X^T\Omega+\Omega X=0.$ Such an $X\in\fsp(2n,\C)$ is given by $X=\smtrx{A&B\\C&-A^T}$ where $A,$ $B$ and $C$ are $n\times n$ complex matrices with $B$ and $C$ symmetric.

One way of defining the group $\sSp(2n,\R)$ is as the subgroup of $\sSp(2n,\C)$ consisting of matrices with real entries. 
However, when dealing with $\sSp(2n,\R)$-Higgs bundles it will be useful to consider $\sSp(2n,\R)$ as the fixed point set of a conjugation $\lambda$ which acts by 
$\lambda(g)=\smtrx{0&I\\I&0}\overline g\smtrx{0&I\\I&0}.$

The fixed points of the induced involution (also denoted by $\lambda$) on the Lie algebra $\fsp(2n,\C)$ gives the Lie algebra $\fsp(2n,\R)$ as the set of matrices $X=\smtrx{A&B\\C&-A^T}$ where $A$ is a $n\times n$ complex valued matrix with $A=-\overline A^T$ and $B$ is a complex valued $n\times n$ symmetric matrices with $C=\overline B$. 
Since the conjugation $\lambda$ commutes with the compact conjugation $g\ra \overline{g^{-1}}^T$ of $\sSp(2n,\C)$, the composition defines a Cartan involution $\theta$. On the Lie algebra $\fsp(2n,\R)$ the involution $\theta$ acts by 
\[\theta\left(\mtrx{A&B\\C&-A^T}\right)=\mtrx{A&-B\\-C&-A^T}.\] Thus, the Cartan decomposition is given by
\[\fsp(2n,\R)=\fh\oplus\fm=\fu(n)\oplus (sym(\R^n)\oplus sym(\R^n))\]
where $sym(\R^n)$ is the set of symmetric $n\times n$ real valued matrices. 
Complexifying this decomposition gives a decomposition of $\sH_\C=\sGL(n,\C)$-modules
\[
\fsp(2n,\C)=\fh_\C\oplus\fm_\C=\fgl(n,\C)\oplus Sym^n(V)\oplus Sym^n(V^*)\]
where $Sym^n(V)$ denotes the symmetric product of the standard representation of $\sGL(n,\C)$ on $\C^n.$
\begin{Definition}\label{DEF: Sp(2n,R) Higgs bundle}
		An {\em $\sSp(2n,\R)$-Higgs bundle} is a triple $(V,\beta,\gamma)$ where $V$ is a rank $n$ holomorphic vector bundle and $(\beta,\gamma)\in H^0(X,Sym^2(V)\otimes K)\oplus H^0(X,Sym^2(V^*)\otimes K)$.
\end{Definition}
Let $(V,\beta,\gamma)$ be an $\sSp(2n,\R)$-Higgs bundle, the holomorphic sections $\beta$ and $\gamma$ define holomorphic symmetric maps 
\[\xymatrix{\beta: V^*\to V \otimes K&\text{and}&\gamma:V \to V^*\otimes K}~.\]
The $\sSL(2n,\C)$-Higgs bundle associated to an $\sSp(2n,\R)$-Higgs bundle $(V,\beta,\gamma)$ is given by 
\[(E,\Phi)=\left(V\oplus V^*,\mtrx{0&\beta\\\gamma&0}\right).\]
The fundamental group of $\sU(n)$ is $\Z$, and the invariant $\omega\in\Z$ of an $\sSp(2n,\R)$-Higgs bundle $(V,\beta,\gamma)$ is the degree of the bundle $V.$ Moreover, polystability implies that $|deg(V)|\leq n(2g-2)$ \cite{sp4GothenConnComp}. Thus, the moduli space decomposes as 
\[\Mm(\sSp(2n,\R))=\bigsqcup\limits_{|\omega|\leq n(2g-2)}\Mm^{\omega}(\sSp(2n,\R)).\]
\smallskip

\noindent\textbf{$\sSO(p,q)$-Higgs bundles:\ }Fix $Q_p$ and $Q_q$ positive definite quadratic forms on $\R^p$ and $\R^q$ respectively and consider the signature $(p,q)$ form $Q=\smtrx{Q_p&\\&-Q_q}$ on $\R^{p+q}$. The group $\sSO(p,q)$ consists of matrices $g\in\sGL({p+q},\R)$ such that $g^TQg=Q.$ The group $\sSO(p,q)$ has two connected components, and the connected component of the identity will be denoted by $\sSO_0(p,q).$ 

The Lie algebra $\fso(p,q)$ consists of matrices $X$ such that $X^TQ+QX=0.$ 
A matrix $X\in\fso(p,q)$ decomposes as $\smtrx{A&-Q_q^{-1}B^TQ_p\\B&C}$, where $B$ is a $p\times q$ matrix, and $A$ and $C$ are respectively $p\times p$ and $q\times q$ matrices which satisfy
\[\xymatrix{A^TQ_p+Q_pA=0&\text{and}&C^TQ_q+Q_pC=0}.\] Thus, the Cartan decomposition is given by
\[\fso(p,q)=\fh\oplus\fm=(\fso(p)\oplus\fso(q))\oplus\Hom(\R^p,\R^q).\] 
Complexifying this decomposition gives a decomposition of $\sH_\C=\sS(\sO(p,\C)\times\sO(q,\C))$-modules\footnote{Note that this splitting is also preserved by $\sH_{\C,0}=\sSO(p,\C)\times\sSO(q,\C).$}
	\[\fso({p+q},\C)=\fh_\C\oplus\fm_\C=(\fso(p,\C)\oplus\fso(q,\C))\oplus\Hom(V,W)\]
where $V$ and $W$ denote the standard representations of $\sSO(p,\C)$ and $\sSO(q,\C)$ respectively. 
\begin{Definition}\label{DEF SO(p,q) Higgs bundles}
  	An $\sSO(p,q)$-{\em Higgs bundle} is a triple $(V,W,\eta)$ where $V$ and $W$ are holomorphic orthogonal bundles of rank $p$ and $q$ respectively such that $\Lambda^pV=\Lambda^qW,$ and 
  	\[\eta\in H^0(X,\Hom(V,W)\otimes K)~.\] An $\sSO_0(p,q)$-Higgs bundle is an $\sSO(p,q)$-Higgs bundle $(V,W,\eta)$ such that $\Lambda^pV=\Lambda^qW=\Oo.$
  \end{Definition}  
Let $(V,W,\eta)$ be an $\sSO(p,q)$-Higgs bundle, the orthogonal structures on $V$ and $W$ are holomorphic sections of $\Sym^2V$ and $\Sym^2W$ which define holomorphic symmetric isomorphisms 
\[\xymatrix{Q_V:V\to V^*&\text{and}&Q_W:W\to W^*}.\]
The $\sSL(p+q,\C)$-Higgs bundle associated to an $\sSO(p,q)$-Higgs bundles is given by 
\begin{equation}
	\label{EQ SL(p+q)Higgs ass SO(pq)}(E,\Phi)=\left(V\oplus W,\mtrx{0&\eta^\dagger\\\eta&0}\right)
\end{equation}
where $\eta^\dagger=-Q_q^{-1}\circ\eta\circ Q_p\in H^0(\Hom(W,V)\otimes K).$

For $2<p\leq q,$ the fundamental group of $\sSO_0(p,q)$ is $\Z_2\times\Z_2$, and the invariant $\omega\in\Z_2\times\Z_2$ of an $\sSO_0p,q)$-Higgs bundle $(V,W,\eta)$ is given by the second Stiefel-Whitney classes $(sw_2^p,sw_2^q)$ of $V$ and $W.$ Thus, the moduli space decomposes as 
\[\Mm(\sSO_0(p,q))=\bigsqcup\limits_{(sw_2^p, sw_2^q)}\Mm^{sw_2^p,sw_2^q}(\sSO_0(p,q)).\]
For a $\sSO(p,q)$-Higgs bundle $(V,W,\eta)$, the first Stiefel-Whitney class of $\Lambda^pV=\Lambda^qW$ defines another invariant and  
$\Mm(\sSO(p,q))=\bigsqcup\limits_{(sw_1,sw_2^p, sw_2^q)}\Mm_{sw_1}^{sw_2^p,sw_2^q}(\sSO(p,q)).$

\section{The $\sP\sGL(2,\R)=\sSO(1,2)$-character variety}\label{Section PGL2}For $\sSO(1,2),$ we can explicitly describe the Higgs bundle moduli space. Moreover, in this case, the connected component description is deduced from topological invariants of orthogonal bundles. We will see in later sections how these descriptions generalize to higher rank generalizations of $\sSO_0(1,2)=\sP\sSL(2,\R)=\sP\sSp(2,\R).$

Using Definition \ref{DEF SO(p,q) Higgs bundles}, an $\sSO(1,2)$-Higgs bundle $(V,W,\eta)$ is given by $(\Lambda^2 W,W,\eta)$ where $W$ is a rank two holomorphic vector bundle with an orthogonal structure $Q_W$. The $\sSL(3,\C)$-Higgs bundle associated to $(\Lambda^2 W,W,\eta)$ is given by \eqref{EQ SL(p+q)Higgs ass SO(pq)} and will be represented schematically as
\begin{equation}
	\label{EQ SL3 Higgsbundle}	\xymatrix{\Lambda^2W\ar@/_.75pc/[r]_{\eta}&W\ar@/_.75pc/[l]_{\eta^\dagger}}~,
\end{equation}
where we have suppressed the twisting by $K$ from the notation.
The topological invariants of an orthogonal bundle on $X$ are a first and second Stiefel-Whitney class $(sw_1,sw_2)\in H^1(X,\Z_2)\oplus  H^2(X,\Z_2).$ 
If $\Mm^{sw_2}_{sw_1}(\sSO(1,2))$ is the moduli space of $\sSO(1,2)$-Higgs bundles consisting of triple $(\Lambda^2 W, W,\eta)$ where the first and second Stiefel-Whitney classes $W$ are $(sw_1,sw_2)$, then
\begin{equation}
	\label{EQ sw1 sw2 decomposition} \Mm(\sSO(1,2))=\bigsqcup\limits_{\substack{(sw_1,sw_2)\in\\ H^1(X,\Z_2)\oplus H^2(X,\Z_2)}}\Mm_{sw_1}^{sw_2}(\sSO(1,2)).
\end{equation}

If the first Stiefel-Whitney class of $W$ vanishes, then the structure group of $W$ reduces to $\sSO(2,\C)$. 
Since $\sSO(2,\C)\cong\C^*,$ the holomorphic orthogonal bundle $(W,Q_W)$ is isomorphic to 
\[\left(M\oplus M^{-1},\mtrx{0&1\\1&0}\right)\] where $M\in\Pic^d(X)$ is a degree $d$ holomorphic line bundle. 
In this case, the second Stiefel-Whitney class is given by the degree of $M$ mod $2,$ and the Higgs field $\eta$ decomposes as
 \[\eta=(\mu,\nu)\in H^0(M^{-1}K)\oplus H^0(MK).\] 
In this case, the associated $\sSL(3,\C)$-Higgs bundle given by
\begin{equation}
	\label{EQ SL3 Higgsbundle sw_1=0}
	\xymatrix{M\ar@/_.75pc/[r]_{\mu}&\Oo\ar@/_.75pc/[r]_{\mu}\ar@/_.75pc/[l]_{\nu}&M^{-1}\ar@/_.75pc/[l]_{\nu}}.
\end{equation} 
The following two propositions are immediate. 
\begin{Proposition}
	If $deg(M)>0$, then the $\sSO(1,2)$-Higgs bundle \eqref{EQ SL3 Higgsbundle sw_1=0} is polystable if and only if $\mu\neq0\in H^0(M^{-1}K).$ If $deg(M)<0$ then the $\sSO(1,2)$-Higgs bundle \eqref{EQ SL3 Higgsbundle sw_1=0} is polystable if and only if $\nu\neq0\in H^0(MK).$ Thus, $|deg(M)|\leq 2g-2.$
\end{Proposition}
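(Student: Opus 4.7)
The plan is to apply the proposition stated earlier, which reduces polystability of a $\sG$-Higgs bundle to polystability of the associated $\sSL(n,\C)$-Higgs bundle, and analyze the rank-three Higgs bundle $(E,\Phi)$ of \eqref{EQ SL3 Higgsbundle sw_1=0} with $E=M\oplus\Oo\oplus M^{-1}$. The key simplification is to exploit the orthogonal form $Q$ on $E$ inherited from the $\sSO(1,2)$-structure: since $\Phi$ is antisymmetric with respect to $Q$, the perpendicular $F^{\perp}$ of any $\Phi$-invariant subbundle $F\subset E$ is again $\Phi$-invariant with $\deg F^{\perp}=\deg F$ (using $\det E=\Oo$), so polystability reduces to showing that no $\Phi$-invariant line subbundle has positive degree.

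I would then classify $\Phi$-invariant line subbundles $L\subset E$. Either the projection $L\to M^{-1}$ is nonzero, which forces $\deg L\leq -\deg M$; or $L\subset M\oplus\Oo$, in which case a local section $(m,s)$ of $L$ maps to $\Phi(m,s)=(\nu s,\mu m,\mu s)$, and $\Phi$-invariance demands the $M^{-1}$-component $\mu s$ to vanish identically. This forces either $L=M$ (and then $\mu=0$ from the $\Oo$-component of $\Phi(L)$) or $L\not\subset M$ together with $\mu\equiv 0$. Consequently, when $\deg M>0$ and $\mu\neq 0$ every $\Phi$-invariant proper subbundle has strictly negative degree, giving stability; if instead $\mu=0$ then $M$ itself destabilizes. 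The case $\deg M<0$ follows by the symmetry exchanging $M\leftrightarrow M^{-1}$ and $\mu\leftrightarrow\nu$.

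The bound $|\deg M|\leq 2g-2$ is then immediate, since $\mu\neq 0\in H^0(M^{-1}K)$ (resp.\ $\nu\neq 0\in H^0(MK)$) forces $\deg M\leq 2g-2$ (resp.\ $\deg M\geq -(2g-2)$). The only mildly technical step is the passage from line subbundles to arbitrary $\Phi$-invariant subbundles, which the orthogonal duality handles cleanly; a direct enumeration of rank-two invariant subbundles via quotient line bundles would also work but would be more cumbersome.
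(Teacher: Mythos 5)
Your proof is correct. The paper offers no written argument here (it declares the proposition ``immediate''), and your verification is precisely the standard check being invoked: pass to the associated $\sSL(3,\C)$-Higgs bundle, use the $Q$-antisymmetry of $\Phi$ to reduce from rank-two invariant subbundles to invariant line subbundles of equal degree, and observe that when $\mu\neq0$ every invariant line subbundle maps nontrivially to $M^{-1}$ and so has degree at most $-\deg M$, while $\mu=0$ makes $M$ itself a destabilizing invariant subbundle (your intermediate phrase ``no invariant line subbundle has positive degree'' only gives semistability, but your actual classification establishes the strict inequality needed for stability, so nothing is missing).
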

\begin{Proposition}\label{Prop: SO(2,1) switching}
 	The $\sS(\sO(1,\C)\times\sO(2,\C))$ gauge transformation \begin{equation}\label{EQ switching iso 1,2}
	\mtrx{&&-1\\&-1&\\-1&&}:M\oplus\Oo\oplus M^{-1}\longrightarrow M^{-1}\oplus \Oo\oplus M
\end{equation}
defines an isomorphism between $(M,\mu,\nu)$ and $(M^{-1},\nu,\mu).$ Thus we may assume $deg(M)\geq0.$
 \end{Proposition}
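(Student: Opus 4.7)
The plan is to verify the proposition by direct matrix computation in three steps: confirm that the stated matrix $g$ lies in $\sS(\sO(1,\C) \times \sO(2,\C))$, compute its conjugation action on the Higgs field, and read off the resulting triple.

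First, I would fix orthogonal structures compatible with the schematic \eqref{EQ SL3 Higgsbundle sw_1=0}: the hyperbolic pairing $\smtrx{0 & 1 \\ 1 & 0}$ on $W = M \oplus M^{-1}$ and the scalar $1$ on $V = \Lambda^2 W = \Oo$. In the ordering $(M, \Oo, M^{-1})$ used by the schematic, both $M \oplus \Oo \oplus M^{-1}$ and $M^{-1} \oplus \Oo \oplus M$ carry the same antidiagonal form $Q = \smtrx{0 & 0 & 1 \\ 0 & 1 & 0 \\ 1 & 0 & 0}$. A direct calculation gives $g^T Q g = Q$, so $g \in \sO(3,\C)$. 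Moreover, $g$ preserves the splitting $V \oplus W$, acting by $-1$ on $V = \Oo$ and by $\smtrx{0 & -1 \\ -1 & 0}$ on $W = M \oplus M^{-1}$; each of these two blocks has determinant $-1$, so their product lies in $\sS(\sO(1,\C) \times \sO(2,\C))$, as required.

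Second, reading off the Higgs field from \eqref{EQ SL3 Higgsbundle sw_1=0} in the ordering $(M, \Oo, M^{-1})$ gives
\[\Phi = \mtrx{0 & \nu & 0 \\ \mu & 0 & \nu \\ 0 & \mu & 0}.\]
Since $g$ is an involution, $g^{-1} \Phi g = g \Phi g$, and a short matrix multiplication produces
\[g \Phi g = \mtrx{0 & \mu & 0 \\ \nu & 0 & \mu \\ 0 & \nu & 0}.\]
Reinterpreting this matrix in the new ordering $(M^{-1}, \Oo, M)$ with line bundle $M' = M^{-1}$, and applying the schematic conventions of \eqref{EQ SL3 Higgsbundle sw_1=0} to $M'$, identifies the components of the new Higgs field as $\mu' = \nu \in H^0((M')^{-1} K)$ and $\nu' = \mu \in H^0(M' K)$. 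Thus $g$ realizes the desired isomorphism of $\sSO(1,2)$-Higgs bundles between $(M, \mu, \nu)$ and $(M^{-1}, \nu, \mu)$.

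The final assertion is then immediate: since $deg(M^{-1}) = -deg(M)$, every isomorphism class of $\sSO(1,2)$-Higgs bundle of the form \eqref{EQ SL3 Higgsbundle sw_1=0} admits a representative with $deg(M) \geq 0$. There is no substantive obstacle here; the only subtle point is the ordering used by the schematic, which places $V = \Lambda^2 W$ in the middle position of the triple rather than at an endpoint, so that the relevant quadratic form on the underlying $\sSL(3,\C)$-Higgs bundle is the antidiagonal $Q$ rather than the more familiar block-diagonal shape.
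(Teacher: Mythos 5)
Your verification is correct: the matrix does lie in $\sS(\sO(1,\C)\times\sO(2,\C))$ (total determinant $+1$, preserving the antidiagonal form and the splitting $V\oplus W$), and the conjugation computation $g\Phi g$ correctly exchanges the roles of $\mu$ and $\nu$ for the line bundle $M^{-1}$. The paper declares this proposition immediate and gives no proof, and your direct computation is precisely the check it has in mind.
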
 
 Let $\Mm_d(\sSO(1,2))$ denote the moduli space of polystable $\sSO(1,2)$-Higgs bundles of the form \eqref{EQ SL3 Higgsbundle} with vanishing first Stiefel-Whitney class and $deg(M)=d.$ The moduli space $\Mm_{sw_1=0}(\sSO(1,2))$ decomposes as 
\begin{equation}
	\label{EQ M(SO(1,2)) decomp}
	\Mm_{sw_1=0}(\sSO(1,2))=\bigsqcup\limits_{0\leq d\leq 2g-2}\Mm_d(\sSO(1,2)).
\end{equation}
\begin{Remark}
Note that the switching isomorphism \eqref{EQ switching iso 1,2} is in the $\sS(\sO(1,\C)\times\sO(2,\C))$-gauge group but not the $\sSO(1,\C)\times\sSO(2,\C)$-gauge group. In fact, the moduli space $\Mm(\sSO_0(1,2))$ is a double cover of $\Mm(\sSO(1,2))$. The inverse image of the map $\Mm(\sSO_0(1,2))\to\Mm_d(\sSO(1,2))$ is connected when $d=0$ and consists of two isomorphic components if $d\neq0$.
\end{Remark}

Hitchin proved the following theorem for $\sP\sSL(2,\R)=\sSO_0(1,2)$.
\begin{Theorem}\label{THM: Hitchin SO(2,1)}(Theorem 10.8 \cite{selfduality}) Let $X$ be a Riemann surface of genus $g\geq2.$ For each integer $d\in (0,2g-2]$, the moduli space $\Mm_d(\sSO(1,2))$ is smooth and diffeomorphic to a rank $(d+g-1)$-vector bundle $\Ff_d$ over the $(2g-2-d)^{th}$-symmetric product $\Sym^{2g-2-d}(X).$
\end{Theorem}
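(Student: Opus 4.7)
My plan is to use the explicit description of polystable $\sSO(1,2)$-Higgs bundles from Propositions 3.3 and 3.4. Since $d>0$, every polystable representative in $\Mm_d(\sSO(1,2))$ has the form $(M,\mu,\nu)$ with $M \in \Pic^d(X)$, $\mu \in H^0(M^{-1}K) \setminus \{0\}$, and $\nu \in H^0(MK)$; because $d>0$ excludes the switching isomorphism \eqref{EQ switching iso 1,2}, the residual gauge freedom reduces to the $\C^*$-action $(\mu,\nu) \mapsto (\lambda^{-1}\mu,\,\lambda\nu)$ induced by $\mathrm{diag}(\lambda,\lambda^{-1}) \in \sSO(2,\C)$ on $M \oplus M^{-1}$, and this action is free on the locus $\mu \neq 0$.

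The first step is to define the divisor map
\[
\Pi : \Mm_d(\sSO(1,2)) \longrightarrow \Sym^{2g-2-d}(X), \qquad [(M,\mu,\nu)] \longmapsto \mathrm{div}(\mu),
\]
which is well defined because $\deg(M^{-1}K) = 2g-2-d$ and $\mathrm{div}(\mu)$ is $\C^*$-invariant. Conversely, given $D \in \Sym^{2g-2-d}(X)$, one recovers $M = K(-D)$ uniquely and $\mu$ up to $\C^*$ as the defining section $s_D$ of $\Oo(D)$. Hence $\Pi$ is surjective, and once $\mu$ is gauge-fixed the fiber is parameterized by $\nu \in H^0(MK) = H^0(K^2(-D))$. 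Serre duality gives $h^1(K^2(-D)) = h^0(\Oo(D-K)) = 0$ since $\deg(D-K) = -d < 0$, so Riemann--Roch yields $h^0(K^2(-D)) = d+g-1$, a constant independent of $D$.

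To upgrade these fiber-wise identifications to a genuine bundle, my plan is to consider the universal divisor $\Dd \subset X \times \Sym^{2g-2-d}(X)$ and form
\[
\Ff_d := \pi_{2*}\bigl(\pi_1^* K^2 \otimes \Oo(-\Dd)\bigr),
\]
which by cohomology-and-base-change (using the vanishing of $R^1\pi_{2*}$ from above) is locally free of rank $d+g-1$. The identification sends a point $(D,\nu)$ in the total space of $\Ff_d$ to the isomorphism class of the Higgs bundle $(K(-D), s_D, \nu)$. The main obstacle will be the global normalization: the universal line bundle $\Oo(\Dd)$ on $X \times \Sym^{2g-2-d}(X)$ is only canonical up to tensoring with a pullback from the base, so a naive construction of the section $s_\Dd$ has an ambiguous twist. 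One exploits the fact that the $\C^*$-weights on $\mu$ and $\nu$ are opposite, so the combined universal Higgs data is $\C^*$-invariant and descends to $\Ff_d$ independently of the chosen normalization. Smoothness of $\Mm_d(\sSO(1,2))$ is then automatic, as it is realized as the total space of this vector bundle over the smooth projective variety $\Sym^{2g-2-d}(X)$.
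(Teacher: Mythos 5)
Your proposal follows essentially the same route as the paper: describe $\Mm_d(\sSO(1,2))$ as the quotient of $\{(M,\mu,\nu):\mu\neq 0\}$ by the residual $\C^*$-gauge action, fiber it over $\Sym^{2g-2-d}(X)$ via the divisor of $\mu$, and identify each fiber with $H^0(K^2(-D))\cong\C^{d+g-1}$. Your use of the universal divisor and cohomology-and-base-change just makes precise the global bundle structure that the paper's proof leaves implicit in the phrase ``identified (non-canonically),'' so the argument is correct and matches the paper's.
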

\begin{proof}
	Let $\widetilde\Ff_d=\{(M,\mu,\nu)\ |\ M\in\Pic^d(X),\ \mu\in H^0(M^{-1}K)\setminus\{0\},\ \nu\in H^0(MK)\}$. By the above discussion, there is a surjective map $\widetilde \Ff_d\ra\Mm_d(\sSO(1,2))$ defined by sending $(M,\mu,\nu)$ to the isomorphism class of the Higgs bundle \eqref{EQ SL3 Higgsbundle}. 
	It is straight forward to check that two points $(M,\mu,\nu)$ and $(M',\mu',\nu')$ in $\widetilde\Ff_d$ lie in the same $\sS(\sO(1,\C)\times\sO(2,\C))$-gauge orbit if and only if $M'=M,$ $\mu'=\lambda\mu$ and $\nu'=\lambda^{-1}\nu$ for $\lambda\in\C^*$.
	This gives a diffeomorphism between the quotient space $\Ff_d=\widetilde\Ff_d/\C^*$ and the moduli space $\Mm_d(\sSO(1,2))$. 
	The map $\pi_d:\Ff_d\ra\Sym^{2g-2-d}(X)$ defined by taking the projective class of $\mu$ is surjective. 
	For a divisor $D\in\Sym^{2g-2-d}(X),$ the fiber $\pi^{-1}(D)$ is identified (non-canonically) with $H^0(\Oo(-D)K^2)\cong\C^{d+g-1}$ where $\Oo(-D)$ is the inverse of the line bundle associated to $D.$
\end{proof}
\begin{Remark}\label{Remark: Fuchsian Higgs bundles}
	Note that when $d$ is maximal, the moduli space $\Mm_{2g-2}(\sSO(1,2))$ is diffeomorphic to the vector space $H^0(K^2)$ of holomorphic quadratic differentials on $X.$ Indeed, in this case, $\mu\in H^0(M^{-1}K)\setminus \{0\}$, so $M=K$ and $\nu\in H^0(K^2).$ The associated connected component $\Xx_{2g-2}(\sSO(1,2))$ is the set of Fuchsian representations from Example \ref{EX: Fuchsian reps}. 
\end{Remark}

\begin{Theorem}\label{THM M0 homotopy type}
	The space $\Mm_0(\sSO(1,2))$ retracts onto $\Pic^0(X)/\Z_2$ where $\Z_2$ acts by inversion.
\end{Theorem}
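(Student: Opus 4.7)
The plan is to parametrize $\Mm_0(\sSO(1,2))$ explicitly as in the proof of Theorem~\ref{THM: Hitchin SO(2,1)}, identify $\Pic^0(X)/\Z_2$ as a natural subspace, and then deformation retract onto it by scaling the Higgs field. Every class in $\Mm_0(\sSO(1,2))$ is represented by a triple $(M,\mu,\nu)$ with $M\in\Pic^0(X)$, $\mu\in H^0(M^{-1}K)$, and $\nu\in H^0(MK)$; two such triples are gauge equivalent if and only if they are related by the $\C^*$-action $(M,\mu,\nu)\sim(M,\lambda\mu,\lambda^{-1}\nu)$ together with the switching involution $(M,\mu,\nu)\mapsto(M^{-1},\nu,\mu)$ of Proposition~\ref{Prop: SO(2,1) switching}.

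Next I would analyze polystability at $d=0$ by examining proper $\Phi$-invariant subbundles of the associated $\sSL(3,\C)$-Higgs bundle \eqref{EQ SL3 Higgsbundle sw_1=0}. The upshot is that $(M,\mu,\nu)$ is polystable in exactly two cases: either both $\mu$ and $\nu$ are nonzero, or $\mu=\nu=0$; in the latter case $(M\oplus\Oo\oplus M^{-1}, 0)$ is a direct sum of three stable degree-zero line bundles. When exactly one of $\mu,\nu$ vanishes the triple is strictly semistable but not polystable, since the resulting degree-zero $\Phi$-invariant subbundle ($M^{-1}$ or $M$) admits no $\Phi$-invariant complement. Thus the locus in $\Mm_0(\sSO(1,2))$ with $\mu=\nu=0$ is the quotient of $\Pic^0(X)$ by the switching involution, which on this degenerate stratum reduces to $M\mapsto M^{-1}$, identifying it with $\Pic^0(X)/\Z_2$.

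The retraction is then provided by the one-parameter family induced from the standard $\C^*$-action $(E,\Phi)\mapsto(E,t\Phi)$ on the Higgs bundle moduli space:
\[
H_t(M,\mu,\nu) = (M, t\mu, t\nu), \qquad t\in[0,1].
\]
Because the polystability criterion above depends only on whether $(\mu,\nu)$ is the zero pair or both components are nonzero, it is preserved under uniform scaling, so $H_t$ takes polystable triples to polystable triples for every $t\in[0,1]$. Since $H_t$ manifestly commutes with both the $\C^*$-action and the switching involution, it descends to a continuous family of maps $H_t\colon\Mm_0(\sSO(1,2))\to\Mm_0(\sSO(1,2))$ with $H_1=\mathrm{id}$, $H_0$ the projection onto $\Pic^0(X)/\Z_2$, and $H_t$ fixing $\Pic^0(X)/\Z_2$ pointwise throughout.

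The main obstacle I anticipate is the polystability analysis in the second paragraph, specifically ruling out polystability when exactly one of $\mu,\nu$ vanishes. This requires a careful check that no $\Phi$-invariant complement to the destabilizing degree-zero subbundle exists, which I would carry out by parametrizing complements of $M^{-1}$ inside $M\oplus\Oo\oplus M^{-1}$ via $\Hom(M\oplus\Oo, M^{-1})$ and showing that the condition of $\Phi$-invariance forces $\mu=0$. Once this polystable locus is correctly identified, the deformation retract itself is essentially formal, the key point being that scaling the Higgs field by $t\in[0,1]$ neither creates nor destroys the simultaneous vanishing of $\mu$ and $\nu$.
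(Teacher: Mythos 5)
Your proposal is correct and follows essentially the same route as the paper: the paper's proof also retracts via the family $(M,t\mu,t\nu)$, i.e.\ $(E,t\Phi)$, using that $E=M\oplus\Oo\oplus M^{-1}$ is polystable as a bundle so that $\lim_{t\to 0}(E,t\Phi)=(E,0)$, and then identifies the limit locus with $\Pic^0(X)/\Z_2$ via the switching gauge transformation. Your additional polystability analysis at $d=0$ (both of $\mu,\nu$ nonzero or both zero) is a correct elaboration of the point the paper compresses into the single observation that the underlying bundle is polystable.
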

\begin{proof}
	Let $(M,\mu,\nu)$ be an $\sSO(1,2)$-Higgs bundle with $deg(M)=0.$ The associated $\sSL(3,\C)$-Higgs $(E,\Phi)$ bundle is given by \eqref{EQ SL3 Higgsbundle}. 
	Note that the bundle $E=M\oplus \Oo \oplus M^{-1}$ is polystable as a holomorphic vector bundle. Thus, the one parameter family $(E,t\Phi)$ has $\lim\limits_{t\to0}(E,t\Phi)=(E,0).$ 
	In terms of the data $(M,\mu,\nu)$ this one parameter family is given by $(M,t\mu,t\nu)$. 
	The moduli space hence deformation retracts onto the space $\Pic^0(X)/\Z_2$ where $\Z_2$ acts by inversion via the gauge transformation \eqref{EQ switching iso 1,2}.
\end{proof}
\begin{Remark}
	Note that the connected components $\Mm_d(\sSO(1,2))$ can be deformed to each other in $\Mm(\sSO_0(1,3))$. Indeed, $\sSO_0(1,3)\cong\sP\sSL(2,\C)$ and $\Mm(\sP\sSL(2,\C))$ has only two connected components which are distinguished by a second Stiefel-Whitney class. In particular, $\Mm_d(\sSO(1,2))$ can be deformed to $\Mm_{d'}(\sSO(1,2))$ inside $\Mm(\sSO_0(1,3))$ if and only if $d=d'\ \text{mod}\ 2.$ 
\end{Remark}
So far we have assumed that the first Stiefel-Whitney class of the $\sO(2,\C)$-bundle $W$ is zero. 
Equivalently, we have only considered $\sSO(1,2)$-Higgs bundles which reduce to $\sSO_0(1,2)$-Higgs bundles. 
We now recall Mumford's description of holomorphic $\sO(2,\C)$-bundles \cite{MumO2Bun}. 

Let $W\ra X$ be a holomorphic rank two vector bundle equipped with an orthogonal structure $Q_W$ with nonzero first Stiefel-Whitney class $sw_1\in H^1(X,\Z_2)\setminus\{0\}.$ 
Let $\pi:X_{sw_1}\ra X$ denote the corresponding connected orientation double cover associated to $sw_1.$ Note that $\pi^*(det(W))=\Oo_{X_{sw_1}}.$ Let $\iota:X_{sw_1}\ra X_{sw_1}$ denote the covering involution, and consider the space 
\begin{equation}
	\label{Eq Prym def}
	\Prym(X_{sw_1},X)=\{M\in\Pic^0(X_{sw_1})\ |\ \iota^*M=M^{-1}\}\subset \Pic^0(X_{sw_1})
\end{equation}
\begin{Proposition}\label{Prop Mumford O2} For $sw_1\in H^1( X,\Z_2)\setminus\{0\}$ let $\pi:X_{sw_1}\ra X$ be the corresponding unramified double cover, and denote the covering involution by $\iota:X_{sw_1}\ra X_{sw_1}$.
There is a bijection between $\Prym(X_{sw_1},X)$ and holomorphic $\sO(2,\C)$-bundles on $ X$ with first Stiefel-Whitney class $sw_1.$
\end{Proposition}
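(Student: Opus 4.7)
The plan is to exhibit mutually inverse natural constructions in both directions.

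\textbf{From $\Prym$ to $\sO(2,\C)$-bundles.} Given $M \in \Prym(X_{sw_1},X)$, I would form the pushforward $W := \pi_*M$, a rank two holomorphic vector bundle on $X$. The Prym condition $\iota^*M \cong M^{-1}$ supplies a nondegenerate pairing $M \otimes \iota^*M \to \Oo_{X_{sw_1}}$, unique up to a nonzero scalar. Via the canonical isomorphism $\pi^*\pi_*M \cong M \oplus \iota^*M$, I extend this to a symmetric bilinear form on $\pi^*W$ by $((m_1,n_1),(m_2,n_2)) \mapsto \langle m_1,n_2\rangle + \langle n_1,m_2\rangle$. Because the form is $\iota$-equivariant, it descends to an orthogonal structure $Q_W$ on $W$. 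Using the splitting $\pi_*\Oo_{X_{sw_1}} \cong \Oo_X \oplus L_{sw_1}$, where $L_{sw_1}$ is the 2-torsion line bundle corresponding to the cover, a direct computation yields $\det W \cong L_{sw_1}$, so $sw_1(W) = sw_1$.

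\textbf{From $\sO(2,\C)$-bundles to $\Prym$.} Conversely, given $(W,Q_W)$ with $sw_1(W) = sw_1$, I would pull back to $X_{sw_1}$: since $\det(\pi^*W) \cong \pi^*L_{sw_1} \cong \Oo_{X_{sw_1}}$, the bundle $\pi^*W$ reduces to an $\sSO(2,\C) \cong \C^*$-bundle and splits as $\pi^*W \cong M \oplus M^{-1}$ for some isotropic line subbundle $M$. The canonical lift $\widetilde\iota$ of the covering involution to $\pi^*W$ preserves the pulled back orthogonal form and therefore permutes the two isotropic subbundles. If it stabilized each of them separately, the decomposition would descend to $X$, which would force $sw_1(W) = 0$ and contradict the hypothesis; hence $\widetilde\iota$ interchanges them, yielding $\iota^*M \cong M^{-1}$. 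A Riemann--Roch calculation using $g(X_{sw_1}) = 2g-1$ and the étaleness of $\pi$ gives $\deg M = \deg W = 0$, so $M \in \Prym(X_{sw_1},X)$.

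\textbf{Inversion and the main obstacle.} To verify that the two maps are mutually inverse, I would trace through the canonical identifications $\pi^*\pi_*M \cong M \oplus \iota^*M$ and $\pi_*\pi^*W \cong W \oplus W \otimes L_{sw_1}$, checking compatibility with the orthogonal forms and the covering involution. The main technical point concerns the $\Z_2$-ambiguity in the second construction: the two isotropic subbundles of $\pi^*W$ play symmetric roles and are swapped by $\widetilde\iota$, so a priori one recovers $M$ only up to $M \leftrightarrow \iota^*M = M^{-1}$. Matching this against the natural involution on the Prym side, via the identity $\pi \circ \iota = \pi$, and checking that the resulting assignment is well-defined on isomorphism classes of $\sO(2,\C)$-bundles is what turns the correspondence into a genuine bijection.
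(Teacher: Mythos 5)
Your construction follows the same route as the paper's own proof: pull back along $\pi$ to kill $sw_1$, split $\pi^*W$ as $M\oplus M^{-1}$, use the covering involution to land in $\Prym(X_{sw_1},X)$, and invert by pushing forward. You are in fact more careful than the paper at two points: you rule out the alternative $\iota^*M\cong M$ (which would make the splitting descend and force $sw_1(W)=0$), and you record the determinant and degree computations.

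However, the step you defer to the end --- resolving the ambiguity $M\leftrightarrow M^{-1}$ --- is not a routine verification but a genuine obstruction, and as written your argument (like the paper's, which simply asserts ``the above construction gives a bijection'') does not close it. Since $\pi\circ\iota=\pi$, there are canonical isomorphisms $\pi_*M\cong\pi_*(\iota^*M)\cong\pi_*(M^{-1})$ compatible with the induced orthogonal structures, so the forward map $M\mapsto\pi_*M$ genuinely identifies $M$ with $M^{-1}$; conversely, by uniqueness of the decomposition of $\pi^*W$ into indecomposable summands, the fibre of this map over an isomorphism class $[W]$ is exactly the unordered pair $\{M,M^{-1}\}$. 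Hence the correspondence you construct is a bijection between $\Prym(X_{sw_1},X)/\Z_2$ (inversion) and isomorphism classes of $\sO(2,\C)$-bundles with first Stiefel--Whitney class $sw_1$, and is two-to-one from $\Prym(X_{sw_1},X)$ itself away from the $2$-torsion points. This is consistent with how the Prym variety actually enters later in the paper (Theorem \ref{THM Homotopy type sw1} retracts onto $\Prym^{sw_2}(X_{sw_1},X)/\Z_2$) and with the classical statement that $M$ with $\pi_*M\cong W$ is unique only up to $\iota^*$. So keep your two constructions, but either prove the statement with the quotient $\Prym(X_{sw_1},X)/\Z_2$ in place of $\Prym(X_{sw_1},X)$, or specify the extra rigidifying data on the orthogonal bundle (e.g.\ a choice of one of the two isotropic line subbundles of $\pi^*W$) that a point of $\Prym(X_{sw_1},X)$ itself actually classifies.
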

\begin{proof}
Let $(W,Q_W)$ be a holomorphic $\sO(2,\C)$-bundle on $ X.$ Since $X_{sw_1}$ is the orientation double cover, $sw_1(\pi^*W,\pi^*Q_W)=0$. 
Thus,
\[(\pi^*W,\pi^*Q_W)=\left(M\oplus M^{-1},\mtrx{0&1\\1&0}\right)\] and $(\pi^*W,\pi^*Q_W)$ is invariant under the covering involution 
	\[\iota^*(M\oplus M^{-1})=\iota^*M\oplus\iota^* M^{-1}\cong M\oplus M^{-1}.\]
Given $M\in Pic^0(X_{sw_1})$ with $\iota^*M=M^{-1}$, $(W,Q_W)=(\pi_*M,\pi_*\iota^*)$ is an orthogonal bundle. 
Since $X_{sw_1}\ra X$ is unramified, $\pi^*\pi_*(M)=M\oplus \iota^*M$, and the above construction gives a bijection. 
\end{proof}
\begin{Remark}
	There are two connected components of $\Prym(X_{sw_1},X)$. For $M\in\Prym(X_{sw_1},X)$, the second Stiefel-Whitney class of the orthogonal bundle $\pi_*M$ distinguishes the connected component which contains $M$ \cite{MumO2Bun}. Therefore we will write 
	\begin{equation}\label{EQ prym decomp}
		\Prym(X_{sw_1},X)=\bigsqcup\limits_{sw_2\in H^2(X,\Z_2)}\Prym^{sw_2}(X_{sw_1},X)~.
	\end{equation}
	The connected component of the identity is the {\em Prym variety} of the cover $\pi:X_{sw_1}\to X.$
\end{Remark}
It is not hard to show that the holomorphic bundle $W\oplus \Lambda^2W$ of a polystable Higgs bundle of the form \eqref{EQ SL3 Higgsbundle} with nonzero first Stiefel-Whitney class is polystable as a vector bundle.
Furthermore, the $\sS(\sO(1,\C)\times\sO(2,\C))$-gauge transformation $(g_{\Lambda^2W},g_W)=(det(Q_W),Q_W)$ defines an isomorphism between $(W,\eta)$ and $(W^*,\eta^\dagger).$ 
Thus, as an analog of Theorem \ref{THM M0 homotopy type} we have:
\begin{Theorem}\label{THM Homotopy type sw1}
	 For $(sw_1,sw_2)\in (H^1(X,\Z_2)\setminus\{0\})\times H^2(X,\Z_2)$, the space $\Mm_{sw_1}^{sw_2}(\sSO(1,2)$ from \eqref{EQ sw1 sw2 decomposition} deformation retracts onto the space $\Prym^{sw_2}(X_{sw_1},X)/\Z_2$ where $\Z/2$ acts by inversion. 
\end{Theorem}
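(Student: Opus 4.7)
The plan is to imitate the proof of Theorem \ref{THM M0 homotopy type}, replacing the explicit splitting $W=M\oplus M^{-1}$ by Mumford's description from Proposition \ref{Prop Mumford O2}. Fix $(sw_1,sw_2)\in(H^1(X,\Z_2)\setminus\{0\})\times H^2(X,\Z_2)$ and let $\pi:X_{sw_1}\to X$ be the associated unramified double cover with involution $\iota$.

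First I would deformation-retract in the Higgs-field direction. Let $(V,W,\eta)$ represent a class in $\Mm_{sw_1}^{sw_2}(\sSO(1,2))$; necessarily $V=\Lambda^2 W$. The paragraph preceding the theorem provides the essential input: for such $\sSO(1,2)$-Higgs bundles of the form \eqref{EQ SL3 Higgsbundle} with nonzero first Stiefel-Whitney class the underlying holomorphic vector bundle $W\oplus\Lambda^2 W$ is already polystable. Consequently, for every $t\in[0,1]$ the triple $(V,W,t\eta)$ is still polystable, giving a continuous path inside $\Mm_{sw_1}^{sw_2}(\sSO(1,2))$ from $[(V,W,\eta)]$ to $[(V,W,0)]$. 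This produces the desired strong deformation retract onto the subspace of Higgs bundles with vanishing Higgs field.

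Next I would identify that subspace. An isomorphism class of a polystable $\sSO(1,2)$-Higgs bundle with $\eta=0$ and Stiefel-Whitney invariants $(sw_1,sw_2)$ is determined by the isomorphism class of its orthogonal bundle $(W,Q_W)$. By Proposition \ref{Prop Mumford O2} together with \eqref{EQ prym decomp}, these isomorphism classes are parameterized by $\Prym^{sw_2}(X_{sw_1},X)$ via $M\mapsto \pi_* M$. However, the explicit $\sS(\sO(1,\C)\times\sO(2,\C))$-gauge transformation $(g_{\Lambda^2 W},g_W)=(\det Q_W,Q_W)$ recalled just before the theorem identifies $(W,0)$ with $(W^*,0)$; translating through $\pi_*$ this amounts to the involution $M\mapsto M^{-1}$ on $\Prym^{sw_2}(X_{sw_1},X)$, since $\iota^* M=M^{-1}$ and $\pi_*(M^{-1})\cong(\pi_* M)^*$. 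Quotienting gives $\Prym^{sw_2}(X_{sw_1},X)/\Z_2$.

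The main obstacle I anticipate is carefully verifying this last identification: one must confirm that the gauge transformation $(\det Q_W,Q_W)$ corresponds exactly to the involution $M\mapsto M^{-1}$ under the Mumford bijection, and that no further identifications in the moduli arise from automorphisms of $(W,Q_W)$ (the orthogonal automorphism group is $\sO(2,\C)$, whose connected component already acts trivially on a single $M\in\Prym$, while the other component realizes precisely the swap $M\leftrightarrow\iota^* M=M^{-1}$). Once this is checked, the retraction and the parameterization assemble into the claimed homotopy equivalence $\Mm_{sw_1}^{sw_2}(\sSO(1,2))\simeq\Prym^{sw_2}(X_{sw_1},X)/\Z_2$.
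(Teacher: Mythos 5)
Your proposal is correct and follows exactly the route the paper intends: it states the two key inputs (polystability of $W\oplus\Lambda^2W$ as a vector bundle, and the gauge transformation $(\det Q_W,Q_W)$ identifying $W$ with $W^*$) and then declares the theorem ``as an analog of Theorem \ref{THM M0 homotopy type}'', which is precisely the scaling retraction $\eta\mapsto t\eta$ followed by Mumford's $\Prym$ parameterization modulo the inversion $M\leftrightarrow\iota^*M=M^{-1}$ that you carry out. The only point worth emphasizing, which you correctly flag as the delicate step, is that the $\Z_2$-quotient ultimately comes from the ambiguity $\pi_*M\cong\pi_*(\iota^*M)$ in Mumford's correspondence rather than from any new identification in the moduli space.
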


\begin{Corollary}
	Let $\Xx_d(\Gamma,\sSO(1,2))$ and $\Xx_{sw_1}^{sw_2}(\Gamma,\sSO(1,2))$ denote the connected components of the character variety associated to $\Mm_d(\sSO(1,2))$ and $\Mm_{sw_2}^{sw_1}(\sSO(1,2)).$ If $\rho\in\Xx(\Gamma,\sSO(1,2))$, then $\rho$ can be deformed to a representation with compact Zariski closure if and only if $\rho$ is in $\Xx_0(\sSO(1,2))$ or $\Xx_{sw_1}^{sw_2}(\sSO(1,2))$ with $sw_1\neq0.$
\end{Corollary}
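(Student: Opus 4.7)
By the nonabelian Hodge correspondence (Theorem \ref{THM: NAHC}), conjugacy classes of reductive representations are in bijection with isomorphism classes of polystable $\sSO(1,2)$-Higgs bundles, and this bijection preserves connected components. The first step of the plan is to identify representations with compact Zariski closure with polystable $\sSO(1,2)$-Higgs bundles whose Higgs field vanishes: if $\eta=0$, then the Corlette/Donaldson harmonic metric reduces to a flat $\sH$-connection (by the Narasimhan--Seshadri--Ramanathan theorem applied to the polystable $\sH_\C$-bundle $(V,W)$), so the representation factors through the maximal compact $\sS(\sO(1)\times\sO(2))\subset\sSO(1,2)$; conversely, any representation factoring through this compact subgroup produces a Higgs bundle with $\eta=0$. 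Since continuous paths in $\Xx(\Gamma,\sSO(1,2))$ remain inside a single connected component, the corollary reduces to determining which components contain an $\sSO(1,2)$-Higgs bundle with $\eta=0$.

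For the forward direction I would analyze polystability of $(\Lambda^2 W,W,0)$ according to $sw_1(W)$. If $sw_1(W)=0$, then $W\cong M\oplus M^{-1}$ for some $M\in\Pic^d(X)$, and polystability of the associated $\sSL(3,\C)$-bundle $M\oplus\Oo\oplus M^{-1}$ (a direct sum of line bundles) forces $d=0$, so the Higgs bundle sits in $\Mm_0(\sSO(1,2))$. If $sw_1(W)\ne0$, then by Proposition \ref{Prop Mumford O2} we have $W=\pi_*M$ with $M\in\Prym(X_{sw_1},X)\subset\Pic^0(X_{sw_1})$; since $\deg M=0$ the orthogonal bundle $W$ is automatically polystable, so $\eta=0$ is permitted in every component $\Mm_{sw_1}^{sw_2}(\sSO(1,2))$. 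Conversely, in any $\Mm_d(\sSO(1,2))$ with $d>0$ the line bundle $M$ of positive degree destabilizes $M\oplus\Oo\oplus M^{-1}$, so no $\eta=0$ Higgs bundle can live there. This pins down the only components admitting compact representations as $\Xx_0(\sSO(1,2))$ and $\Xx_{sw_1}^{sw_2}(\sSO(1,2))$ with $sw_1\ne0$.

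The converse then follows immediately from Theorems \ref{THM M0 homotopy type} and \ref{THM Homotopy type sw1}: each of these components deformation retracts onto a moduli space of polystable $\sO(2,\C)$-bundles ($\Pic^0(X)/\Z_2$ in one case, $\Prym^{sw_2}(X_{sw_1},X)/\Z_2$ in the other), every point of which corresponds under the first step to a representation factoring through the compact subgroup $\sS(\sO(1)\times\sO(2))$. Transporting the retraction through Theorem \ref{THM: NAHC} yields the required continuous path inside the character variety. The main obstacle I anticipate is making precise the equivalence between $\eta=0$ in a polystable Higgs bundle and the underlying representation factoring through a maximal compact; once that is clean, everything else is bookkeeping with the explicit moduli descriptions collected earlier in Section \ref{Section PGL2}.
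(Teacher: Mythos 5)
Your proposal is correct and follows essentially the same route as the paper: both rest on the equivalence between compact Zariski closure and vanishing Higgs field under the nonabelian Hodge correspondence, combined with the deformation retractions of Theorems \ref{THM M0 homotopy type} and \ref{THM Homotopy type sw1} to locate the components containing such Higgs bundles. Your extra detail on the polystability analysis for $d>0$ and on the harmonic-metric reduction is a welcome elaboration of steps the paper leaves implicit, but it is the same argument.
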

\begin{proof}
	Recall that a representation $\rho:\Gamma\to\sSO(1,2)$ has compact Zariski closure if and only if the Higgs field of the corresponding Higgs bundle is identically zero. By the above theorems this only happens in the components $\Mm_0(\sSO(1,2))$ or $\Mm_{sw_1}^{sw_2}(\sSO(1,2))$ with $sw_1\neq0.$
\end{proof}
\section{Deforming into Hitchin representations and maximal representations }\label{section Hitchin maximal}
We now describe some generalizations and deformations of $\sSO(1,2)$ and $\sP\sSL(2,\R)$ representations into split real groups and groups of Hermitian type. Hitchin representations into split groups and maximal representations into Hermitian groups define important families of connected components of the character variety since they are the only known components that consist entirely of representations satisfying Labourie's Anosov property \cite{AnosovFlowsLabourie,MaxRepsAnosov}. Here we will show that, apart from maximal $\sSO_0(2,3)$-representations, all maximal representations in $\sSp(2n,\R)$ and $\sSO_0(2,n)$ and all Hitchin representations can be interpreted as deformation spaces of Fuchsian representations.

For $\sP\sSL(2,\R)=\sSO_0(1,2)$, by Remark \ref{Remark: Fuchsian Higgs bundles}, the Higgs bundles which give rise to Fuchsian representations are given by 
\[\xymatrix{K\ar@/_.5pc/[r]_1&\Oo\ar@/_.5pc/[r]_1\ar@/_.5pc/[l]_{q_2}&K^{-1}\ar@/_.5pc/[l]_{q_2}}.\]
Lifting such a Higgs bundle to an $\sSL(2,\R)$-Higgs bundle is determined by choosing a square root $K^\haf$ of $K.$ Namely, the $\sSL(2,\R)$-Higgs bundles are given by 
$(E,\Phi)=\left(K^\haf\oplus K^{-\haf},\mtrx{0&q_2\\1&0}\right).$
Indeed, the second symmetric product of such an $(E,\Phi)$ gives the $\sSO_0(1,2)$-Higgs bundle above. The $\sSp(2,\R)$-Higgs bundle associated to $(E,\Phi)$ is 
$(V,\beta,\gamma)=(K^\haf,q_2,1). $
We will refer to such Higgs bundles as {\em Fuchsian Higgs bundles}.

\subsection{The Hitchin component}
Let $\sG$ be a split real Lie group, the classical split Lie groups are $\sP\sSL(n,\R),$ $\sSO_0(n,n+1)$, $\sP\sSp(2n,\R)$ and $\sP\sSO_0(n,n).$ For such a group, Kostant \cite{ptds} showed that there exists a special embedding of $\sP\sSL(2,\R)$ into $\sG$ called the principal embedding, for details on the principal embedding see section 3 of \cite{liegroupsteichmuller}. 
The principal embedding defines an ``irreducible'' way of deforming $\sP\sSL(2,\R)$-representations into $\Xx(\Gamma,\sG).$ 
When $\sG=\sP\sSL(n,\R)$ this embedding comes from the unique $n$-dimensional irreducible representation of $\sP\sSL(2,\R),$ namely the $(n-1)^{st}$ symmetric product of the standard representation of $\sP\sSL(2,\R).$

\begin{Definition}
	For a split real Lie group $\sG,$ the {\em Hitchin component} $\Hit(\sG)$ is the connected component of $\Xx(\Gamma,\sG)$ which contains $\iota\circ\rho_{Fuch}$ for $\rho_{Fuch}:\Gamma\to\sP\sSL(2,\R)$ a Fuchsian representation and $\iota:\sP\sSL(2,\R)\ra\sG$ the principal embedding. 
\end{Definition}

\begin{Theorem}(\cite{liegroupsteichmuller})
	The Hitchin component $\Hit(\sG)$ is smooth and diffeomorphic to a vector space of dimension $dim(\sG)(2g-2)$. Moreover, the Hitchin component does not contain representations with compact Zariski closure. 
\end{Theorem}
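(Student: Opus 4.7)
The plan is to pass to Higgs bundles via Theorem~\ref{THM: NAHC} and realise $\Hit(\sG)$ as the image of the \emph{Hitchin section} of the Hitchin fibration $H:\Mm(\sG)\to B(\sG)$. Fix a Riemann surface structure $X$ and a square root $K^{\haf}$. Associated to the principal embedding $\iota:\sP\sSL(2,\R)\to\sG$ is a principal $\fsl_2$-triple $(e,h,f)\subset\fg_\C$, and the adjoint action of $\mathrm{ad}\,h$ decomposes $\fg_\C$ into irreducible $\mathrm{ad}\,\fsl_2$-modules $V_{m_1}\oplus\cdots\oplus V_{m_r}$, where $r=\mathrm{rank}(\sG)$ and $m_1\leq\cdots\leq m_r$ are the exponents of $\sG$. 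Classical Lie theory gives $\sum_{i=1}^r(2m_i+1)=\dim(\sG)$ and $m_1=1$ for the classical split groups of interest.

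Next I would construct the Hitchin section. The line bundle $K^{\haf}$ together with the embedding $\sSO(2,\C)\hookrightarrow\sH_\C$ induced by $\iota$ determines a fixed principal $\sH_\C$-bundle $\Pp_0$. Picking highest weight vectors $e_i\in V_{m_i}\cap\fm_\C$, for any
\[q=(q_{m_1+1},\dots,q_{m_r+1})\in B(\sG):=\bigoplus_{i=1}^r H^0(X,K^{m_i+1})\]
one forms a Higgs bundle $(\Pp_0,\varphi_q)$ with $\varphi_q=f+\sum_i q_{m_i+1}\,e_i$, where the $\mathrm{ad}\,h$-weights match the tensor powers of $K^{\haf}$ so that $\varphi_q$ is a holomorphic section of $\Pp_0[\fm_\C]\otimes K$. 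By Riemann--Roch, $\dim_\C B(\sG)=\sum_i(2m_i+1)(g-1)=\dim(\sG)(g-1)$, so as a real vector space $B(\sG)$ has the desired dimension $\dim(\sG)(2g-2)$.

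I would then show that every Higgs bundle $(\Pp_0,\varphi_q)$ in the image of the section $s:q\mapsto(\Pp_0,\varphi_q)$ is stable and simple. Stability reduces to the fact that $f$ is a regular nilpotent, so the parabolic degree of any $\varphi_q$-invariant parabolic reduction of $\Pp_0$ is nonnegative; simplicity follows from the observation that no nonzero element of $\fh_\C$ commutes with $f$. These properties make $\Mm(\sG)$ smooth at every point in the image of $s$, and $s$ is a holomorphic injection from a vector space of the same complex dimension as $\Mm(\sG)$. Therefore $s(B(\sG))$ is open in $\Mm(\sG)$; it is also closed, since $s$ is continuous with $H\circ s=\mathrm{id}_{B(\sG)}$. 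Hence $s(B(\sG))$ is a union of connected components of $\Mm(\sG)$, and being the image of a connected space it is a single component. At $q=(q_2,0,\dots,0)$ the construction recovers the principal $\fsl_2$ Higgs bundle associated to a Fuchsian representation (Remark~\ref{Remark: Fuchsian Higgs bundles}), so this component equals $\Hit(\sG)$, which is therefore diffeomorphic to the real vector space $B(\sG)$ of dimension $\dim(\sG)(2g-2)$.

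The main obstacle is verifying that the image of $s$ really coincides with the Hitchin component; in \cite{liegroupsteichmuller} Hitchin handles this via Morse theory for the proper Hitchin function $\|\varphi\|_{L^2}^2$, identifying its minima on the relevant component with the zero locus of the section. The final assertion is then immediate: a representation with compact Zariski closure corresponds under Theorem~\ref{THM: NAHC} to a polystable Higgs bundle with vanishing Higgs field, whereas every element of the image of $s$ satisfies $\varphi_q=f+\sum q_{m_i+1}e_i$ with nonzero $f$-component regardless of $q$, so no such representation lies in $\Hit(\sG)$.
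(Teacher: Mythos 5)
The paper does not prove this theorem---it is quoted directly from Hitchin's \cite{liegroupsteichmuller}---and your sketch is a correct reconstruction of Hitchin's original argument: the section of the Hitchin fibration built from a principal $\fsl_2$-triple, stability and simplicity of its image, the open-and-closed argument, and the observation that the Higgs field never vanishes on the image, which rules out compact Zariski closure. One small remark: the Morse-theoretic step you flag as the ``main obstacle'' is not actually needed for this statement, since your open-and-closed argument already shows that $s(B(\sG))$ is a single connected component and it contains the image of a Fuchsian representation under the principal embedding at $q=(q_2,0,\dots,0)$, which identifies it with $\Hit(\sG)$ by definition; Hitchin's Morse theory enters only in counting \emph{all} components of the character variety.
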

\begin{Remark}
	For the classical groups, the Hitchin component is parameterized as follows:
\[\xymatrix@R=.2em{\Hit(\sP\sSL(n,\R))\cong\bigoplus\limits_{j=2}^n H^0(K^j)&,& \Hit(\sSO_0(n,n+1))=\bigoplus\limits_{j=1}^nH^0(K^{2j})\\\Hit(\sP\sSp(2n,\R))\cong\bigoplus\limits_{j=1}^n H^0(K^{2j})&\text{and}& \Hit(\sP\sSO_0(n,n))=\bigoplus\limits_{j=1}^{n-1}H^0(K^{2j})\oplus H^0(K^{n})~.}\]
Hitchin representations into $\sP\sSL(2n,\R)$, $\sP\sSp(2n,\R)$ and $\sP\sSO_0(2n,2n)$ always lifts to $\sSL(2n,\R)$, $\sSp(2n,\R),$ and $\sSO_0(2n,2n).$ In all cases, there are $2^{2g}$ choices of lifts, and each choice defines a different connected component of the appropriate character variety.
\end{Remark}
\begin{Proposition}\label{DEF: SL(n,R) Hitchin component}
An $\sSL(n,\R)$-Higgs bundle $(E,\Phi)$ defines a point in a Hitchin component if it is gauge equivalent to 
	\[E=K^\frac{n-1}{2}\oplus K^\frac{n-3}{2}\oplus\cdots \oplus K^\frac{3-n}{2}\oplus K^\frac{1-n}{2}\] 
	and 
	\begin{equation}
		\label{EQ: SL(n,R) Hitchin component Higgs field}\Phi=\mtrx{0&q_2&q_3&\cdots&q_{n-1}&q_{n}\\
				 1&0&q_2&\cdots&q_{n-2}&q_{n-1}\\
				 &\ddots&\ddots&\ddots&&\\
				 &&1&0&q_2&q_3\\
				 &&&1&0&q_2\\
				 &&&&1&0}~:~E\longrightarrow E\otimes K ~.
	\end{equation}
			Here $q_{j}\in H^0(K^{j})$ and the orthogonal structure on $E$ is given by the pairing on each $K^j\oplus K^{-j}$.
\end{Proposition}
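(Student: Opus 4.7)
The plan is threefold: first verify that the given data defines a polystable $\sSL(n,\R)$-Higgs bundle, then identify the sublocus $q_2=\cdots=q_n=0$ with a Higgs bundle coming from the principal embedding applied to a Fuchsian $\sSL(2,\R)$-representation, and finally invoke a dimension count against Hitchin's theorem to conclude that the parameterization diffeomorphs $\bigoplus_{j=2}^n H^0(K^j)$ onto $\Hit(\sSL(n,\R))$.

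First, I would verify the axioms for an $\sSL(n,\R)$-Higgs bundle. The exponents $\tfrac{n-1}{2},\tfrac{n-3}{2},\ldots,\tfrac{1-n}{2}$ sum to zero, so $\Lambda^n E = \Oo$; the orthogonal structure $Q$ is the anti-diagonal pairing of $K^j$ with $K^{-j}$; the displayed $\Phi$ is manifestly traceless; and a direct check shows $\Phi_{ij}=\Phi_{n+1-j,n+1-i}$ (symmetry about the anti-diagonal), which is equivalent to the condition $\Phi^T Q = Q\Phi$. Polystability follows from the fact that the identity maps $1\colon K^{(n-1-2k)/2}\to K^{(n-3-2k)/2}\otimes K$ along the subdiagonal make $(E,\Phi)$ a cyclic pair, so no proper holomorphic subbundle is $\Phi$-invariant. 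Hence $(E,\Phi)$ is stable as an $\sSL(n,\C)$-Higgs bundle, and therefore polystable as an $\sSL(n,\R)$-Higgs bundle by the stability-reduction proposition.

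Second, when $q_2=\cdots=q_n=0$ the pair $(E,\Phi)$ is gauge equivalent, after rescaling the subdiagonal, to the $(n-1)$st symmetric power of the Fuchsian $\sSL(2,\R)$-Higgs bundle $(K^\haf\oplus K^{-\haf},\smtrx{0 & 0 \\ 1 & 0})$. Under Theorem \ref{THM: NAHC} this symmetric power corresponds to $\iota\circ\rho_{Fuch}$, where $\iota\colon\sSL(2,\R)\to\sSL(n,\R)$ is the principal embedding, and so lies in $\Hit(\sSL(n,\R))$ by definition. Since the set of Higgs bundles of the stated form is parameterized by the connected vector space $\bigoplus_{j=2}^n H^0(K^j)$, the entire family must lie in the same connected component of $\Mm(\sSL(n,\R))$, namely $\Hit(\sSL(n,\R))$.

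Finally, a dimension count shows that this parameterization fills out $\Hit(\sSL(n,\R))$. The real dimension of $\bigoplus_{j=2}^n H^0(K^j)$ is $2\sum_{j=2}^n(2j-1)(g-1)=(n^2-1)(2g-2)=\dim(\sSL(n,\R))(2g-2)$, which matches the dimension of $\Hit(\sSL(n,\R))$ from Hitchin's theorem. The main obstacle is checking injectivity modulo the $\sSO(n,\C)$-gauge action: distinct tuples $(q_2,\ldots,q_n)$ produce Higgs bundles with distinct characteristic polynomials of $\Phi$, and since the characteristic polynomial is a gauge invariant, the parameterization is injective modulo gauge. Combining injectivity with the dimension match and the smoothness of $\Hit(\sSL(n,\R))$ gives that the map is a diffeomorphism onto $\Hit(\sSL(n,\R))$, which establishes both directions of the proposition.
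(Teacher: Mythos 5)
The paper states this proposition without proof (it is a recollection of Hitchin's description from \cite{liegroupsteichmuller}), so I am judging your argument on its own terms. Your overall strategy is reasonable, and for the proposition as literally stated (an ``if'', not an ``if and only if'') your second step already does the essential work: the family is parameterized by the connected space $\bigoplus_{j=2}^n H^0(K^j)$, and at $q_2=\cdots=q_n=0$ it is the $(n-1)$st symmetric power of the Fuchsian $\sSL(2,\R)$-Higgs bundle $\big(K^\haf\oplus K^{-\haf},\smtrx{0&0\\1&0}\big)$, i.e.\ the Higgs bundle of $\iota\circ\rho_{Fuch}$, so the entire connected family lies in a Hitchin component. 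Your third step is not needed for the statement, and as written it is also incomplete: injectivity plus a dimension count only gives, via invariance of domain, that the image is \emph{open} in $\Hit(\sSL(n,\R))$; to conclude surjectivity one must also show the image is closed, which is exactly where the analogous argument in the paper (the proof sketch of Theorem~\ref{THM SOnn+1comp}) invokes properness of the Hitchin fibration.

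The genuine gap is in your polystability step. The assertion that the subdiagonal identity maps make $(E,\Phi)$ cyclic and hence that ``no proper holomorphic subbundle is $\Phi$-invariant'' is false: at $q_2=\cdots=q_n=0$ the Higgs field is the principal nilpotent and each truncation $F_k=\bigoplus_{i\geq k}K^{(n+1-2i)/2}$, $k\geq 2$, is a proper $\Phi$-invariant subbundle. Cyclicity (the top summand generating $E$ under $\Phi$) only excludes invariant subbundles containing $K^{(n-1)/2}$. The definition of stability you are using requires instead that every $\Phi$-invariant subbundle have \emph{negative degree}, which is what actually holds: $\deg F_k=\tfrac{(2g-2)}{2}(n-k+1)(1-k)<0$ for $k\geq 2$, and for a general invariant $F$ a short filtration argument, using that the subdiagonal entries are isomorphisms $K^{(n+1-2i)/2}\to K^{(n-1-2i)/2}\otimes K$, forces $\deg F<0$. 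Without this degree estimate your first step does not establish that the displayed data defines a point of $\Mm(\sSL(n,\R))$ at all, so the argument as written has a hole precisely at the base point $q=0$ on which the rest of your proof relies.
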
 

\begin{Remark}
	Note that when $n$ is even, we have to choose a square root of the canonical bundle. The $2^{2g}$ components of $\Hit(\sSL(2n,\R))$ are given by twisting the bundle $E$ in Proposition \ref{DEF: SL(n,R) Hitchin component} by the $2^{2g}$ square roots of the trivial bundle. 
	Also, note that the zero locus of $q_2,\cdots,q_{n}$ is the $(n-1)^{st}$ symmetric product of the Fuchsian Higgs bundle $\big(K^\haf\oplus K^{-\haf},\smtrx{0&0\\1&0}\big)$. In particular, the representation $\Gamma\to\sSL(n,\R)$ corresponding to the zero locus of $q_2,\cdots,q_n$ is given by $\iota\circ\widehat\rho_{Fuch}$ where $\widehat\rho_{Fuch}:\Gamma\to\sSL(2,\R)$ is a lift of a Fuchsian representation and $\iota:\sSL(2,\R)\to\sSL(n,\R)$ is the principal embedding. In fact, it can be shown that the Fuchsian representation $\rho_{Fuch}$ uniformizes the Riemann surface $X.$ 
\end{Remark}
For the groups $\sSO_0(n,n+1)$ and $\sSp(2n,\R)$, the Hitchin component(s) can be seen as the subsets of the $\sSL(2n+1,\R)$ and $\sSL(2n,\R)$ Hitchin component(s) defined by the vanishing the differentials of odd degree. More precisely, we have the following proposition.
\begin{Proposition}\label{DEF SO(n,n+1) Hitchin comp}
An $\sSO_0(n,n+1)$-Higgs bundles $(V,W,\eta)$ is in the Hitchin component if 
\[\xymatrix{ V=K^{n-1}\oplus K^{n-3}\oplus\cdots\oplus K^{3-n}\oplus K^{1-n} & , & W=K^n\oplus K^{n-2}\oplus\cdots\oplus K^{2-n}\oplus K^{-n}}\]
and $\eta$ is the component of the Higgs field \eqref{EQ: SL(n,R) Hitchin component Higgs field} which maps $V$ to $W\otimes K.$

An $\sSp(2n,\R)$-Higgs bundle $(V,\beta,\gamma)$ is in a Hitchin component if, for a choice of $K^\haf$, 
\[V=K^\frac{2n-1}{2}\oplus K^\frac{2n-5}{2}\oplus\cdots\oplus K^\frac{7-2n}{2}\oplus K^\frac{3-2n}{2}\] 
and $\beta$ and $\gamma$ are the components of the Higgs field \eqref{EQ: SL(n,R) Hitchin component Higgs field} which map $V^*$ to $V\otimes K$ and $V$ to $V^*\otimes K.$
 \end{Proposition}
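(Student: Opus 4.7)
The plan is to prove Proposition \ref{DEF SO(n,n+1) Hitchin comp} by analyzing how the $\sSL(2n+1,\R)$- and $\sSL(2n,\R)$-Hitchin Higgs bundles of Proposition \ref{DEF: SL(n,R) Hitchin component} behave under the subgroup inclusions $\sSO_0(n,n+1)\hookrightarrow\sSL(2n+1,\R)$ and $\sSp(2n,\R)\hookrightarrow\sSL(2n,\R)$, combined with a connectedness argument seeded by the principal embedding of a Fuchsian representation.

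\textbf{The $\sSO_0(n,n+1)$ statement.} Label the summands of the $\sSL(2n+1,\R)$-Hitchin bundle by $L_i=K^{n+1-i}$ for $i=1,\ldots,2n+1$, so that an entry $\Phi_{ij}$ of \eqref{EQ: SL(n,R) Hitchin component Higgs field} is a section of $L_iL_j^{-1}K=K^{j-i+1}$. The proposition's $V$-summands are precisely the $L_i$ with $i$ even and the $W$-summands are the $L_i$ with $i$ odd; the dualities $L_i\otimes L_{2n+2-i}\cong\Oo$ equip $V$ and $W$ with natural $\sO$-structures, so $(V,W,\eta)$ is a genuine $\sSO_0(n,n+1)$-Higgs bundle in the sense of Definition \ref{DEF SO(p,q) Higgs bundles}. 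An entry $\Phi_{ij}$ lies in an off-diagonal block $V\to W\otimes K$ or $W\to V\otimes K$ exactly when $i$ and $j$ have opposite parities, equivalently when $j-i+1$ is \emph{even}, while entries with $i,j$ of the same parity correspond to the odd differentials $q_3,q_5,\ldots,q_{2n+1}$ and live in the diagonal blocks. Consequently, the subspace of $\sSL(2n+1,\R)$-Hitchin Higgs bundles on which the odd differentials vanish is exactly the image of the Higgs bundles $(V,W,\eta)$ described in the proposition, and a direct compatibility check shows that $\eta^\dagger=-Q_W^{-1}\eta^TQ_V$ produces the complementary off-diagonal block of \eqref{EQ: SL(n,R) Hitchin component Higgs field}. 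In particular, by Proposition \ref{DEF: SL(n,R) Hitchin component}, every such Higgs bundle lies in the $\sSL(2n+1,\R)$-Hitchin component.

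To upgrade this to membership in $\Hit(\sSO_0(n,n+1))$, I identify the Fuchsian point. Setting in addition $q_2=q_4=\cdots=q_{2n}=0$ reduces $\Phi$ to the matrix with $1$'s on the sub-diagonal, which is precisely the $\Sym^{2n}$-image of the Fuchsian $\sSL(2,\R)$-Higgs bundle $\bigl(K^{1/2}\oplus K^{-1/2},\smtrx{0&0\\1&0}\bigr)$. Since $\Sym^{2n}(\C^2)$ preserves a symmetric bilinear form of signature $(n+1,n)$, the principal embedding $\sSL(2,\R)\to\sSL(2n+1,\R)$ factors through $\sSO_0(n,n+1)$, and Theorem \ref{THM: NAHC} identifies this Higgs bundle with $\iota\circ\rho_{Fuch}$, a point of $\Hit(\sSO_0(n,n+1))$ by definition. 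The family of Higgs bundles described in the proposition is parameterized by the connected vector space $\bigoplus_{j=1}^n H^0(K^{2j})$ and contains this Fuchsian point, so the whole family lies in $\Hit(\sSO_0(n,n+1))$.

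\textbf{The symplectic case.} The argument for $\sSp(2n,\R)$ runs in parallel. Setting $L_i=K^{(2n+1-2i)/2}$ for $i=1,\ldots,2n$, the proposition's $V$ consists of the odd-indexed $L_i$ and $V^*$ of the even-indexed ones, with the duality $L_i\leftrightarrow L_{2n+1-i}$ furnishing a symplectic structure on $V\oplus V^*$. The same parity computation shows that the off-diagonal blocks $V^*\to V\otimes K$ and $V\to V^*\otimes K$ receive exactly the even differentials $q_2,q_4,\ldots,q_{2n}$, identified with $\beta$ and $\gamma$, while the odd $q_j$'s live in the diagonal blocks and must vanish for an $\sSp(2n,\R)$-reduction. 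The Fuchsian point is the $\Sym^{2n-1}$-image of the Fuchsian $\sSL(2,\R)$-Higgs bundle, which lies in $\sSp(2n,\R)$ because $\Sym^{2n-1}(\C^2)$ preserves an alternating form. Connectedness of $\bigoplus_{j=1}^n H^0(K^{2j})$ then places the whole family in a Hitchin component of $\sSp(2n,\R)$ (the choice of $K^{1/2}$ selects which component). The main obstacle throughout is the bookkeeping needed to verify that the block prescribed by the orthogonal (resp. symplectic) structure---i.e.\ $\eta^\dagger$ (resp. $\gamma$) as a \emph{consequence} of $\eta$ (resp. $\beta$)---coincides with the corresponding part of the $\sSL$-Hitchin Higgs field \eqref{EQ: SL(n,R) Hitchin component Higgs field}, including the correct signs arising from $-Q_W^{-1}\eta^TQ_V$.
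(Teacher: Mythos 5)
Your argument is correct and follows the same route the paper (implicitly) takes: the paper offers no proof beyond the remark that these Hitchin components are the loci in the $\sSL(2n+1,\R)$- and $\sSL(2n,\R)$-Hitchin components where the odd-degree differentials vanish, and your parity bookkeeping of the summands $L_i$, the identification of the orthogonal/symplectic pairing $L_i\leftrightarrow L_{2n+2-i}$, and the connectedness argument anchored at the principal-embedding Fuchsian point are exactly the details behind that remark. The only caveat is notational: the compatibility you defer as "bookkeeping" amounts to observing that symmetry of \eqref{EQ: SL(n,R) Hitchin component Higgs field} with respect to $Q_V\oplus Q_W$ is equivalent to the $\fso(n,n+1)$ condition for $Q_V\oplus(-Q_W)$ on the off-diagonal blocks (your formula should read $\eta^\dagger=Q_V^{-1}\eta^TQ_W$ up to the sign convention on $Q_W$), so the signs do close up.
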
 
The Hitchin component(s) for $\sSO_0(n,n)$ cannot be defined as a subspace of the $\sSL(2n,\R)$-Hitchin component. Rather, it is can be interpreted as the {\em deformation space} of $\sSO_0(n,n-1)$ Hitchin representations in the $\sSO_0(n,n)$-character variety\footnote{Indeed, one can show that the action of the principal embedding $\iota:\sP\sSL(2,\R)\to\sSO(n,n)$ on $\R^{2n}$ via the standard representations preserves a splitting $\R^{n,n}=\R^{n,n-1}\oplus\R^{0,1}.$}.
\begin{Proposition}\label{DEF: SO(n,n) Hitchin component}
	An $\sP\sSO_0(n,n)$-Higgs bundles $(V,W,\eta)$ is in the Hitchin component if 
	\[\xymatrix{W=W_0\oplus\Oo&\text{and}&\eta=(\eta_0~,~\eta_{Pf}):V\to W_0\otimes K\oplus K}\] are such that $(V,W_0,\eta_0)$ defines a Higgs bundles in the $\sSO_0(n,n-1)$-Hitchin component and $\eta_{Pf}:V\to K$ is given by 
	\[(0,0,\cdots,0,q_n):K^{n-1}\oplus K^{n-3}\oplus\cdots\oplus K^{3-n}\oplus K^{1-n}\longrightarrow K.\]
\end{Proposition}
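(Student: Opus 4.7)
The plan is to identify the Higgs bundle at the Fuchsian point of the $\sP\sSO_0(n,n)$-Hitchin component, then extend the description over the whole component using the parameterization $\Hit(\sP\sSO_0(n,n))\cong\bigoplus_{j=1}^{n-1}H^0(K^{2j})\oplus H^0(K^n)$ recalled in the preceding remark, together with the Kostant--Hitchin section.

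First, I would verify the splitting claim in the preceding footnote. Under the principal embedding $\iota:\sP\sSL(2,\R)\to\sP\sSO_0(n,n)$, the standard $2n$-dimensional representation decomposes as $V_{2n-2}\oplus V_0$, the direct sum of the $(2n-1)$-dimensional irreducible $\sP\sSL(2,\R)$-module and the trivial one. Since $\iota$ lands in the orthogonal group this decomposition is orthogonal, and the signature $(n,n)$ must split as $(n,n-1)+(0,1)$. Hence $\iota$ factors through the principal $\sP\sSL(2,\R)\hookrightarrow\sSO_0(n,n-1)\subset\sP\sSO_0(n,n)$.

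Second, I would translate this into Higgs bundle data. Applying the $(2n-1)$-dimensional irreducible to the Fuchsian $\sSL(2,\R)$-Higgs bundle $(K^\haf\oplus K^{-\haf},\smtrx{0&0\\1&0})$ and splitting according to the orthogonal signature $(n,n-1)$ yields exactly the Fuchsian data of Proposition \ref{DEF SO(n,n+1) Hitchin comp} (with the roles of $V$ and $W$ interchanged to match the convention here): $V=K^{n-1}\oplus K^{n-3}\oplus\cdots\oplus K^{1-n}$, $W_0=K^{n-2}\oplus K^{n-4}\oplus\cdots\oplus K^{2-n}$, and a principal Higgs field $\eta_0:V\to W_0\otimes K$. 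The trivial factor $V_0$ contributes the $\Oo$ summand so that $W=W_0\oplus\Oo$, and since $\iota$ preserves the block decomposition the remaining component $\eta_{Pf}$ vanishes at the Fuchsian point.

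Third, I would extend over the whole component via the Hitchin section. The differentials $q_2,q_4,\ldots,q_{2n-2}$ form precisely the $\sSO_0(n,n-1)$-Hitchin base, so Proposition \ref{DEF SO(n,n+1) Hitchin comp} already tells us they parameterize the deformations of $(V,W_0,\eta_0)$ within the $\sSO_0(n,n-1)$-Hitchin component. The additional Pfaffian direction $H^0(K^n)$ corresponds to the extra $V_{2n-2}$ in the decomposition $\fso(n,n)\cong\Lambda^2(V_{2n-2}\oplus V_0)$ under the principal $\mathfrak{sl}_2$, namely the one arising from $V_{2n-2}\otimes V_0$. Its unique lowest-weight line lies in the off-block piece $\Hom(V,W)$ and pairs the weight $-(2n-2)$ line $K^{1-n}\subset V$ with $\Oo\subset W$; after twisting by $K$ this becomes a matrix entry $K^{1-n}\to K$, that is, a section of $K^n$, to which the Kostant--Hitchin section assigns the coefficient $q_n$. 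All other entries of $\eta_{Pf}:V\to K$ would correspond to non-lowest-weight vectors in this extra $V_{2n-2}$ and are therefore set to zero by the defining property of the Kostant slice, leaving $\eta_{Pf}=(0,\ldots,0,q_n)$.

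The main obstacle is the last identification: verifying that in the Hitchin section for $\sP\sSO_0(n,n)$ the Pfaffian differential $q_n$ genuinely occupies only the last slot of $\eta_{Pf}$, with the correct normalization, rather than leaking into the other summands of $V$. I would handle this by constructing an $\mathfrak{sl}_2$-triple $(e,h,f)$ adapted to the Step~1 splitting, computing $\ker(\mathrm{ad}\,f)\cap\fso(n,n)$, and checking that its unique extra generator beyond $\ker(\mathrm{ad}\,f)\cap\fso(n,n-1)$ is the off-block matrix unit pairing $K^{1-n}\subset V$ with $\Oo\subset W$; this pins down the assignment up to an overall scalar that is absorbed in the choice of $q_n$.
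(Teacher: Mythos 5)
Your argument is correct and follows essentially the same route the paper intends: the paper states this proposition without a written proof, but its footnote about the principal embedding preserving the splitting $\R^{n,n}=\R^{n,n-1}\oplus\R^{0,1}$, combined with the Kostant--Hitchin section and the decomposition $\fso(2n,\C)\cong\Lambda^2(V_{2n-2}\oplus V_0)=\Lambda^2V_{2n-2}\oplus V_{2n-2}$, is exactly what you elaborate. One cosmetic remark: with the paper's convention that the regular nilpotent is the lower-triangular part and the $q_j$ occupy extremal weight vectors raising the principal grading, the extra entry $K^{1-n}\to\Oo$ has $\mathrm{ad}(h)$-weight $+(2n-2)$ and is therefore the \emph{highest}-weight line of the extra copy of $V_{2n-2}$ rather than the lowest; your identification of the matrix entry itself, and hence of $\eta_{Pf}=(0,\dots,0,q_n)$, is nonetheless correct.
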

\begin{Remark}
	When $n$ is odd, $\sSO_0(n,n)=\sP\sSO_0(n,n)$ and there is only one $\sSO_0(n,n)$-Hitchin component. However, when $n$ is even, $\sSO_0(n,n)$ is a double cover of $\sP\sSO_0(n,n).$ In this case, there are $2^{2g}$ connected components of $\Mm(\sSO_0(n,n))$ which map to the Hitchin component of $\sP\sSO_0(n,n).
	$ 
	These $2^{2g}$ components are distinguished by twisting both $V$ and $W$ from Proposition \ref{DEF: SO(n,n) Hitchin component} by one of the $2^{2g}$ square roots of the trivial bundle. 
\end{Remark}
Hitchin proved the following theorem concerning the connected components of $\Xx(\Gamma,\sP\sSL(n,\R).$ 
\begin{Theorem}
	(\cite{liegroupsteichmuller}) If $\rho\in\Xx(\Gamma,\sP\sSL(n,\R))$ and $n>2$, then either $\rho$ is a Hitchin representation or it can be continuously deformed to a representation with compact Zariski closure. In particular, $\Xx(\Gamma,\sP\sSL(n,\R))$ has three connected components if $n$ is odd and six connected components which come in isomorphic pairs when $n$ is even. 
\end{Theorem}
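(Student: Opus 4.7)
The plan is to transport the question to Higgs bundles via the non-abelian Hodge correspondence (Theorem \ref{THM: NAHC}) and then run Hitchin's Morse-theoretic argument on the moduli space $\Mm(\sP\sSL(n,\R))$. Explicitly, I would introduce the proper, nonnegative function $f\colon \Mm(\sP\sSL(n,\R))\to\R$ defined by $f(E,\Phi)=\|\Phi\|^2_{L^2}$, where the norm is taken with respect to the harmonic metric guaranteed by the correspondence. Since $f$ is proper and bounded below on each connected component, every connected component must contain an absolute minimum of $f$, which is automatically a critical point; classifying connected components thus reduces to locating and classifying absolute minima.

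Next I would characterize critical points. A standard computation shows that $(E,\Phi)$ is critical for $f$ if and only if it is a fixed point of the natural $\C^*$-action $(E,\Phi)\mapsto (E,t\Phi)$, and such fixed points are precisely complex variations of Hodge structure: Higgs bundles admitting a grading $E=\bigoplus_k E_k$ with $\Phi\colon E_k\to E_{k+1}\otimes K$. The central claim to establish is that the only absolute minima on a component are either those with $\Phi\equiv 0$, which correspond to representations with image in the maximal compact subgroup, or those gauge equivalent to the Fuchsian Higgs bundle of Proposition \ref{DEF: SL(n,R) Hitchin component} obtained by setting $q_2=\cdots=q_n=0$, which lies in $\Hit(\sP\sSL(n,\R))$. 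This immediately yields the dichotomy in the statement: every representation can be deformed along the downward gradient flow of $f$ to one of these two types.

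With the minima in hand, I would enumerate components. Those of type $\Phi\equiv 0$ form the moduli of polystable $\sH_\C$-bundles with $\sH$ the maximal compact of $\sP\sSL(n,\R)$, and by Ramanathan's theorem \cite{ramanathan_1975} these are classified by $\pi_1(\sH)$. For $n$ odd, $\sP\sSL(n,\R)=\sSL(n,\R)$ and $\pi_1(\sSO(n))=\Z_2$ (using $n>2$), so there are two such components; together with the single Hitchin component one obtains three components. For $n$ even, the topological invariants of $\sP\sSO(n)$-bundles, the presence of multiple lifts of the Fuchsian locus coming from the $2^{2g}$ square roots of $K$, and the action of the outer automorphism $g\mapsto (g^T)^{-1}$ of $\sP\sSL(n,\R)$ together organize the six connected components into three isomorphic pairs.

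The main obstacle is the minimum classification in the middle paragraph: one must exclude every Hodge bundle structure outside the two allowed families as an absolute minimum. The standard approach is to decompose the deformation complex at a Hodge bundle into weight eigenspaces of the infinitesimal $\C^*$-action and identify the negative part of the Hessian of $f$ with certain first cohomology groups of twisted endomorphism sheaves $E_k^*\otimes E_{\ell}\otimes K^{1+k-\ell}$. Exhibiting a nonvanishing class in some such group for every Hodge bundle other than the two extremal ones is a delicate case analysis that constitutes the technical heart of \cite{liegroupsteichmuller}; this is also precisely where the hypothesis $n>2$ enters, since for $n=2$ the Fuchsian locus already accounts for the maximal Euler class components and no distinct Hitchin component appears.
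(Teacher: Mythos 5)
Your proposal is correct and is precisely the Morse-theoretic argument of Hitchin's paper \cite{liegroupsteichmuller}, which this survey cites for the theorem without reproducing a proof: properness of $f=\|\Phi\|^2_{L^2}$, identification of critical points with Hodge bundles fixed by the $\C^*$-action, the classification of local minima as either $\Phi\equiv 0$ or the Fuchsian (principal $\fsl_2$) Higgs bundle, and the component count via $\pi_1$ of the maximal compact together with the outer automorphism for $n$ even. You also correctly locate the technical heart (excluding intermediate Hodge bundles as minima via the weight decomposition of the deformation complex) and where $n>2$ is used, so there is nothing to add.
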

Thus, for $\sP\sSL(n,\R)$ the connected components of $\Xx(\sP\sSL(n,\R))$ satisfy the following dichotomy. 
\begin{Corollary}\label{COR of Hitchin's PSLnR THM}
	If $\rho\in\Xx(\sP\sSL(n,\R))$ with $n>2$, then we have the following dichotomy: 
	\begin{itemize}
		\item either $\rho$ can be continuously deformed to a representation with compact Zariski closure
		\item or $\rho$ can be continuously deformed to a representation  $\iota\circ\rho_{Fuch}$ where $\rho_{Fuch}:\Gamma\to\sP\sSL(2,\R)$ is a Fuchsian representations and $\iota:\sP\sSL(2,\R)\to\sP\sSL(n,\R)$ is the principal embedding.
		\end{itemize} 
\end{Corollary}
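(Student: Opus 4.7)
The plan is to derive the dichotomy as an essentially immediate consequence of Hitchin's classification of the connected components of $\Xx(\Gamma,\sP\sSL(n,\R))$ stated just above. That theorem already partitions the character variety into two kinds of components: the Hitchin component(s), and the remaining components, in which every representation admits a continuous deformation to one with compact Zariski closure. So the first alternative of the dichotomy is handed to us directly whenever $\rho$ is not Hitchin. What remains is to show that in the Hitchin case, $\rho$ can be deformed inside $\Xx(\Gamma,\sP\sSL(n,\R))$ to a representation of the form $\iota\circ\rho_{Fuch}$.

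For the Hitchin case, I would argue as follows. By the definition of the Hitchin component, it is a connected component of $\Xx(\Gamma,\sP\sSL(n,\R))$ that contains the distinguished class of $\iota\circ\rho_{Fuch}$ for some Fuchsian representation $\rho_{Fuch}$ (see Example \ref{EX: Fuchsian reps}). Under the non-abelian Hodge correspondence of Theorem \ref{THM: NAHC}, this component is homeomorphic to a component of $\Mm(\sP\sSL(n,\R))$, which is (the polystable locus of) a complex analytic variety, hence locally path-connected. Since any connected, locally path-connected space is path-connected, any $\rho$ in the Hitchin component can be joined by a continuous path to $\iota\circ\rho_{Fuch}$, giving the second alternative. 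When $n$ is even one has two isomorphic Hitchin components; the argument is identical in each, since each contains a representative of the form $\iota\circ\rho_{Fuch}$ for an appropriate choice of lift.

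Combining these two cases yields the claimed dichotomy for every $\rho\in\Xx(\Gamma,\sP\sSL(n,\R))$ with $n>2$. There is no real obstacle beyond invoking the preceding theorem and path-connectedness of connected components; the deeper content, namely that outside the Hitchin component every representation can be deformed to one with compact Zariski closure, is already packaged into Hitchin's theorem and need not be re-proved here.
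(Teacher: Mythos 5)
Your proposal is correct and matches the paper's (implicit) argument: the paper derives this corollary directly from Hitchin's theorem with no further proof, exactly as you do, with the second alternative following from connectedness of the Hitchin component together with the fact that it contains $\iota\circ\rho_{Fuch}$ by definition. The only remark is that your detour through the non-abelian Hodge correspondence and local path-connectedness is unnecessary, since the cited theorem already says the Hitchin component is diffeomorphic to a vector space, hence path-connected.
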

	For the other split groups such as $\sSO(n,n+1)$ and $\sSp(2n,\R)$ the situation is more complicated.

\subsection{Deforming into maximal representations}
We now describe how, for $n>2$, the set of maximal $\sSp(2n,\R)$ and $\sSO_0(2,n+1)$ representations can be realized as deformation spaces of Fuchsian representations. This follows from combining the work of \cite{TopInvariantsAnosov} with \cite{HiggsbundlesSP2nR} and \cite{HermitianTypeHiggsBGG}. Note that we have the following isomorphisms $\sP\sSp(2,\R)\cong\sSO_0(2,1)$ and $\sP\sSp(4,\R)\cong\sSO_0(2,3))$. 
Maximal $\sSO_0(2,1)$-representations are exactly Fuchsian representations, and maximal $\sSO_0(2,3)$ representations will be described in the next section. 

\medskip
\noindent\textbf{Maximal $\sSp(2n,\R)$:} Recall from Definition \ref{DEF: Sp(2n,R) Higgs bundle} that an $\sSp(2n,\R)$-Higgs bundle is given by a triple $(V,\beta,\gamma)$ where $V$ is a rank $n$ holomorphic vector bundle, $\beta\in H^0(Sym^2(V)\otimes K)$, and $\gamma\in H^0(Sym^2(V^*)\otimes K).$ Moreover, the Toledo invariant of $(V,\beta,\gamma)$ is given by the degree of $V.$ In this case, the Milnor-Wood $|deg(V)|\leq n(g-1).$ 

It is shown in \cite{sp4GothenConnComp} that, if $(V,\beta,\gamma)$ is a maximal $\sSp(2n,\R)$-Higgs bundle, then $\gamma:V\to V^*\otimes K$ is an isomorphism. Thus, $V\otimes K^{-\haf}$ is a holomorphic rank $n$ orthogonal bundle since
\[\gamma^*\circ\gamma: V\otimes K^{-\haf}\to V^*\otimes K^\haf\]
defines a symmetric isomorphism. The Stiefel-Whitney classes of $V\otimes K^{-\haf}$ give new topological invariants of maximal $\sSp(2n,\R)$-Higgs bundles. In \cite{HiggsbundlesSP2nR} it is shown that for $n>2$ the space of maximal $\sSp(2n,\R)$-Higgs bundles has $3\cdot 2^{2g}$ connected components, $2^{2g}$-Hitchin components and $2^{2g+1}$ components determined by Stiefel-Whitney classes. In particular, we have the following: 
\begin{Theorem}
	(\cite{HiggsbundlesSP2nR}) If $(V,\beta,\gamma)$ is a maximal $\sSp(2n,\R)$-Higgs bundle that is not in a Hitchin component and $n>2,$ then the first and second Stiefel-Whitney classes of $V\otimes K^{-\haf}$ uniquely determine the connected component of $(V,\beta,\gamma)$ in $\Mm(\sSp(2n,\R)).$
\end{Theorem}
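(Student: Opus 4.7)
The plan is to use the Cayley correspondence to translate the question into one about connectedness of a moduli space of twisted orthogonal Higgs pairs, and then use Morse-theoretic techniques to verify the count.

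\textbf{Cayley correspondence.} Since $\gamma:V\to V^*\otimes K$ is an isomorphism for any maximal $\sSp(2n,\R)$-Higgs bundle, $\gamma$ can be viewed as a symmetric isomorphism and $W:=V\otimes K^{-\haf}$ acquires a holomorphic orthogonal structure of rank $n$. Under this identification $\beta\in H^0(\Sym^2 V\otimes K)$ becomes a section
\[
\psi\in H^0(\Sym^2 W\otimes K^2).
\]
First I would verify that the assignment $(V,\beta,\gamma)\mapsto(W,\psi)$ is a bijection between isomorphism classes of maximal $\sSp(2n,\R)$-Higgs bundles and isomorphism classes of $K^2$-twisted $\sO(n,\C)$-Higgs pairs $(W,\psi)$ (with an appropriate notion of polystability inherited from the $\sSp(2n,\R)$ side). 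The $2^{2g}$ Hitchin components correspond on the Cayley side to pairs $(W,\psi)$ in which $\psi$ is nowhere vanishing, i.e., $\psi$ provides an isomorphism $W\cong W^*\otimes K^{-2}$ that forces a prescribed isomorphism type on $W$; thus removing the Hitchin locus removes exactly the components where $\psi$ is an isomorphism.

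\textbf{Topological invariants.} The topological type of a holomorphic orthogonal bundle $W$ of rank $n$ on $X$ is classified by the pair $(sw_1(W),sw_2(W))\in H^1(X,\Z_2)\times H^2(X,\Z_2)$, and these manifestly pull back to invariants of $(V,\beta,\gamma)$. So the Cayley moduli space decomposes as a disjoint union over $(sw_1,sw_2)$, and it remains to show that for each $(sw_1,sw_2)$ the corresponding non-Hitchin stratum is connected.

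\textbf{Connectedness via the Hitchin function.} Here I would appeal to the standard Morse-theoretic strategy. The Hitchin function $f(W,\psi)=\tfrac{1}{2}\|\psi\|^2_{L^2}$ is a proper, non-negative function on the moduli space of $K^2$-twisted orthogonal Higgs pairs with fixed $(sw_1,sw_2)$, its gradient flow deformation retracts the moduli space onto its minimum locus, so connectedness of the minimum locus implies connectedness of the whole stratum. The minima consist either of pairs with $\psi\equiv 0$ (giving the moduli space of polystable orthogonal bundles with fixed Stiefel-Whitney classes, which is connected by Ramanathan's theorem) or of critical points lying on components one can identify with Hitchin components (which we are excluding). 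Putting these facts together yields exactly one connected component per pair $(sw_1,sw_2)$ outside the Hitchin locus, for a total of $2\cdot 2^{2g}$ non-Hitchin components.

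\textbf{Main obstacle.} The hardest step will be a careful analysis of the critical submanifolds of the Hitchin function $f$ on the Cayley side and the verification that, in the range $n>2$, no \emph{additional} critical submanifolds appear beyond the minimum $\psi=0$ and the Hitchin loci. For $n=2$ (i.e.\ $\sSp(4,\R)\cong\sSO_0(2,3)$), extra critical components do appear and produce the exotic components referred to earlier in the paper; the point of the hypothesis $n>2$ is precisely to rule these out. This exclusion uses the stability inequalities on $\sSp(2n,\R)$-Higgs bundles together with the structure of the Hodge-theoretic critical point equations, and is where the bulk of the technical work lies. Once this local analysis is in hand, the global connectedness statement for each $(sw_1,sw_2)$-stratum—and hence the theorem—follows by the deformation retract argument.
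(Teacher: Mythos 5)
The paper never proves this statement: it is quoted directly from \cite{HiggsbundlesSP2nR}, and the surrounding text only supplies the setup (maximality forces $\gamma$ to be an isomorphism, so $W=V\otimes K^{-\haf}$ is orthogonal and its Stiefel--Whitney classes are invariants of the component). Your outline does follow the strategy of that reference --- Cayley correspondence to $K^2$-twisted pairs $(W,\psi)$ with $\psi\in H^0(\Sym^2W\otimes K^2)$, decomposition by $(sw_1,sw_2)$, and a count of components via local minima of the Hitchin function --- so in spirit it is the right route. But one of your concrete claims is false. You characterize the Hitchin components on the Cayley side as those where $\psi$ is nowhere vanishing, ``i.e.\ an isomorphism $W\cong W^*\otimes K^{-2}$.'' Since $\Lambda^nW$ is $2$-torsion, $\det\psi$ lies in $H^0(K^{2n})$, which has no nowhere-vanishing sections for $g\geq 2$; so $\psi$ is \emph{never} an isomorphism, and excising that locus removes nothing. (Already for $n=1$ the Fuchsian Higgs bundle $(K^{\haf},q_2,1)$ has Cayley partner $(\Oo,q_2)$ with $\psi=q_2$ vanishing at $4g-4$ points.) On the Hitchin component one has $W\cong K^{n-1}\oplus K^{n-3}\oplus\cdots\oplus K^{1-n}$; what isolates these components in the Morse-theoretic count is not a nondegeneracy condition on $\psi$ but the fact that the local minima of the Hitchin function with $\psi\neq 0$ are specific Hodge bundles which, when $n>2$, lie only in the Hitchin components.

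That classification of minima is precisely the step you defer to the ``main obstacle,'' and it is the entire content of the theorem: without it the argument yields nothing beyond the a priori decomposition by $(sw_1,sw_2)$. For $n=2$ the extra minima (supported on $W=M\oplus M^{-1}$ with $0<\deg M\leq 4g-4$ and a nonzero component of $\psi$) produce the Gothen components, which is exactly the phenomenon Sections 5--6 of this paper generalize; so the hypothesis $n>2$ must be used in that local analysis and cannot be absorbed into a formal exclusion of a ``Hitchin locus.'' A smaller point: on a singular moduli space you should not assert that the gradient flow of $f$ deformation retracts each stratum onto its minimum locus; the standard argument uses only that $f$ is proper and bounded below, so each connected component contains a local minimum, whence the number of components is bounded above by the number of connected components of the minimum locus. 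With the $\psi=0$ locus connected for each fixed $(sw_1,sw_2)$ and $2^{2g}$ further minima in the Hitchin components, this gives $2^{2g+1}+2^{2g}=3\cdot 2^{2g}$, matching the count stated in the paper.
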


Recall that if $\rho:\Gamma\to\sSL(2,\R)$ is a lift of a Fuchsian representation, then the associated $\sSp(2,\R)$-Higgs bundle is given by $(V,\beta,\gamma)=(K^{\haf},q_2,1)$ where $q_2$ is a holomorphic quadratic differential.
Consider the $\sSp(2n,\R)$-Higgs bundle which is the direct sum of $(K^{\haf},q_2,1)$ with itself $n$-times
\[(V,\beta,\gamma)=(K^\haf\oplus K^\haf\oplus\cdots \oplus K^\haf, q_2\oplus q_2\oplus\cdots\oplus q_2,1\oplus 1\oplus\cdots\oplus 1)~.\]
Note that $deg(V)=n(g-1),$ thus, this defines a maximal $\sSp(2n,\R)$-Higgs bundle. Moreover, if $I_1,\cdots, I_n$ are line bundles satisfying $I_j^2=\Oo$ the following $\sSp(2n,\R)$-Higgs bundle is also maximal
\begin{equation}\label{EQ: twisted Fuchsian}
	(V,\beta,\gamma)=(I_1K^\haf\oplus I_2K^\haf\oplus\cdots \oplus I_nK^\haf, q_2\oplus q_2\oplus\cdots\oplus q_2,1\oplus 1\oplus\cdots\oplus 1)~.
\end{equation}

The bundle $V$ in \eqref{EQ: twisted Fuchsian} satisfies $V\otimes K^{-\haf}=I_1\oplus\cdots\oplus I_n,$ in particular, $V\otimes K^{-\haf}$ is a rank $n$ orthogonal bundle. The total Stiefel-Whitney class of $V\otimes K^{-\haf}$ is given by 
\[1+sw_1(V\otimes K^{-\haf})+sw_2(V\otimes K^{-\haf})=1+\sum\limits_{j=1}^n sw_1(I_j)+\sum\limits_{j<k}sw_1(I_i)\smile sw_1(I_k)~.\] 
In particular, by varying the choices of $I_j$ we can obtain all possible values of the Stiefel-Whitney classes $(sw_1,sw_2)\in H^1(X,\Z_2)\oplus H^2(X,\Z_2).$ This proves the following:

\begin{Theorem}(\cite{TopInvariantsAnosov})
 If $\rho\in\Xx(\Gamma,\sSp(2n,\R))$ is a maximal representation and $n>2,$ then 
	\begin{itemize}
		\item either $\rho$ can be continuously deformed to $\rho_{Fuch}^1\oplus\cdots\oplus \rho_{Fuch}^n$ where $\rho^j_{Fuch}:\Gamma\to\sSL(2,\R)$ is a lift of a Fuchsian representation $\rho_{Fuch}:\Gamma\to\sP\sSL(2,\R)$.
		\item or $\rho$ can be continuously deformed to a lift of $\iota\circ\rho_{Fuch}$ where $\rho_{Fuch}:\Gamma\to\sP\sSL(2,\R)$ is a Fuchsian representation and $\iota:\sP\sSL(2,\R)\to\sP\sSp(2n,\R)$ is the principal embedding.
	\end{itemize}

	\end{Theorem}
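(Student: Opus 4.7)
\emph{Proof proposal.} The plan is to translate the claim into a statement about Higgs bundles via Theorem~\ref{THM: NAHC} and then invoke the component count of \cite{HiggsbundlesSP2nR}: for $n>2$, the moduli space of maximal $\sSp(2n,\R)$-Higgs bundles has $3\cdot 2^{2g}$ components, namely $2^{2g}$ Hitchin components together with $2^{2g+1}$ non-Hitchin components which are uniquely labelled by the pair $(sw_1,sw_2)\in H^1(X,\Z_2)\oplus H^2(X,\Z_2)$ of the orthogonal bundle $V\otimes K^{-\haf}$. Thus the two bullets on the representation side correspond exactly to the Hitchin and the non-Hitchin maximal components on the Higgs bundle side.

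If $\rho$ lies in a Hitchin component, then the explicit parametrisation recalled in Proposition~\ref{DEF SO(n,n+1) Hitchin comp} allows one to scale the higher-degree differentials $q_{2j}$ with $j\geq 2$ continuously to zero within the component. The limit is a Fuchsian Higgs bundle in the sense introduced at the start of Section~\ref{section Hitchin maximal}, whose holonomy representation is a lift of $\iota\circ\rho_{Fuch}$ for the principal embedding $\iota:\sP\sSL(2,\R)\to\sP\sSp(2n,\R)$. This is the second bullet.

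For the non-Hitchin case, the uniqueness of the $(sw_1,sw_2)$-labelling reduces the problem to producing, inside every non-Hitchin maximal component, a direct-sum Fuchsian Higgs bundle of the form \eqref{EQ: twisted Fuchsian} whose orthogonal bundle $V\otimes K^{-\haf}=I_1\oplus\cdots\oplus I_n$ realises the prescribed invariants. Writing $\alpha_j:=sw_1(I_j)\in H^1(X,\Z_2)$, this reduces to the cohomological system
\[\sum_{j=1}^n\alpha_j=sw_1,\qquad \sum_{j<k}\alpha_j\smile\alpha_k=sw_2,\]
and here one exploits that $\alpha\smile\alpha=0$ for every $\alpha\in H^1(X,\Z_2)$ (because $w_1(TX)=0$ on an orientable surface) together with the nondegeneracy of the cup product pairing.

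The main obstacle is precisely this final realisation step, and it is here that the hypothesis $n>2$ is essential: three nontrivial slots $\alpha_1,\alpha_2,\alpha_3$ (with $\alpha_j=0$ for $j\geq 4$) are always enough to hit every $(sw_1,sw_2)$ --- for $sw_1\neq 0$ take $\alpha_1=0$, $\alpha_2=\beta$ and $\alpha_3=sw_1+\beta$ for a $\beta$ with $\beta\smile sw_1=sw_2$, and for $sw_1=0$, $sw_2\neq 0$ use $\alpha_1=a_1$, $\alpha_2=b_1$, $\alpha_3=a_1+b_1$ from a symplectic basis --- whereas with only two nontrivial $I_j$'s one cannot achieve $(0,sw_2\neq 0)$, since $\alpha_1=\alpha_2$ would force $sw_2=\alpha_1^2=0$. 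Once such a direct-sum Higgs bundle has been constructed in every non-Hitchin component, the given $\rho$ must deform within its component to the corresponding sum $\rho^1_{Fuch}\oplus\cdots\oplus\rho^n_{Fuch}$ of lifted Fuchsian representations, yielding the first bullet and completing the proof.
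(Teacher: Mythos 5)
Your proposal is correct and follows essentially the same route as the paper: pass to maximal $\sSp(2n,\R)$-Higgs bundles, use the $3\cdot 2^{2g}$-component count of \cite{HiggsbundlesSP2nR} with the $(sw_1,sw_2)$ labelling of $V\otimes K^{-\haf}$ for the non-Hitchin components, and exhibit a direct sum of twisted Fuchsian Higgs bundles \eqref{EQ: twisted Fuchsian} realizing every value of $(sw_1,sw_2)$. The only difference is that you make explicit the realization step (solving $\sum\alpha_j=sw_1$, $\sum_{j<k}\alpha_j\smile\alpha_k=sw_2$ using $\alpha\smile\alpha=0$ and nondegeneracy of the cup product, which is also where $n>2$ enters), which the paper asserts with ``by varying the choices of $I_j$'' -- your verification of that assertion is correct.
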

\smallskip
\noindent\textbf{Maximal $\sSO_0(2,n)$:} We now focus on maximal $\sSO_0(2,n)$-representations. Recall from Definition \ref{DEF SO(p,q) Higgs bundles} that an $\sSO_0(2,n)$-Higgs bundle is given by a triple $(V,W,\eta)$ where 
\begin{itemize}
	\item $V=L\oplus L^{-1}$ is a rank 2 holomorphic bundle with orthogonal structure $Q_V=\smtrx{0&1\\1&0}:V\to V^*,$
	\item $W$ is a rank $n$ holomorphic bundle with orthogonal structure $Q_W:W\to W^*,$
	\item $\eta=(\beta,\gamma)\in H^0(\Hom(L^{-1},W)\otimes K)\oplus H^0(\Hom(L,W)\otimes K)$
\end{itemize}

The polystable $\sSL(n+2,\C)$ Higgs bundle $(E,\Phi)$ associated to $( L, W,Q_W,\beta,\gamma)$ 
has $E= L\oplus V\oplus L^{-1}$ and, using the notation from section \ref{Section PGL2}, $\Phi$ is given by  
\[\xymatrix{L\ar@/_.5pc/[r]_\gamma&W\ar@/_.5pc/[l]_{\beta^\dagger}\ar@/_.5pc/[r]_{\gamma^\dagger}&L^{-1}\ar@/_.5pc/[l]_\beta},\]
where, as before, we have suppressed the twisting by $K$ from the notation. 
The Toledo invariant of an $\sSO_0(2,n)$-Higgs bundle determined by $(L,W,\beta,\gamma)$ is given by the degree of $L.$ As with $\sSO_0(2,1)$, stability implies the Milnor-Wood inequality $deg(L)\leq 2g-2~.$
\begin{Proposition}\label{p:maximal Higgs bundle Param}
If $( L, W,\beta,\gamma)$ is a polystable $\sSO_0(2,n)$-Higgs bundle with $\deg( L)=2g-2,$ then $ L\cong K  I$ and $ W$ admits a $Q_W$-orthogonal decomposition $ W= I\oplus W_0$ where $ W_0$ is a holomorphic rank $n-1$ bundle and $ I=\Lambda^n W_0$ satisfies $ I^2=\Oo.$ Moreover, 
\[\xymatrix{\gamma\cong\mtrx{1\\0}:K I\to  I K\oplus  W_0\otimes K&\text{and}&\beta=\mtrx{q_2\\\beta_0}:K^{-1} I\to  I K\oplus W_0\otimes K}\] where $q_2\in H^0(K^2)$ and $\beta_0\in H^0(K\otimes I\otimes W_0).$

\end{Proposition}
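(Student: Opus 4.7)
The plan is to exploit the ``Cayley correspondence'' philosophy for maximal Hermitian-type Higgs bundles: saturation of the Milnor--Wood inequality forces extreme rigidity of the off-diagonal components $\beta,\gamma$ of the Higgs field. The argument splits naturally into three steps, in which polystability of the associated $\sSL(n+2,\C)$-Higgs bundle from \eqref{EQ SL(p+q)Higgs ass SO(pq)} does the heavy lifting.

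First I would show that $\gamma:L\to W\otimes K$ is nowhere vanishing. If $\gamma$ vanished on a nonzero effective divisor $D$, one saturates its image to a line subbundle $\tilde L\subset W\otimes K$ of degree $2g-2+\deg D$, and the subbundle $I':=\tilde L\otimes K^{-1}\subset W$ has strictly positive degree. Combining $L\subset V$ with an isotropic flag in $W$ built from $I'$ (and its behaviour under $Q_W$) produces a $\Phi$-invariant subbundle of $E=V\oplus W$ whose slope violates the polystability bound exactly when $\deg L=2g-2$. This is the familiar Milnor--Wood calculation for $\sSO_0(2,n)$, and it forces $D=0$; hence $I:=\gamma(L)\otimes K^{-1}$ is a well-defined line subbundle of $W$ of degree zero.

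Next I would analyse $Q_W|_I$, which is a holomorphic section of the degree-zero line bundle $I^{-2}$ and is therefore either nowhere vanishing or identically zero. A second polystability argument of the same flavour excludes the isotropic case: an isotropic degree-zero line subbundle of $W$ compatible with $\gamma(L)=IK$ yields another destabilizing $\Phi$-invariant flag, incompatible with maximality. Consequently $Q_W|_I:I\to I^{-1}$ is an isomorphism, which gives $I^2=\Oo$ and the orthogonal splitting $W=I\oplus W_0$ with $W_0=I^{\perp}$ of rank $n-1$. Taking determinants and using $\Lambda^n W=\Oo$ together with $I^2=\Oo$ produces $\Lambda^{n-1}W_0=I$.

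The remaining step is bookkeeping in the splitting. Since $\gamma$ is an isomorphism onto $\gamma(L)=IK$, we identify $L\cong KI$; in the decomposition $W\otimes K=IK\oplus W_0\otimes K$ the component $\gamma$ takes the form $(1,0)^T$, with $1\in H^0(\Hom(KI,IK))\cong H^0(\Oo)$ the canonical iso. The remaining component $\beta:K^{-1}I\to IK\oplus W_0\otimes K$ then decomposes as $(\beta_1,\beta_0)^T$ with $\beta_1\in H^0(\Hom(K^{-1}I,IK))\cong H^0(K^2)$, the promised quadratic differential $q_2$, and $\beta_0\in H^0(\Hom(K^{-1}I,W_0\otimes K))$, the second stated component. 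The main obstacle is the polystability arguments in the first two steps: extracting a concrete destabilizing isotropic filtration in the $\sSL(n+2,\C)$-Higgs bundle when $\gamma$ has a zero, or when $I$ is isotropic, requires careful tracking of which subbundles $\Phi$ actually preserves. Once these Cayley-type calculations are in hand, the final identification of $\gamma$ and $\beta$ is direct.
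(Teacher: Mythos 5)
Your argument is correct and lands in the same place as the paper's, but it packages the key step differently. The paper extracts everything from a single object: $\gamma^T\circ\gamma$ is a holomorphic section of the degree-zero line bundle $(L^{-1}K)^2$; polystability is invoked exactly once, to say this section is not identically zero, and then the elementary fact that a nonzero section of a degree-zero line bundle is nowhere vanishing simultaneously gives $(L^{-1}K)^2\cong\Oo$, the nowhere-vanishing of $\gamma$, and the pointwise non-degeneracy of $Q_W$ on the image of $\gamma$. You instead run two separate destabilization arguments, one to kill zeros of $\gamma$ and one to rule out isotropy of $I$. Both do work, and in fact they are the same argument: in either case the relevant line subbundle $I'\subset W$ of degree $e\geq 0$ is forced to be isotropic whenever $e>0$ (a non-isotropic such $I'$ would give a nonzero section of the degree $-2e$ bundle $(I')^{-2}$), and then $L\oplus I'$ is $\Phi$-invariant of degree $2g-2+e>0$ (since $\gamma^\dagger$ kills an isotropic subbundle containing the image of $\gamma$), contradicting semistability of the associated $\sSL(n+2,\C)$-Higgs bundle; so your two cases could be merged. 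The paper's route buys economy — one polystability input plus one degree count; your route buys explicitness — it actually exhibits the destabilizing subbundle behind the paper's unproved assertion that the image of $\gamma$ is not contained in $\ker\gamma^T$. Two small points: you should also dispose of $\gamma\equiv 0$ (then $L$ itself is $\Phi$-invariant of positive degree), and in the final bookkeeping $\Hom(K^{-1}I,W_0\otimes K)\cong W_0\otimes I\otimes K^2$, so $\beta_0\in H^0(W_0\otimes I\otimes K^2)$, consistent with the $\sSO_0(2,3)$ discussion later in the paper.
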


\begin{proof}
If $deg(L)=2g-2$, then polystability implies $\gamma\neq0$ and the image of $\gamma$ is not contained in the kernel of $\gamma^T$. In particular, $\gamma^T\circ\gamma\in H^0((L^{-1}K)^2)\setminus\{0\}.$ 
This implies $( L^{-1}K)^2=\Oo$ and $\gamma$ is nowhere vanishing. Set $ I= L K^{-1}$, then $ L= I K$ and $ I$ defines an orthogonal line subbundle of $W$. 
Taking the $Q_W$-orthogonal complement of $ I$ gives a holomorphic decomposition $ W= I\oplus( I)^\perp$. 
Since $\Lambda^{n} W=\Oo,$ we conclude $ W= I\oplus W_0$ where $ I=\Lambda^n W_0.$
Since the image of $\gamma$ is identified with $ I,$ we can take $\gamma\cong\mtrx{1\\0}:K I\to  I K\oplus  W_0\otimes K$. 
Finally, the holomorphic section $\beta$ of $\Hom( I K^{-1}, I\oplus W_0)\otimes K$ decomposes as
$\beta= q_2\oplus \beta_0$
where $q_2\in H^0(K^2)$ and $\beta_0\in H^0( W_0\otimes I K)$.
\end{proof}

By the above proposition, maximal $\sSO_0(2,n)$-Higgs bundles is determined by a triple $(W_0,\beta_0,q_2)$ where $W_0$ is a rank $n-1$ orthogonal vector bundle. 
Let $\Mm_{sw_1}^{sw_2}(\sSO_0(2,n))$ denotes the space of maximal $\sSO_0(2,n)$-Higgs bundles such that the first and second Stiefel-Whitney classes of $W_0$ are $sw_1$ and $sw_2$; the space of maximal $\sSO_0(2,n)$-Higgs bundles decomposes as 
\begin{equation}
	\label{EQ max SO(2,n) sw decomp}
	\bigsqcup\limits_{\substack{(sw_1,sw_2)\in\\H^1(X,\Z_2)\oplus H^2(X,\Z_2)}}\Mm_{sw_1,sw_2}^{max}(\sSO_0(2,n))~.
\end{equation}
\begin{Remark}
	In \cite{HermitianTypeHiggsBGG} it is proven that, for $n>3$, the spaces $\Mm_{sw_1}^{sw_2}(\sSO_0(2,n))$ are nonempty and connected for each value of $(sw_1,sw_2)\in H^1(X,\Z_2)\oplus H^2(X,\Z_2)$. In particular, the space of maximal $\sSO_0(2,n)$-representations has $2^{2g+1}$ connected components for $n>3.$ 
\end{Remark}

We will now explain how each of the corresponding components of the character variety can be thought of as a deformation spaces of Fuchsian representations. 
Recall that if $\rho_{Fuch}:\Gamma\to\sSO_0(2,1)$ is a Fuchsian representation then the corresponding Higgs bundle is given by 
\[(L,\beta,\gamma)=(K,q_2,1).\]
If $W_0$ is a polystable rank $n-1$ orthogonal bundle with first Stiefel-Whitney class zero, then the $\sSO_0(2,n)$-Higgs bundle given by 
\[(L,W,\beta,\gamma)=(K ~,~ \Oo\oplus W_0~,~(q_2,0)~,~(1,0))\] 
is a maximal Higgs bundle in $\Mm_{sw_1=0}^{sw_2(W_0)}(\sSO_0(2,n))$. The associated representation is 
\[\rho=\rho_{Fuch}\oplus\alpha\] where $\alpha:\Gamma\to\sSO(n-1)$ is the representation associated to the polystable vector bundle $W_0.$ In particular, one can take $W_0=\Oo\oplus\cdots\oplus \Oo$, in which case $\alpha$ will be the trivial representation.

To obtain representations in the connected components with $sw_1\neq0$, consider a Fuchsian representation $\rho_{Fuch}:\Gamma\to\sSO_0(2,1)$ and let $\alpha:\Gamma\to\sO(n)$ be a representation so that the associated flat holomorphic $\sO(n-1,\C)$-bundle has first and second Stiefel-Whitney classes $sw_1$ and $sw_2.$ 
Denote the determinant representation of $\alpha$ by $\Lambda^n\alpha:\Gamma\to\sO(1).$ The Higgs bundle associated to the representation 
\[\rho_{Fuch}\otimes \Lambda^n\alpha\oplus\alpha\]
is 
\[(L,W,\beta,\gamma)=(KI, I\oplus W_0, (q_2,0), (1,0))\]
where the Higgs bundle associated to $\rho_{Fuch}$ is given by $(K,q_2,1)$ and $W_0$ is the flat holomorphic orthogonal bundle associated to $\alpha.$ In particular, the Higgs bundle is in $\Mm_{sw_1}^{sw_2}(\sSO_0(2,n)).$ 
\begin{Theorem}
 	If $n>3,$ then any maximal representation $\rho:\Gamma\to\sSO_0(2,n)$ can be continuously deformed to a representation 
 	\[\rho_{Fuch}\otimes \Lambda^n\alpha\oplus\alpha\]
 	where $\rho_{Fuch}:\Gamma\to\sSO_0(2,1)$ is a Fuchsian representation and $\alpha:\Gamma\to\sO(n-1).$ Moreover, the connected component of $\rho$ is determined by the Stiefel-Whitney classes of $\widetilde S\times_\alpha\sO(n-1).$
 \end{Theorem}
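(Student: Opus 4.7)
The plan is to pass to the Higgs bundle side via the non-abelian Hodge correspondence (Theorem \ref{THM: NAHC}), put the Higgs bundle associated to $\rho$ in the normal form of Proposition \ref{p:maximal Higgs bundle Param}, and then combine the connectedness result of \cite{HermitianTypeHiggsBGG} with the explicit construction sketched just before the theorem to produce the required deformation.

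First, let $\rho:\Gamma\to\sSO_0(2,n)$ be a maximal representation and let $(L,W,\beta,\gamma)$ be the corresponding polystable Higgs bundle. By Proposition \ref{p:maximal Higgs bundle Param} this bundle can be put in the form $(KI,\ I\oplus W_0,\ (q_2,\beta_0),\ (1,0))$ with $I^2=\Oo$ and $W_0$ a rank $(n-1)$ holomorphic orthogonal bundle satisfying $\Lambda^{n-1}W_0=I$. Under the decomposition \eqref{EQ max SO(2,n) sw decomp}, the component of the moduli space containing this Higgs bundle is indexed precisely by the Stiefel-Whitney classes $(sw_1(W_0),sw_2(W_0))$, and Theorem \ref{THM: NAHC} translates this into the corresponding component decomposition of the character variety. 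I would then invoke the result of \cite{HermitianTypeHiggsBGG} recalled in the Remark preceding the theorem: for $n>3$, each component $\Mm_{sw_1,sw_2}^{max}(\sSO_0(2,n))$ is nonempty and connected. This is the principal technical input, and the hypothesis $n>3$ enters exactly at this step, since for $n=3$ additional exotic components appear.

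Finally, to conclude, I would exhibit a representative of the claimed form in each Stiefel-Whitney stratum using the construction preceding the theorem. For fixed $(sw_1,sw_2)\in H^1(X,\Z_2)\oplus H^2(X,\Z_2)$, choose $\alpha:\Gamma\to\sO(n-1)$ so that the associated flat orthogonal bundle $W_0$ realises these classes — every pair is so realised by the topological classification of flat $\sO(n-1)$-bundles on $S$ — and choose a Fuchsian representation $\rho_{Fuch}:\Gamma\to\sSO_0(2,1)$ with Higgs bundle $(K,q_2,1)$. Setting $I=\Lambda^{n-1}W_0$, the triple
\[(L,W,\beta,\gamma)\ =\ (KI,\ I\oplus W_0,\ (q_2,0),\ (1,0))\]
is polystable (polystability is inherited from that of $(K,q_2,1)$ and of $W_0$) and maximal, and its Stiefel-Whitney invariants are precisely $(sw_1,sw_2)$. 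Its associated representation splits, along the orthogonal decomposition of the underlying signature $(2,n)$ bundle, as $\rho_{Fuch}\otimes \Lambda^n\alpha\oplus\alpha$. By the connectedness statement above and the homeomorphism of Theorem \ref{THM: NAHC}, any maximal $\rho$ in the component indexed by $(sw_1,sw_2)$ can be continuously deformed to this explicit model, and the component is identified by these same classes. The main obstacle is the cited connectedness theorem of \cite{HermitianTypeHiggsBGG}; once granted, the remainder is the normal form plus a direct check of characteristic classes.
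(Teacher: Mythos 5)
Your proposal is correct and follows essentially the same route as the paper: the paper's argument is precisely the normal form of Proposition \ref{p:maximal Higgs bundle Param}, the Stiefel--Whitney decomposition \eqref{EQ max SO(2,n) sw decomp}, the connectedness of each $\Mm_{sw_1}^{sw_2}(\sSO_0(2,n))$ for $n>3$ from \cite{HermitianTypeHiggsBGG}, and the explicit model Higgs bundle $(KI,\ I\oplus W_0,\ (q_2,0),\ (1,0))$ associated to $\rho_{Fuch}\otimes\Lambda^n\alpha\oplus\alpha$ realising every pair of invariants. You have correctly identified that the cited connectedness theorem is the only substantive input and that the hypothesis $n>3$ is used exactly there.
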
 


\section{The special case of maximal $\sSO_0(2,3)\cong\sP\sSp(4,\R)$ representations}

The group $\sSp(4,\R)$ is a double cover of $\sSO_0(2,3)$. 
The case of maximal $\sSp(4,\R)$-Higgs bundles behave differently than the general case, similarly, maximal $\sSO_0(2,3)$-Higgs bundles behave differently than maximal $\sSO_0(2,n)$-Higgs bundles. We will focus on $\sSO_0(2,3)$ here since it generalizes $\sSO(2,1)$ and will be generalized in the next section. In particular, we will show that Theorems \ref{THM: Hitchin SO(2,1)}, \ref{THM M0 homotopy type} and \ref{THM Homotopy type sw1} for $\sSO(2,1)$-representations all generalize to maximal $\sSO_0(2,3)$-representations. 

Recall from Proposition \ref{p:maximal Higgs bundle Param}, that a maximal $\sSO_0(2,3)$-Higgs bundle is determined by a triple $(W_0,\beta_0,q_2)$ where $W$ is a holomorphic $\sO(2,\C)$-bundle with $\Lambda^2W_0=I$, $\beta_0\in H^0(W_0\otimes IK^2)$ and $q_2\in H^0(K^2).$ The corresponding $\sSL(5,\C)$-Higgs bundle can be represented schematically as
\begin{equation}
	\label{EQ max SO(2,3) schematic}
	\xymatrix@R=.5em{KI\ar[r]_1&I\ar[r]_1\ar@/_.5pc/[l]_{q_2}&K^{-1}I\ar@/_.5pc/[l]_{q_2}\ar@/^.3pc/[dl]^{\beta_0}\\&W_0\ar@/^.3pc/[ul]^{\beta_0^\dagger}&}~,
\end{equation}
where we again suppress the twisting by $K$ from the notation. When the first Stiefel-Whitney class of $W_0$ vanishes, the structure group of $W_0$ reduces to $\sSO(2,\C).$ In this case, $W_0$ is isomorphic to $M\oplus M^{-1}$ for some line bundle $M$ with $deg(M)\geq0$. 
Furthermore, the holomorphic section $\beta_0$ decomposes as $\beta_0=(\mu,\nu)\in H^0(M^{-1}K^2)\oplus H^0(MK^2).$ Schematically, we have
\begin{equation}
	\label{Eq  Gothen schematic}
	\xymatrix@C=4em@R=0em{KI\ar[r]_1&I\ar[r]_1\ar@/_.5pc/[l]_{q_2}&K^{-1}I\ar@/_.5pc/[l]_{q_2}\ar@/^.3pc/[dl]^{\nu}\ar@/^.5pc/[ddl]^\mu\\&M\ar@/^.3pc/[ul]^{\mu}&\\&M^{-1}\ar@/^.5pc/[uul]^\nu&}.
\end{equation}
If $deg(M)>0$, then such a Higgs bundle is polystable only if $\mu\neq0$. Thus, we have a bound \[0\leq \deg(M)\leq 4g-4~.\]

Analogous to the switching isomorphism from Proposition \ref{Prop: SO(2,1) switching} for $\sSO(2,1)$-Higgs bundles, the $\sSO(2,\C)\times\sSO(3,\C)$ gauge transformation $(g_{V},g_W)$ given by $g_V=-Id_{K\oplus K^{-1}}$ and
\[ g_W=\mtrx{&&-1\\&-1&\\-1&&}~:~ \xymatrix@C=1.5em{M\oplus \Oo\oplus M^{-1}\ar[r]&M^{-1}\oplus \Oo\oplus M}\]
defines an isomorphism between Higgs bundle associated to $(M,q_2,\mu,\nu)$ and the Higgs bundle associated to $(M^{-1},q_2,\nu,\mu).$ Thus we may assume $deg(M)\geq0.$ If $\Mm_{sw_1=0}^{max}(\sSO_0(2,3))$ denotes the space of maximal $\sSO_0(2,3)$-Higgs bundles with vanishing first Stiefel-Whitney class invariant, then we have the following decomposition analogous to \eqref{EQ M(SO(1,2)) decomp}: 
\[\Mm^{max}_{sw_1=0}(\sSO_0(2,3))=\bigsqcup\limits_{0\leq d\leq4g-4}\Mm^{max}_d(\sSO_0(2,3))\]
where $\Mm_d^{max}(\sSO_0(2,3))$ is the space of polystable maximal $\sSO_0(2,3)$-Higgs bundles given by tuples $(M,\mu,\nu,q_2)$ with $deg(M)=d.$
For $d>0$, the following generalization of Hitchin's theorem (Theorem \ref{THM: Hitchin SO(2,1)}) for the components $\Mm_d(\sSO_0(2,3))$ was proven in \cite{mythesis}. 
\begin{Theorem}\label{THM: Gothen component param}
	 For each integer $d\in (0,4g-4]$, the moduli space $\Mm^{max}_d(\sSO_0(2,3))$ is smooth and diffeomorphic to the product of a rank $(d+3g-3)$-vector bundle $\Ff_d$ over the $(4g-4-d)^{th}$-symmetric product $\Sym^{4g-4-d}(X)$ with the vector space $H^0(K^2).$
\end{Theorem}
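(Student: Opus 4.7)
The plan is to adapt the strategy of Hitchin's proof of Theorem~\ref{THM: Hitchin SO(2,1)} for the components $\Mm_d(\sSO(1,2))$, with the main new ingredient being the decoupled parameter $q_2 \in H^0(K^2)$. First, from the schematic \eqref{Eq  Gothen schematic}, a polystable Higgs bundle representing a point of $\Mm^{max}_d(\sSO_0(2,3))$ with $d > 0$ is encoded by a quadruple $(M,\mu,\nu,q_2)$ with $M \in \Pic^d(X)$, $\mu \in H^0(M^{-1}K^2)$, $\nu \in H^0(MK^2)$, and $q_2 \in H^0(K^2)$. Polystability forces $\mu \neq 0$ -- otherwise $M$ itself would be a $\Phi$-invariant subbundle of positive degree inside the associated $\sSL(5,\C)$-Higgs bundle -- and yields the bound $d \leq 4g-4$. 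Let $\widetilde\Ff_d$ denote this parameter space (with $\mu \neq 0$).

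Second, I would identify the residual gauge action. Mirroring the switching isomorphism of Proposition~\ref{Prop: SO(2,1) switching}, after fixing the splittings $V = K \oplus K^{-1}$ and $W = \Oo \oplus M \oplus M^{-1}$, the $\sS(\sO(2,\C)\times\sO(3,\C))$-stabilizer of this data reduces to a $\C^*$ acting by
\[\lambda \cdot (M,\mu,\nu,q_2) \;=\; (M,\, \lambda\mu,\, \lambda^{-1}\nu,\, q_2).\]
Since $q_2$ is gauge-invariant and decoupled from the constraints, one obtains a product decomposition $\Mm^{max}_d(\sSO_0(2,3)) \cong \Ff_d \times H^0(K^2)$, where $\Ff_d := \widetilde\Ff_d/\C^*$. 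The condition $\mu \neq 0$ makes this $\C^*$-action free, so the quotient is smooth.

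Third, I would construct the vector bundle structure via the map $\pi_d : \Ff_d \to \Sym^{4g-4-d}(X)$ sending $[M,\mu,\nu]$ to the zero divisor of $\mu$. This is well-defined because scaling $\mu$ does not alter its divisor, and surjective because $\deg(M^{-1}K^2) = 4g-4-d$. Given $D \in \Sym^{4g-4-d}(X)$, the line bundle $M = K^2\Oo(-D)$ and the projective class of $\mu$ (the canonical section of $\Oo(D)$) are determined, and the remaining freedom is precisely $\nu \in H^0(MK^2) = H^0(K^4\Oo(-D))$. By Riemann--Roch combined with the vanishing $H^0(K^{-3}\Oo(D)) = 0$ (since $\deg K^{-3}\Oo(D) = -(2g-2+d) < 0$ for $d > 0$), this space has constant dimension $d + 3g - 3$. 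Globalizing via the pushforward of $p_X^* K^4 \otimes \Oo(-\Dd)$ along $p_{\Sym} : X \times \Sym^{4g-4-d}(X) \to \Sym^{4g-4-d}(X)$, where $\Dd$ is the universal divisor, endows $\Ff_d$ with the structure of a holomorphic vector bundle of rank $d + 3g - 3$ over $\Sym^{4g-4-d}(X)$.

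The main obstacle will be verifying that the polystability of such $(M,\mu,\nu,q_2)$ with $\mu \neq 0$ upgrades to genuine stability and simplicity of the resulting $\sSO_0(2,3)$-Higgs bundle, which is what guarantees that $\Mm^{max}_d(\sSO_0(2,3))$ is smooth at these points. This amounts to a case-by-case ruling out of destabilizing $\Phi$-invariant subbundles of the $\sSL(5,\C)$-Higgs bundle \eqref{Eq  Gothen schematic}, refining the analysis that underlies Proposition~\ref{p:maximal Higgs bundle Param}.
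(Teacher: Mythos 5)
Your proposal is correct and follows essentially the same route as the paper, which itself only sketches the argument by reference to Hitchin's proof of Theorem~\ref{THM: Hitchin SO(2,1)}: parameterize by tuples $(M,\mu,\nu,q_2)$ with $\mu\neq 0$, quotient by the residual $\C^*$ acting as $(\lambda\mu,\lambda^{-1}\nu)$, and fiber over $\Sym^{4g-4-d}(X)$ via the divisor of $\mu$. Your Riemann--Roch computation of the fiber dimension $d+3g-3$ and the universal-divisor construction of the bundle structure supply details the paper leaves implicit, and your flagged concern about upgrading polystability to stability/simplicity for the smoothness claim is a legitimate point that the paper also does not address explicitly.
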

The proof of above theorem is similar to that of Theorem \ref{THM: Hitchin SO(2,1)}. Namely, one considers the space $\widetilde\Ff_d\times H^0(K^2)$ where
\begin{equation}
	\label{EQ SO(2,3) tilde Fd}\widetilde\Ff_d=\{(M,\mu,\nu)\ |\ M\in\Pic^d(X),\ \mu\in H^0(M^{-1}K^2)\setminus\{0\},\ \nu\in H^0(MK^2)\}~.
\end{equation}
The map $\widetilde\Ff_d\times H^0(K^2)\longrightarrow \Mm_d^{max}(\sSO_0(2,3))$ given by sending a tuple $(M,\mu,\nu,q_2)$ to the isomorphism class of the $\sSO_0(2,3)$-Higgs bundle \eqref{Eq  Gothen schematic} is surjective. Moreover, one checks that two points $(M,\mu,\nu,q_2)$ and $(M',\mu',\nu',q_2')$ in $\widetilde\Ff_d\times H^0(K^2)$ lie in the same $\sSO(2,\C)\times\sSO(3,\C)$-gauge orbit if and only if $M'=M,$ $\mu'=\lambda\mu$,$\nu'=\lambda^{-1}\nu$ and $q_2'=q_2$ for $\lambda\in\C^*$. Now, the result follows just as in the case for $\sSO(2,1).$
\begin{Remark}
	When $d$ is maximal, the moduli space $\Mm^{max}_{4g-4}(\sSO_0(2,3))$ is diffeomorphic to the vector space $H^0(K^4)\times H^0(K^2)$. Since, $\mu\in H^0(M^{-1}K^2)\setminus \{0\}$, so $M=K^2$ and $\nu\in H^0(K^4).$ The associated connected component $\Xx^{max}_{4g-4}(\sSO_0(2,3))$ is the set of $\sSO_0(2,3)$-Hitchin representations. 
\end{Remark}
Theorem \ref{THM M0 homotopy type} and Theorem \ref{THM Homotopy type sw1} for $\Mm(\sSO(2,1))$ both generalize to maximal $\sSO_0(2,3)$ representations. Recall that if $sw_1\in H^1(X,\Z_2)\setminus \{0\}$ and $\pi:X_{sw_1}\to X$ is the corresponding orientation double cover, then the space $\Prym(X_{sw_1},X)\subset\Pic^0(X_{sw_1})$ defined in \eqref{Eq Prym def} has two connected components $\Prym^{sw_2}(X_{sw_1},X)$ labeled by a class $sw_2\in H^2(X,\Z/2)$. 

\begin{Theorem}\label{THM homotopy type singular SO(2,3)}
	Let $X$ be a Riemann surface with genus at least 2. The space of maximal $\sSO_0(2,3)$-Higgs bundles $\Mm^{max}_0(\sSO_0(2,3))$ with vanishing first Stiefel-Whitney class and $d=0$ described above deformation retracts onto $\Pic^0(X)/\Z_2$ where $\Z_2$ acts by inversion. Similarly, the space of maximal $\sSO_0(2,3)$-Higgs bundles $\Mm^{max}_{sw_1,sw_2}(\sSO_0(2,3))$ with  Stiefel-Whitney classes $(sw_1\neq0,sw_2)$ from \eqref{EQ max SO(2,n) sw decomp} deformation retracts onto $\Prym^{sw_2}(X_{sw_1},X)/\Z_2$ where $\Z_2$ acts by inversion. 
\end{Theorem}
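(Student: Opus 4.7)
The plan is to generalize the scaling argument from Theorem \ref{THM M0 homotopy type} and Theorem \ref{THM Homotopy type sw1} to the present setting, with one twist: by Proposition \ref{p:maximal Higgs bundle Param} the maximality of an $\sSO_0(2,3)$-Higgs bundle forces certain ``1''-components in the Higgs field (the identifications $KI\to I\otimes K$ and $I\to K^{-1}I\otimes K$) which cannot themselves be scaled to zero. I would therefore scale only the variable data $(\mu,\nu,q_2)$ when $sw_1=0$, or $(\beta_0,q_2)$ when $sw_1\neq 0$, while simultaneously applying a gauge transformation that absorbs the corresponding scaling on the ``1''-maps, so that the family remains in the standard slice of polystable maximal Higgs bundles.

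For $\Mm^{max}_0(\sSO_0(2,3))$ (the $sw_1=0$, $d=0$ case) a point is a tuple $(M,\mu,\nu,q_2)$ with $M\in\Pic^0(X)$ as in \eqref{Eq Gothen schematic}. I would define
\[
F_\tau(M,\mu,\nu,q_2)=(M,\tau\mu,\tau\nu,\tau q_2),\qquad \tau\in[0,1],
\]
and realize $F_\tau$, up to gauge equivalence, as the composition of the standard $\C^*$-action $\Phi\mapsto t\Phi$ on the Higgs bundle moduli (which preserves polystability) with the $\sSO(2,\C)$-gauge transformation $g_V=\mathrm{diag}(t,t^{-1})$ on $V=KI\oplus K^{-1}I$. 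A direct calculation shows that this combination restores the ``1''-maps to their original values while rescaling $(\mu,\nu,q_2)$ by an additional factor of $t$, so that the total effect is multiplication of the variable data by $t^2$. At $\tau=0$ the Higgs bundle splits as an orthogonal direct sum of the polystable ``Fuchsian-type'' sub-Higgs bundle on $V\oplus I$ (carrying only its ``1''-maps) and the polystable degree-zero orthogonal bundle $M\oplus M^{-1}\subset W$; the switching $\sS(\sO(2,\C)\times\sO(3,\C))$-gauge transformation already used to pass from $(M,q_2,\mu,\nu)$ to $(M^{-1},q_2,\nu,\mu)$ identifies the limit points pairwise, yielding the retract $\Pic^0(X)/\Z_2$.

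For the components with $sw_1\neq 0$, Proposition \ref{Prop Mumford O2} identifies the $\sO(2,\C)$-bundle $W_0$ with an element $M\in\Prym^{sw_2}(X_{sw_1},X)$, and the variable data are now $\beta_0\in H^0(W_0\otimes IK^2)$ together with $q_2\in H^0(K^2)$. The analogous family $(W_0,\beta_0,q_2)\mapsto(W_0,\tau\beta_0,\tau q_2)$ defines the required deformation retraction by the identical $\C^*$-plus-gauge argument. The Mumford-type switching isomorphism $(W_0,\eta)\cong(W_0^*,\eta^\dagger)$ given by the $\sS(\sO(2,\C)\times\sO(3,\C))$-gauge transformation $(g_V,g_W)=(-\mathrm{Id}_V,Q_{W_0})$ translates on the Prym variety to the inversion $M\mapsto\iota^*M=M^{-1}$, yielding the retract $\Prym^{sw_2}(X_{sw_1},X)/\Z_2$.

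The main obstacle I foresee is verifying polystability of $F_\tau$ along the whole path, and in particular at $\tau=0$, where the underlying vector bundle $V\oplus W$ is far from semistable since it contains the subbundle $KI$ of positive degree. This is the reason one cannot simply scale $\Phi$ or the variable data directly; the resolution is the identification of $F_\tau$ with the $\C^*$-action on the moduli space (conjugated by an explicit orthogonal gauge transformation), which preserves polystability and produces a polystable limit by general properties of the Higgs bundle moduli.
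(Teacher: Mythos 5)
Your proposal is correct and takes essentially the same approach as the paper: the paper likewise runs the $\C^*$-family $(E,t\Phi)$ and applies the $\sSO(2,\C)\times\sSO(3,\C)$-gauge transformation $\left(\mathrm{diag}(t,t^{-1}),\mathrm{Id}\right)$ to restore the unit maps, exhibiting the family as $(W_0,t^2q_2,t^2\beta_0)$ with polystable limit $(W_0,0,0)$, and then invokes the switching gauge transformation to obtain the $\Z_2$-quotients $\Pic^0(X)/\Z_2$ and $\Prym^{sw_2}(X_{sw_1},X)/\Z_2$. Your identification of the scaling family with the $\C^*$-action conjugated by this gauge transformation, and your observation that the $t=0$ limit is the orthogonal direct sum of the Fuchsian-type sub-Higgs bundle with the polystable orthogonal bundle $W_0$, are exactly the points the paper uses.
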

\begin{Remark}
	Even though the spaces $\Mm^{max}_0(\sSO_0(2,3))$ and $\Mm_{sw_1,sw_2}^{max}(\sSO_0(2,3))$ are singular, one can still parameterize these singular spaces. Such parameterizations are of course much stronger results than the above theorem, and is carried out in \cite{SO23LabourieConj}.  
\end{Remark}
\begin{proof}
	Recall from Proposition \ref{p:maximal Higgs bundle Param} that a maximal $\sSO_0(2,3)$-Higgs bundle in one of the components $\Mm_{0}^{max}(\sSO_0(2,3))$ or $\Mm_{sw_1,sw_2}^{max}(\sSO_0(2,3))$ is given by a tuple $(W_0,q_2,\beta_0)$ where $W_0$ is an rank two orthogonal bundle with Stiefel-Whitney classes $sw_1$ and $sw_2$, $\beta_0\in H^0(W_0\otimes \Lambda^2W_0\otimes K)$ and $q_2\in H^0(K^2).$ Moreover, if $(W_0,q_2,\beta_0)$ defines a point in $\Mm_0^{max}(\sSO_0(2,3))$ then $W_0=M\oplus M^{-1}$ for some degree $0$ line bundle $M$. Note that, as in section \ref{Section PGL2}, $W_0$ is a polystable vector bundle. 

	The $\sSL(5,\C)$-Higgs bundle associated to a tuple $(W_0,q_2,\beta_0)$ is 
	\[(E,\Phi)=\left(IK\oplus IK^{-1}\oplus I\oplus W_0,\mtrx{ 0&0&q_2&\beta_0^\dagger\\0&0&1&0\\1&q_2&0&0\\0&\beta_0&0&0}\right).\]
	Consider the one parameter family of maximal $\sSO_0(2,3)$-Higgs bundles associated to $(E,t\Phi)$. It is straight forward to check that the $\sSO(2,\C)\times\sSO(3,\C)$ gauge transformation $g_t=\left(\smtrx{t&0\\0&t^{-1}} , \smtrx{1&0\\0&Id}\right)$ of $(KI\oplus K^{-1}I, I\oplus W_0)$ acts on $(E,t\Phi)$ by 
	\[g_t\cdot \left(IK\oplus IK^{-1}\oplus I\oplus W_0,\mtrx{ 0&0&tq_2&t\beta_0^\dagger\\0&0&t&0\\t&tq_2&0&0\\0&t\beta_0&0&0}\right)~=~\mtrx{ 0&0&t^2q_2&t^2\beta_0^\dagger\\0&0&1&0\\1&t^2q_2&0&0\\0&t^2\beta_0&0&0}. \]
	Since $W_0$ is a polystable, the $\sSO_0(2,3)$-Higgs bundle associated to $\lim\limits_{t\to 0}(E,t\Phi)$ is given by $(W_0,0,0).$ 

	If $(W_0,q_2,\beta_0)$ defines a point in $\Mm_0^{max}(\sSO_0(2,3))$ then $W_0=M\oplus M^{-1}$ for a $M\in\Pic^0(X).$ However, as in the proof of Theorem \ref{THM M0 homotopy type}, one cannot distinguish between $M$ and $M^{-1},$ and we conclude that the space $\Mm^{max}_0(\sSO_0(2,3))$ deformation retracts onto $\Pic^0(X)/\Z_2$ where $\Z/2$ acts by inversion. 
	Similarly, when $(W_0,q_2,\beta_0)$ defines a point in $\Mm_{sw_1,sw_2}^{max}(\sSO_0(2,3))$, $W_0$ defines a point in $\Prym(X_{sw_1}^{sw_2}(X_{sw_1},X)$. However, one cannot distinguish between $W_0$ and $W_0^*,$ and we conclude that the space $\Mm^{max}_{sw_1,sw_2}(\sSO_0(2,3))$ deformation retracts onto $\Prym^{sw_2}_{sw_1}(X_{sw_1},X)/\Z_2$.
\end{proof}
Let $\Xx_d^{max}(\Gamma,\sSO_0(2,3))$ and $\Xx_{sw_1,sw_2}^{max}(\Gamma,\sSO_0(2,3))$ be the connected components of the character variety associated to $\Mm_d^{max}(\sSO_0(2,3))$ and $\Mm_{sw_1,sw_2}^{max}(\sSO_0(2,3))$ respectively. The components  
\[\Xx^{max}_0(\Gamma,\sSO_0(2,3))~\sqcup~\Xx^{max}_{4g-4}(\Gamma,\sSO_0(2,3))~\sqcup~\bigsqcup\limits_{\substack{(sw_1\neq0,sw_2)\\\in H^1(X,\Z_2)\oplus H^2(X,\Z_2)}}\Xx_{sw_1,sw_2}^{max}(\Gamma,\sSO_0(2,3))\]
can be thought as deformation spaces of Fuchsian representations, while the remaining connected components $\bigsqcup\limits_{0<d<4g-4}\Xx_d^{max}(\Gamma,\sSO_0(2,3))$ cannot. More precisely, we have the following result which generalizes results of \cite{MaximalSP4} concerning maximal $\sSp(4,\R)$-representations.
\begin{Proposition} (\cite{SO23LabourieConj})\label{Prop: ZariskiClosures of SO23} For $\rho\in\Xx^{max}(\Gamma,\sSO_0(2,3))$ we have the following trichotomy:
\begin{itemize}
	\item If $\rho\in\bigsqcup\limits_{0<d<4g-4}\Xx_d^{max}(\Gamma,\sSO_0(2,3)),$ then $\rho$ is Zariski dense. 
	\item If $\rho\in\Xx_{4g-4}^{max}(\Gamma,\sSO_0(2,3))$, then $\rho$ can be continuously deformed to a representation $\iota\circ\rho_{Fuch}$ where $\rho_{Fuch}:\Gamma\to\sSO_0(2,1)$ is a Fuchsian representation and $\iota:\sSO_0(2,1)\to\sSO_0(2,3)$ is the principal embedding.
	\item Otherwise $\rho\in\Xx_0^{max}(\Gamma,\sSO_0(2,3))\sqcup\Xx_{sw_1,sw_2}^{max}(\Gamma,\sSO_0(2,3))$ and $\rho$ can be continuously deformed to a representation $\rho_{Fuch}\otimes \Lambda^2\alpha\oplus\alpha$ where $\rho_{Fuch}:\Gamma\to\sSO_0(2,1)$ is a Fuchsian representation and $\alpha:\Gamma\to\sO(2).$ Moreover, the Stiefel-Whitney classes of the flat $\sO(2)$-bundle associated to $\alpha$ are the same as the Stiefel-Whitney class invariants of $\rho.$
\end{itemize}
\end{Proposition}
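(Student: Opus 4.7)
The plan is to address the three clauses of the trichotomy separately, in each case using the Higgs-bundle parameterizations of Theorems \ref{THM: Gothen component param} and \ref{THM homotopy type singular SO(2,3)}.

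The second clause is the easiest. The Remark following Theorem \ref{THM: Gothen component param} identifies $\Xx^{max}_{4g-4}(\Gamma,\sSO_0(2,3))$ with the $\sSO_0(2,3)$-Hitchin component. By definition this component contains $\iota\circ\rho_{Fuch}$ where $\iota$ is the principal embedding, and since a connected component is path connected, any $\rho$ in it deforms continuously to such a representation.

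For the third clause, Theorem \ref{THM homotopy type singular SO(2,3)} already exhibits the relevant components as deformation retracting onto the moduli of orthogonal rank-$2$ bundles $W_0$ with the prescribed Stiefel-Whitney invariants $(sw_1,sw_2)$. To upgrade this retraction to a deformation toward a representation of the form $\rho_{Fuch}\otimes \Lambda^2\alpha\oplus\alpha$, I would exhibit explicit Higgs-bundle representatives in each component: starting from Proposition \ref{p:maximal Higgs bundle Param}, set $\beta_0=0$ and take $q_2$ to be a Fuchsian quadratic differential. Then the Higgs bundle splits, as an $\sSO_0(2,3)$-Higgs bundle, into the $I$-twisted $\sSO_0(2,1)$-Higgs bundle $(KI,I,K^{-1}I)$ of $\rho_{Fuch}$ together with the polystable $\sO(2,\C)$-bundle $W_0$, where $I=\Lambda^2 W_0$. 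Identifying the holonomy of each factor via the non-abelian Hodge correspondence and using that tensoring by $I$ corresponds to twisting the $\sSO_0(2,1)$-representation by $\Lambda^2\alpha$ recovers precisely $\rho_{Fuch}\otimes\Lambda^2\alpha\oplus\alpha$, with $\alpha:\Gamma\to\sO(2)$ the orthogonal representation of $W_0$. Since $W_0$ may be chosen to realize every admissible pair $(sw_1,sw_2)$, this produces a model representation inside every component in question, and continuity of the retraction finishes the case.

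The first clause is the main obstacle: proving that \emph{every} $\rho\in\Xx^{max}_d(\Gamma,\sSO_0(2,3))$ with $0<d<4g-4$ has Zariski-dense image. Smoothness of $\Mm^{max}_d(\sSO_0(2,3))$ from Theorem \ref{THM: Gothen component param} implies that every such Higgs bundle is stable, so no strictly polystable degenerations need be considered. Suppose for contradiction the Zariski closure of $\rho(\Gamma)$ were a proper reductive subgroup $H\subsetneq\sSO_0(2,3)$; then the Higgs bundle would reduce to an $H$-Higgs bundle, giving a $\Phi$-invariant splitting of the underlying $\sSL(5,\C)$-Higgs bundle that respects the orthogonal structure. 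I would then run through the conjugacy classes of proper reductive subgroups of $\sSO_0(2,3)$, namely the block-diagonal subgroups $\sSO_0(2,1)\times\sSO(1)$, $\sSO(2)\times\sSO_0(1,2)$, and $\sSO_0(2,2)$, the Hermitian reduction to $\sU(1,1)\hookrightarrow\sSp(4,\R)\to\sSO_0(2,3)$, and the principal $\sP\sSL(2,\R)$. For each candidate $H$, the forced invariant splitting would have to be compatible with the explicit form of $\Phi$ in \eqref{Eq Gothen schematic} and with the non-vanishing of $\mu\in H^0(M^{-1}K^2)$, and a degree count on the summands produces a numerical obstruction precisely when $\deg(M)$ is strictly between $0$ and $4g-4$. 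The most delicate case is the Hermitian reduction to $\sU(1,1)$, since the corresponding Gothen components of maximal $\sSp(4,\R)$-representations from \cite{sp4GothenConnComp} live in the same intermediate degree range; here I would invoke the Zariski-density result for those maximal $\sSp(4,\R)$-components established in \cite{MaximalSP4} to close the argument.
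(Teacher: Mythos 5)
Your treatment of the second and third bullets coincides with what the paper actually does: immediately after the statement it observes that the second point is just the fact that $\Xx_{4g-4}^{max}(\Gamma,\sSO_0(2,3))$ \emph{is} the Hitchin component, and that the third point is a direct corollary of Theorem \ref{THM homotopy type singular SO(2,3)} (whose retraction ends exactly at the model Higgs bundles $(KI, I\oplus W_0,(q_2,0),(1,0))$ with $q_2=\beta_0=0$ constructed in Section 4, i.e.\ at $\rho_{Fuch}\otimes\Lambda^2\alpha\oplus\alpha$). So for those two clauses you are on the paper's route.

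The genuine gap is the first bullet, which is precisely the part the paper does not prove here --- it is deferred entirely to \cite{SO23LabourieConj} --- and your sketch does not close it. Three concrete problems. (i) Your list of proper reductive subgroups of $\sSO_0(2,3)$ is asserted, not derived, and is not obviously complete (e.g.\ it omits reductions of type $\sSO_0(1,2)\times\sSO_0(1,1)$ and non-principal three-dimensional subgroups); the finiteness and tractability of this case analysis normally comes from an external structural input, namely that the Zariski closure of a maximal representation is reductive, of tube type, and tightly embedded, which you never invoke. (ii) The actual content --- ``a degree count on the summands produces a numerical obstruction precisely when $0<\deg(M)<4g-4$'' --- is never executed for even one candidate $H$, and it is exactly at this step that one must use both $\mu\neq 0$ and $\nu$ ``generic'' rather than the shape of $\Phi$ alone, since the locus $\nu=0$ inside $\Mm_d^{max}$ does contain reducible-looking Higgs bundles whose stability must still be checked. (iii) Outsourcing the $\sU(1,1)$ case to \cite{MaximalSP4} is problematic: that reference concerns maximal $\sSp(4,\R)$-representations, so you would still need to transfer Zariski density through the covering $\sSp(4,\R)\to\sP\sSp(4,\R)\cong\sSO_0(2,3)$ and to deal with representations that do not lift; and since the Gothen components upstairs are the exact analogues of the components $\Xx_d^{max}(\Gamma,\sSO_0(2,3))$ in question, citing their Zariski density for one subcase while claiming to prove the others by hand is close to assuming the statement being proved. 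As written, the first bullet remains a citation plus an unexecuted plan rather than a proof.
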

Note that the second point follows since $\Xx_{4g-4}^{max}(\Gamma,\sSO_0(2,3))$ is the Hitchin component. The third point is a direct corollary of Theorem \ref{THM homotopy type singular SO(2,3)}.

\subsection{Deforming maximal $\sSO_0(2,3)$ into $\sSO_0(2,n)$ and $\sSO_0(3,3)$}
As we have seen, the set of maximal $\sSO_0(2,n)$-representations behave differently when $n=3$ compared to $n>3.$ However, for $n>3,$ consider the embedding of $i:\sSO_0(2,3)\to \sSO_0(2,n)$ given by the isometric embedding
\[\xymatrix@R=0em{\R^{2,3}\ar[r]&\R^{2,n}\\(x_1,\cdots,x_5)\ar@{|->}[r]& (x_1,\cdots,x_5,0\cdots,0)}~.\]
Using the notation \eqref{EQ max SO(2,3) schematic}, the induced map from maximal $\sSO_0(2,3)$ Higgs bundles to maximal $\sSO_0(2,n)$-Higgs bundles is given by 
\[\xymatrix@R=.5em{KI\ar[r]_1&I\ar[r]_1\ar@/_.5pc/[l]_{q_2}&K^{-1}I\ar@/_.5pc/[l]_{q_2}\ar@/^.3pc/[dl]^{\beta_0}\\&W_0\ar@/^.3pc/[ul]^{\beta_0^\dagger}&}\xymatrix{\ar@{|->}[rrr]^i&&&}
\xymatrix@C=4em@R=0em{KI\ar[r]_1&I\ar[r]_1\ar@/_.5pc/[l]_{q_2}&K^{-1}I\ar@/_.5pc/[l]_{q_2}\ar@/^.3pc/[dl]^{\beta_0}\\&W_0\ar@/^.3pc/[ul]^{\beta_0^\dagger}&\\&U&}\]
where $U$ is the direct sum of $n-4$ trivial bundles.
In particular, the space $\Mm_{sw_1,sw_2}^{max}(\sSO_0(2,3))$ is mapped into $\Mm_{sw_1,sw_2}^{max}(\sSO_0(2,4)).$ Since the Stiefel-Whitney class invariants $(sw_1,sw_2)$ determine the connected components of $\Mm_{sw_1,sw_2}^{max}(\sSO_0(2,4))$ the components $\Mm_d^{max}(\sSO_0(2,3))$ can be deformed to each other inside $\Mm^{max}(\sSO_0(2,4)).$ More precisely, we have the following. 
\begin{Proposition}\label{Prop: deform so23}
	Let $i:\Mm^{max}(\sSO_0(2,3))\to \Mm^{max}(\sSO_0(2,n))$ be the map induced by the isometric embedding $\R^{2,3}\to\R^{2,n+1}$. For $d$ even, the image of the components $\Mm^{max}_d(\sSO_0(2,3))$ under $i$ are all contained in $\Mm^{max}_{sw_1=0,sw_2=0}(\sSO_0(2,n)),$ while for $d$ odd, the image of the components $\Mm^{max}_d(\sSO_0(2,3))$ under $i$ are all contained in $\Mm^{max}_{sw_1=0,sw_2\neq0}(\sSO_0(2,n)).$   
\end{Proposition}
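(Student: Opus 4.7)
The plan is to unwind the explicit description of the embedding $i$ given by the diagram immediately preceding the proposition, and then to read off the Stiefel--Whitney invariants of the image via the Whitney sum formula. All of the work takes place on the rank $(n-1)$ orthogonal bundle $W_0$ appearing in Proposition \ref{p:maximal Higgs bundle Param}, so this reduces the problem to a characteristic class computation for a single concrete bundle.

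By Theorem \ref{THM: Gothen component param}, a maximal $\sSO_0(2,3)$-Higgs bundle in $\Mm^{max}_d(\sSO_0(2,3))$ has underlying rank $2$ orthogonal bundle
\[
W_0^{old} \;=\; M\oplus M^{-1},\qquad M\in\Pic^d(X),
\]
equipped with the hyperbolic pairing. Reading off the diagram defining $i$, the image is the maximal $\sSO_0(2,n)$-Higgs bundle whose rank $(n-1)$ orthogonal bundle, in the sense of Proposition \ref{p:maximal Higgs bundle Param}, is
\[
W_0^{new} \;=\; W_0^{old}\oplus U \;=\; M\oplus M^{-1}\oplus \Oo^{\oplus(n-3)},
\]
with $U$ the trivial orthogonal bundle of appropriate rank. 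Now I would apply the Whitney sum formula: since $U$ has trivial total Stiefel--Whitney class, $sw_i(W_0^{new})=sw_i(M\oplus M^{-1})$ for $i=1,2$. The bundle $M\oplus M^{-1}$ has structure group $\sSO(2,\C)\cong\C^*$, so $sw_1(W_0^{new})=0$, which already confirms that the image always lies in a $sw_1=0$ stratum of the decomposition \eqref{EQ max SO(2,n) sw decomp}. For the second class, the identification $sw_2\equiv c_1\pmod 2$ for an $\sSO(2)$-bundle gives $sw_2(W_0^{new})\equiv \deg(M)\equiv d\pmod 2$, from which the dichotomy follows: $d$ even forces $sw_2=0$, while $d$ odd forces $sw_2\neq 0$.

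The only mild obstacle is justifying the identification $sw_2(M\oplus M^{-1})\equiv d\pmod 2$. The cleanest way is to note that $M\oplus M^{-1}$, viewed as a topological $\sSO(2)$-bundle via $\sSO(2,\C)\simeq\C^*$, is the oriented real rank $2$ bundle underlying the complex line bundle $M$ (whose Euler number equals $\deg M$), and then to invoke the general fact that $sw_2\equiv e\pmod 2$ for any oriented real rank $2$ bundle. Once this is in hand, every other step is an immediate consequence of the explicit formula for $i$ and the naturality of Stiefel--Whitney classes, so no further analytic or moduli-theoretic input is needed.
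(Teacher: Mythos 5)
Your argument is correct and is essentially the one the paper intends: the embedding simply adds trivial orthogonal summands to the rank-two bundle $W_0=M\oplus M^{-1}$, so the Whitney sum formula reduces everything to the invariants of $M\oplus M^{-1}$, which the paper already records (in Section \ref{Section PGL2}) as $sw_1=0$ and $sw_2\equiv\deg(M)\pmod 2$. Your justification of that last congruence via the Euler class of the underlying oriented real rank-two bundle is a correct filling-in of a fact the paper takes as known, so nothing further is needed.
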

In terms of representations, this says that, for any $n>2,$ every maximal-$\sSO_0(2,3)$ can be continuously deformed in the set of maximal $\sSO_0(2,n)$ to a Fuchsian representation.
On the other hand, if $j:\sSO_0(2,3)\to\sSO_0(3,3)$ is the embedding given by the isometric embedding 
\[\xymatrix@R=0em{\R^{2,3}\ar[r]&\R^{3,3}\\(x_1,\cdots,x_5)\ar@{|->}[r]& (0,x_1,\cdots,x_5)}~,\]
we have the following:

\begin{Proposition}
Let $\rho:\Gamma\to\sSO_0(2,3)$ be a maximal representation and let $j:\Xx(\Gamma,\sSO_0(2,3))\to\Xx(\Gamma,\sSO_0(3,3))$ be the map induced by the embedding described above. If $\rho$ is a Hitchin representation in $\sSO_0(2,3)$, then $j(\rho)$ is a Hitchin representation in $\sSO_0(3,3).$ Otherwise, $j(\rho)$ can be continuously deformed to a representation with compact Zariski closure. 
\end{Proposition}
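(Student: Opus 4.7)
The plan is to work at the level of Higgs bundles. The embedding $j$ sends an $\sSO_0(2,3)$-Higgs bundle $(V,W,\eta)$ (with $V$ of rank $2$ and $W$ of rank $3$) to the $\sSO_0(3,3)$-Higgs bundle $(V\oplus\Oo,W,(\eta,0))$, in which the added summand $\Oo\subset V'=V\oplus\Oo$ is a $Q_{V'}$-orthogonal direct summand annihilated by the Higgs field; equivalently, the underlying $\sSL(6,\C)$-Higgs bundle of $j(\rho)$ splits as $(\Oo,0)$ plus the $\sSL(5,\C)$-Higgs bundle of $\rho$.

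For the Hitchin case I would compare explicit models. By Proposition~\ref{DEF SO(n,n+1) Hitchin comp} with $n=2$, a Hitchin $\sSO_0(2,3)$-Higgs bundle has $V=K\oplus K^{-1}$ and $W=K^2\oplus\Oo\oplus K^{-2}$ parametrised by $(q_2,q_4)\in H^0(K^2)\oplus H^0(K^4)$; applying $j$ yields $V'=K\oplus K^{-1}\oplus\Oo$ and $W'=K^2\oplus\Oo\oplus K^{-2}$. Comparing with Proposition~\ref{DEF: SO(n,n) Hitchin component} for $n=3$, which realises $\sSO_0(3,3)$-Hitchin Higgs bundles from an $\sSO_0(3,2)$-Hitchin triple together with $W=W_0\oplus\Oo$ and a Pfaffian $q_3$: since $\sSO_0(3,2)$-Hitchin triples come from $\sSO_0(2,3)$-Hitchin ones via the $V\leftrightarrow W$ outer involution of $\sSO(3,3)$, and for $n=3$ odd this involution preserves the unique Hitchin component, the image $j(V,W,\eta)$ corresponds to the $\sSO_0(3,3)$-Hitchin point at $(q_2,q_3=0,q_4)$, proving the first claim.

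For the non-Hitchin maximal case, the plan splits along the trichotomy of Proposition~\ref{Prop: ZariskiClosures of SO23}. If $\rho\in\Xx_0^{max}(\Gamma,\sSO_0(2,3))$ or $\rho\in\Xx_{sw_1,sw_2}^{max}(\Gamma,\sSO_0(2,3))$ with $sw_1\ne 0$, then $\rho$ deforms within $\Xx(\Gamma,\sSO_0(2,3))$ to $\rho_{Fuch}\otimes\Lambda^2\alpha\oplus\alpha$ for some $\alpha:\Gamma\to\sO(2)$; composing with $j$ gives a path in $\Xx(\Gamma,\sSO_0(3,3))$ landing in the subgroup $\sO(1)\times\sO(2,1)\times\sO(2)\subset\sO(3,3)$, after which the non-compact $\sO(2,1)$-factor --- which now sits inside $\sSO_0(3,3)$ with an extra positive direction transverse to it --- admits an explicit further deformation, rotating its negative direction into that extra positive direction, ending at a representation with image in the maximal compact $\sO(3)\times\sO(3)$. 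For $\rho$ in a Gothen component $\Xx_d^{max}(\Gamma,\sSO_0(2,3))$ with $0<d<4g-4$, one computes from \eqref{Eq Gothen schematic} that the $\sSO_0(3,3)$ Stiefel--Whitney invariants of $j(\rho)$ are $(sw_2(V'),sw_2(W'))=(0,d\bmod 2)$; for $d$ odd this differs from the Hitchin class $(0,0)$, and hence $j(\rho)$ sits in a non-Hitchin topological class, whose unique ``compact'' component --- provided by the standard Ramanathan-type result in Section~\ref{section background}, since the maximal compact $\sSO(3)\times\sSO(3)$ is semisimple --- must contain $j(\rho)$, assuming that $\sSO_0(3,3)$ carries no exceptional connected components in that class.

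The main obstacle is the Gothen case with $d$ even, where the $\sSO_0(3,3)$ topological invariants of $j(\rho)$ coincide with those of the Hitchin component. To rule out the Hitchin component I would analyse the $\C^*$-scaling $(V\oplus\Oo,W,(t\eta,0))$ as $t\to 0$ in $\Mm(\sSO_0(3,3))$: the trivial summand $(\Oo,0)$ persists under the scaling, while the underlying rank-$5$ Gothen sub-Higgs-bundle has a $\C^*$-limit whose Morse value $\|\Phi\|^2$ is strictly less than the minimum of $\|\Phi\|^2$ attained on the $\sSO_0(3,3)$-Hitchin component, that minimum being realised at nonzero value on the Fuchsian locus of the Hitchin section. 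Consequently $j(\rho)$ must lie outside the Hitchin component, and therefore inside the compact one. Making this Morse-theoretic comparison rigorous is the technical heart of the argument and is carried out in detail in the companion paper \cite{SO23LabourieConj}.
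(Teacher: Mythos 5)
Your handling of the Hitchin case matches the paper's: both rest on comparing Proposition~\ref{DEF SO(n,n+1) Hitchin comp} (for $n=2$) with Proposition~\ref{DEF: SO(n,n) Hitchin component} (for $n=3$), and that part is fine. For the ``otherwise'' clause, however, you have missed the single idea that the paper's proof actually consists of: $\sSO_0(3,3)\cong\sP\sSL(4,\R)$, so Corollary~\ref{COR of Hitchin's PSLnR THM} applies and says that \emph{every} $\sSO_0(3,3)$-representation either lies in a Hitchin component or deforms to a compact representation --- there are no exceptional components at all. This is precisely the statement you flag twice as an unproved hypothesis (``assuming that $\sSO_0(3,3)$ carries no exceptional connected components in that class''), and it is not a minor caveat: it is the crux of the proposition, and it is available for free from the isomorphism. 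Once you have it, your case division by Stiefel--Whitney classes and the ``rotation'' deformation for the reducible representations (which, as written, is too vague --- one cannot continuously rotate a negative direction into a positive one inside $\sSO_0(3,3)$; what works is the $\sSO_0(2,1)\subset\sSO_0(3,1)\cong\sP\sSL(2,\C)$ deformation of the Remark following Theorem~\ref{THM M0 homotopy type}) become unnecessary: the only thing left to check is that $j(\rho)$ is not Hitchin.

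That remaining check is your second genuine gap, concentrated in the Gothen case (and latent even in your $d$ odd case, since the Ramanathan-type proposition of Section~\ref{section background} only guarantees that each topological class contains \emph{a} compact-deformable component, not that it contains nothing else). Your proposed mechanism --- comparing critical values of $\|\Phi\|^2$ along the $\C^*$-flow applied to $(V\oplus\Oo,W,(\eta,0))$ --- is asserted, not executed, and the deferral to \cite{SO23LabourieConj} is not legitimate: that reference concerns the maximal $\sSO_0(2,3)$ components themselves, not $\C^*$-limits inside $\Mm(\sSO_0(3,3))$. Such limits are genuinely delicate here because after adding the trivial summand the underlying bundle is unstable, so the naive limit $(E,0)$ does not exist (compare the case analysis forced on the author in the proof of Proposition~\ref{Prop deform nn+2}). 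A clean way to close the gap without any Morse theory: by Proposition~\ref{Prop: ZariskiClosures of SO23} a Gothen representation with $0<d<4g-4$ is Zariski dense in $\sSO_0(2,3)$, so $j(\rho)$ has Zariski closure $j(\sSO_0(2,3))\cong\sP\sSp(4,\R)\subset\sP\sSL(4,\R)$; since the $\sP\sSL(4,\R)$-Hitchin representations contained in $\sP\sSp(4,\R)$ are exactly the $\sP\sSp(4,\R)\cong\sSO_0(2,3)$-Hitchin representations and $\rho$ is not one, $j(\rho)$ is not Hitchin, and the dichotomy of Corollary~\ref{COR of Hitchin's PSLnR THM} finishes the proof.
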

\begin{proof}
	By Proposition \ref{DEF: SO(n,n) Hitchin component}, the $\sSO_0(3,3)$-Hitchin component can be interpreted as the deformation space of the image of the $\sSO_0(2,3)$-Hitchin component under the map $j.$ For the second point, recall that the Lie groups $\sSO_0(3,3)$ and $\sP\sSL(4,\R)$ are isomorphic. Hence the result follows from Corollary \ref{COR of Hitchin's PSLnR THM} of Hitchin's theorem. 
\end{proof}

\section{Generalizing maximal $\sSO_0(2,3)$ representations to $\sSO_0(n,n+1)$ and $\sSO_0(n,n+2)$}\label{section DEFORMATIONS}

In the previous section we saw how many of Hitchin's results for the $\sSO(1,2)$-Higgs bundles had generalizations to the set of maximal $\sSO_0(2,3)$-Higgs bundles. These generalizations followed from the extra symmetries maximality imposed on the Higgs field. Moreover, using the low dimensional isomorphism $\sSO_0(2,3)=\sP\sSp(4,\R)$, we saw that the special features for maximal $\sSp(4,\R)$-Higgs bundles do not generalize to $\sSp(2n,\R).$ 
In this section, we will discuss some results from \cite{mythesis} and \cite{SOnn+1PosRepsHiggs} which show that the special features of the maximal $\sSO_0(2,3)$-Higgs bundles have generalizations in the space of $\sSO_0(n,n+1)$-Higgs bundles. In this case, there is no known ``topological invariant'' which distinguishes these generalizations. The following theorem is the main result.

\begin{Theorem}\label{THM SOnn+1comp}
	For each integer $d\in (0,n(2g-2)]$, there is a connected component 
	\[\Mm_d(\sSO_0(n,n+1))\subset\Mm(\sSO_0(n,n+1)\] which is smooth and diffeomorphic to the product of a rank $(d+(2n-1)(g-1))$-vector bundle $\Ff_d$ over the $(n(2g-2)-d)^{th}$-symmetric product $\Sym^{n(2g-2)-d}(X)$ with the vector space $\bigoplus\limits_{j=1}^{n-1}H^0(K^2).$
\end{Theorem}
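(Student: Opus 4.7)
\medskip
\noindent\textbf{Proof proposal.} The plan is to directly extend the argument of Theorem \ref{THM: Gothen component param}, replacing the top Hitchin differential $q_{2n}$ with data attached to a line bundle $M$ of degree $d$. First I would establish a normal form: I expect every $(V,W,\eta) \in \Mm_d(\sSO_0(n,n+1))$ to admit a representative with
\[
V = K^{n-1} \oplus K^{n-3} \oplus \cdots \oplus K^{3-n} \oplus K^{1-n},
\]
\[
W = M \oplus K^{n-2} \oplus K^{n-4} \oplus \cdots \oplus K^{2-n} \oplus M^{-1},
\]
where $M \in \Pic^d(X)$, the orthogonal structure pairs $K^j$ with $K^{-j}$ and $M$ with $M^{-1}$, and $\eta:V \to W \otimes K$ carries the same subdiagonal $1$'s as in the Hitchin normal form of Proposition \ref{DEF SO(n,n+1) Hitchin comp}. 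The remaining entries are the lower Hitchin differentials $q_{2j} \in H^0(K^{2j})$ for $j = 1, \ldots, n-1$, together with $\mu \in H^0(M^{-1}K^n)$ (the map $K^{1-n} \to M^{-1}K$ which takes over the role of $q_{2n}$) and $\nu \in H^0(MK^n)$ (the map $K^{1-n} \to MK$).

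Next, polystability of the associated $\sSL(2n+1,\C)$-Higgs bundle forces $\mu \neq 0$ when $d > 0$, yielding the bound $d \leq n(2g-2)$ and the divisor $\divv(\mu) \in \Sym^{n(2g-2)-d}(X)$. Setting
\[
\widetilde \Ff_d = \{(M,\mu,\nu) \mid M \in \Pic^d(X),\ \mu \in H^0(M^{-1}K^n) \setminus \{0\},\ \nu \in H^0(MK^n)\},
\]
the obvious assignment defines a surjection
\[
\widetilde \Ff_d \times \bigoplus_{j=1}^{n-1} H^0(K^{2j}) \longrightarrow \Mm_d(\sSO_0(n,n+1)).
\]
As in the $\sSO_0(2,3)$ case, the only gauge equivalences among tuples of this form come from a $\C^*$-action rescaling $(\mu,\nu) \mapsto (\lambda \mu, \lambda^{-1}\nu)$ and fixing $M$ and the $q_{2j}$; this $\C^*$ is the unique diagonal automorphism of $V$ (and the induced one on $W$) compatible with both the orthogonal structure and the rigid subdiagonal entries.

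Passing to $\Ff_d = \widetilde \Ff_d / \C^*$, the map $[M,\mu,\nu] \mapsto \divv(\mu)$ realizes $\Ff_d$ as a fibration over $\Sym^{n(2g-2)-d}(X)$: for $D$ a divisor of degree $n(2g-2)-d$, one recovers $M \cong K^n(-D)$ and $\mu$ up to scalar, so the fiber becomes $H^0(MK^n) = H^0(K^{2n}(-D))$. Since $\deg K^{2n}(-D) = d + n(2g-2) > 2g-2$ for $d > 0$, Riemann--Roch gives the fiber constant dimension $d + (2n-1)(g-1)$, matching the claimed rank. That the image is a smooth connected component of $\Mm(\sSO_0(n,n+1))$ then follows from unobstructed deformation theory at stable Higgs bundles, together with openness of stability and closedness of the normal form under limits within its topological type.

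The main obstacle is the normal-form step. The analogous statement for maximal $\sSO_0(2,n)$ Higgs bundles (Proposition \ref{p:maximal Higgs bundle Param}) used the maximality of the Toledo invariant to force $\gamma$ to be an isomorphism, but $\sSO_0(n,n+1)$ is not of Hermitian type, so no such global topological invariant is available. A substitute must instead be obtained by a careful iterative polystability analysis propagating along the full chain of $K$-powers in $V$ and $W$, forcing the $M \oplus M^{-1}$ factor to appear at the extremes. This is the novel content distinguishing the present theorem from the Hermitian-type results of Section \ref{section Hitchin maximal}.
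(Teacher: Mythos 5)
Your parameterization half matches the paper's: the same space $\widetilde\Ff_d=\{(M,\mu,\nu)\}$, the same residual $\C^*$-action $(\mu,\nu)\mapsto(\lambda\mu,\lambda^{-1}\nu)$, the same fibration of $\Ff_d=\widetilde\Ff_d/\C^*$ over $\Sym^{n(2g-2)-d}(X)$ with fiber $H^0(K^{2n}(-D))$, and your Riemann--Roch count of the rank $d+(2n-1)(g-1)$ is correct. The gap is in how you identify the image as a connected component. You propose to first prove a normal-form theorem for every polystable Higgs bundle in $\Mm_d(\sSO_0(n,n+1))$ by an ``iterative polystability analysis,'' and to obtain closedness ``under limits within its topological type.'' This cannot work as stated: the paper stresses that the only topological invariants available here are two second Stiefel--Whitney classes, and that these do \emph{not} separate the components $\Mm_d$ from one another or from the rest of the moduli space. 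So a limit of normal-form Higgs bundles taken within a fixed topological type has no a priori reason to remain in normal form, and the normal-form statement itself is circular until the locus in question is already known to be open and closed. In the $\sSO_0(2,3)$ case the normal form of Proposition \ref{p:maximal Higgs bundle Param} was extracted from maximality of the Toledo invariant, i.e.\ from a genuine topological constraint; you correctly observe that no such constraint exists for $n\geq 3$, but you defer rather than supply the substitute, and that substitute is precisely the content of the theorem.

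The paper's route avoids the normal-form theorem entirely. It constructs the map $\Psi_d$ from $\Ff_d\times\bigoplus_{j=1}^{n-1}H^0(K^{2j})$ into $\Mm(\sSO_0(n,n+1))$ by gluing the data $(\mu,\nu)$ onto an $\sSO_0(n-1,n)$-Hitchin-component Higgs bundle, verifies polystability of the image and injectivity up to the $\C^*$-action directly, and then proves the image is \emph{closed} using properness of the Hitchin fibration (a divergent sequence in the parameter space maps to a divergent sequence in the Hitchin base, so any convergent sequence in the image has its limit in the image) and \emph{open} via a dimension count (the image is smooth of the expected dimension $\mathrm{dim}(\sG)(2g-2)$). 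These two steps are what replace your missing normal-form argument; without them, or without actually carrying out the polystability analysis you postpone, the proposal does not close.
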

\begin{Corollary}
	When the integer invariant $d$ is maximal (i.e. $d=n(2g-2)$), the above theorem recovers Hitchin's parameterization of the $\sSO_0(n,n+1)$-Hitchin component.
\end{Corollary}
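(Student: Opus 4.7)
The plan is to substitute $d = n(2g-2)$ into Theorem~\ref{THM SOnn+1comp} and verify that the resulting parameterization matches the one listed after Proposition~\ref{DEF SO(n,n+1) Hitchin comp}. At this maximal value the base $\Sym^{n(2g-2)-d}(X) = \Sym^0(X)$ is a single point, so $\Mm_{n(2g-2)}(\sSO_0(n,n+1))$ is diffeomorphic to the single fiber $\Ff_{n(2g-2)}|_{\mathrm{pt}}$ of rank $(4n-1)(g-1)$ together with $\bigoplus_{j=1}^{n-1} H^0(K^{2j})$. By Riemann-Roch, $\dim H^0(K^{2j}) = (4j-1)(g-1)$, and a direct computation shows $(4n-1)(g-1) + \sum_{j=1}^{n-1}(4j-1)(g-1) = n(2n+1)(g-1)$, which agrees with the complex dimension of $\Hit(\sSO_0(n,n+1))$.

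Next I would identify the fiber $\Ff_{n(2g-2)}|_{\mathrm{pt}}$ explicitly with $H^0(K^{2n})$. Extrapolating the pattern from Theorems~\ref{THM: Hitchin SO(2,1)} and~\ref{THM: Gothen component param}, the data underlying the construction of $\Ff_d$ should consist of a line bundle $\Ll$ of degree $d$ together with sections $\mu \in H^0(\Ll^{-1}\otimes K^n)\setminus\{0\}$ and $\nu \in H^0(\Ll\otimes K^n)$. At the extreme $d = n(2g-2)$ the bundle $\Ll^{-1}\otimes K^n$ has degree zero, and the nonvanishing of $\mu$ forces $\Ll = K^n$; the remaining free parameter $\nu$ then lies in $H^0(K^{2n})$. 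Combined with the factor $\bigoplus_{j=1}^{n-1}H^0(K^{2j})$, this recovers Hitchin's parameterization $\bigoplus_{j=1}^{n} H^0(K^{2j})$.

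To confirm this connected component is $\Hit(\sSO_0(n,n+1))$ rather than some other component of the same dimension, I would exhibit a Hitchin representation inside it. Setting all free differentials $q_{2j}$ to zero produces the Higgs bundle from Proposition~\ref{DEF SO(n,n+1) Hitchin comp} with trivial differentials, which is the image of a Fuchsian representation under the principal embedding $\iota\colon \sP\sSL(2,\R)\to \sSO_0(n,n+1)$. This Higgs bundle lies in $\Hit(\sSO_0(n,n+1))$ by definition, and since Theorem~\ref{THM SOnn+1comp} guarantees that $\Mm_{n(2g-2)}(\sSO_0(n,n+1))$ is a connected component (and the Hitchin component is connected), the two must coincide.

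The main obstacle is pinning down the explicit form of $\Ff_d$ at $d = n(2g-2)$, including which specific holomorphic section plays the role of $\nu$ in the higher-rank Higgs field. In the low-rank cases $n=1,2$ the identification is essentially immediate from Remark~\ref{Remark: Fuchsian Higgs bundles} and the paragraph following Theorem~\ref{THM: Gothen component param}; for general $n$ it amounts to tracking the normal form of the Higgs field used to build $\Ff_d$ in \cite{SOnn+1PosRepsHiggs} and verifying that the resulting $H^0(K^{2n})$-summand indeed corresponds to the top differential $q_{2n}$ in the Hitchin parameterization.
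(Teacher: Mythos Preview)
Your proposal is correct and matches the paper's implicit argument. The paper states the corollary without a separate proof, but the proof sketch of Theorem~\ref{THM SOnn+1comp} that immediately follows defines $\widetilde\Ff_d=\{(M,\mu,\nu)\ |\ M\in\Pic^d(X),\ \mu\in H^0(M^{-1}K^n)\setminus\{0\},\ \nu\in H^0(MK^n)\}$ exactly as you extrapolated, so the ``main obstacle'' you flag dissolves: at $d=n(2g-2)$ the nonvanishing of $\mu$ forces $M=K^n$ and $\nu\in H^0(K^{2n})$, recovering $\bigoplus_{j=1}^n H^0(K^{2j})$.
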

\begin{Remark}
	 For $n\geq2,$ $\sSO_0(n,n+1)$ is not of Hermitian type and the topological invariants associated to an $\sSO_0(n,n+1)$-representation are two second Stiefel-Whitney classes. In particular, if $\Xx_d(\Gamma,\sSO_0(n,n+1))$ denotes the connected component of the character variety corresponding to $\Mm_d(\sSO_0(n,n+1),$ then these invariants do not distinguish the components $\Xx_d(\Gamma,\sSO_0(n,n+1))$. In fact, these are the first examples of non-Hitchin and non-maximal connected components of character varieties $\Xx(\Gamma,\sG)$ which are not distinguished by a topological invariant $\omega\in\pi_1(\sG).$ 
\end{Remark}
We start by considering a natural generalization of the space $\widetilde\Ff_d$ from \eqref{EQ SO(2,3) tilde Fd}. Consider the space 
\[\widetilde\Ff_d=\{(M,\mu,\nu)\ |\ M\in\Pic^d(X),\ \mu\in H^0(M^{-1}K^n)\setminus\{0\},\ \nu\in H^0(MK^n)\}~.\]
Note that the condition $\mu\in H^0(M^{-1}K^n)\setminus\{0\}$ implies that the integer $d$ satisfies the bound 
\[0\leq d\leq n(2g-2).\]

Associated to a point in $\widetilde\Ff_d\times\bigoplus\limits_{j=1}^{n-1}H^0(K^{2j})$ we can construct an $\sSO_0(n,n+1)$-Higgs bundles by  
\[(M,\mu,\nu,q_2,\cdots,q_{2n-2})\longrightarrow \left(V~,~W_0\oplus M\oplus M^{-1}~,~\mtrx{\eta_0\\\alpha\\\beta}\right)\]
where $(V,W_0,\eta_0)$ is the $\sSO_0(n-1,n)$-Higgs bundle in the Hitchin component associated to $(q_2,\cdots,q_{2n-2})$ (see Proposition \ref{DEF SO(n,n+1) Hitchin comp}) and 
\[\mtrx{\alpha\\\beta}:V=K^{n-1}\oplus K^{n-3}\oplus\cdots K^{3-n}\oplus K^{1-n}\longrightarrow (M\oplus M^{-1})\otimes K\]
is given by
\[\xymatrix{\alpha=(0,\cdots,0,\nu)&\text{and}&\beta=(0,\cdots,0,\mu)}~.\]
Moreover, one can show that, for $d>0$, such a Higgs bundle defines a {\em polystable} $\sSO_0(n,n+1)$-Higgs bundle. 
Hence, we have a map $\widetilde\Psi_d$  from the set $\widetilde\Ff_d\times\bigoplus\limits_{j=1}^{n-1}H^0(K^{2j})$ to the set of polystable $\sSO_0(n,n+1)$-Higgs bundles. The next step is to show that the only $\sSO(n,\C)\times\sSO(n+1,\C)$-gauge transformations which preserve the image of $\widetilde\Psi_d$ act by 
\[\widetilde\Psi_d(M,\mu,\nu,q_2,\cdots,q_{2n-2})\longrightarrow \widetilde\Psi_d(M,\lambda\mu,\lambda^{-1}\nu,q_2,\cdots,q_{2n-2})\] 
for $\lambda\in\C^*.$ This is done directly. 

The map $\widetilde\Psi_d$ therefore descends to a map
\[\Psi_d:\Ff_d\times (\bigoplus\limits_{j=1}^{n-1} H^0(K^{2j}))\longrightarrow \Mm(\sSO_0(n,n+1))\]
where $\Ff_d$ is the rank $(d+(2n-1)(g-1))$-vector bundle over $\Sym^{n(2g-2)-d}(X)$ given by 
\[\Ff_d=\widetilde\Ff_d/\C^*\]
where $\C^*$ acts by $\lambda\cdot(M,\mu,\nu)=(M,\lambda\mu,\lambda^{-1}\nu).$ 

For the $\sSO_0(1,2)$ and $\sSO_0(2,3)$ cases, we arrived at the above description via restrictions given by certain topological invariants. In the general case, we do not have these topological invariants, so we must show $\Psi_d$ is open and closed. 
To show that the image of $\Psi_d$ is closed in $\Mm(\sSO_0(n,n+1))$ we use the properness of the Hitchin fibration. 
Namely, if a sequence diverges in the parameter space $\Ff_d\times\bigoplus\limits_{j=1}^{n-1} H^0(K^{2j}),$ then the corresponding points in the Hitchin base associated to the image of $\Psi_d$ will also diverge. To finish the argument, note that, by a simple dimension count, image of $\Psi_d$ is the expect dimension of the moduli space. 

\begin{Remark}\label{REM M0 component SOn,n+1}
	In fact, it is shown in \cite{SOnn+1PosRepsHiggs} that all of the components of $\Mm(\sSO(1,2))$ generalize to $\sSO(n,n+1).$ In particular, there is also a connected component $\Mm_0(\sSO_0(n,n+1))$ of $\Mm(\sSO_0(n,n+1))$ which corresponds to the above integer $d$ being zero. 
	Moreover, if $\Xx_0(\Gamma,\sSO_0(n,n+1))$ is the connected component of the character variety associated to $\Mm_0(\sSO_0(n,n+1)$, then one can show that every representation $\rho\in\Xx_0(\Gamma,\sSO_0(n,n+1)$ {\em cannot} be deformed to a compact representation, but can be continuously deformed to a representation 
	\[(\iota\circ\rho_{Fuch})\oplus\alpha~.\]
	Here $\rho_{Fuch}:\Gamma\to\sP\sSL(2,\R)$ is a Fuchsian representation, $\iota:\sP\sSL(2,\R)\to\sSO_0(n,n-1)$ is the principal embedding and $\alpha:\Gamma\to\sSO(2).$ This should be interpreted as a result analogous to Proposition \ref{Prop: ZariskiClosures of SO23}. 
	Generalizing the notation of \eqref{EQ max SO(2,3) schematic}, the $\sSO_0(n,n+1)$-Higgs bundle associated $(\iota\circ\rho_{Fuch})\oplus\alpha$ is given by 
	\[\xymatrix@R=.3em{K^{n-1}\ar[r]_1&K^{n-2}\ar[r]_1&\cdots\ar[r]_1&K\ar[r]_1&I\ar[r]_1&K^{-1}\ar[r]_1&\cdots\ar[r]_1&K^{2-n}\ar[r]_1&K^{1-n}\\&&&&M&&&&\\&&&&M^{-1}&&&&}~,\]
	where $M$ is a holomorphic line bundle of degree zero. 
	
	Let $\Xx_d(\Gamma,\sSO_0(n,n+1))$ be the connected component of $\Xx(\Gamma,\sSO_0(n,n+1))$ which corresponds to $\Mm_d(\sSO_0(n,n+1)$. For representations in $\Xx_d(\Gamma,\sSO_0(n,n+1),$ there are no obvious model representations to deform to. It is most likely that, for $0<d<n(2g-2)$, all the representations in the components $\Xx_d(\Gamma,\sSO_0(n,n+1)$ are Zariski dense.
\end{Remark}

\subsection{Deforming $\sSO_0(n,n+1)$ into $\sSO_0(n,n+2)$}
To conclude, we show that, analogous to Proposition \ref{Prop: deform so23}, all of the connected components $\Mm_d(\sSO_0(n,n+1))$ described above can be deformed into each other in the space $\Mm(\sSO_0(n,n+2)).$ We will do this by constructing explicit deformations. 
The ideas in the analysis below are similar to how the Morse flow works in the $\sSO_0(1,3)=\sP\sSL(2,\C)$-Higgs bundles moduli space. These ideas play an essential role in \cite{SOpqStabilityAndMinima} where we describe the connected components of $\Mm(\sSO(p,q)).$ However, in the general $\sSO(p,q)$ case, the arguments become much more complex. For clarity and notational convenience, we will describe how this works in the case $n=3.$

Let $i:\sSO_0(3,4)\to\sSO_0(3,5)$ be the embedding induced by the isometric embedding 
\[\xymatrix@R=0em{\R^{3,4}\ar[r]^i&\R^{3,5}\\(x_1,\cdots,x_7)\ar@{|->}[r]&(x_1,\cdots,x_7,0)}~.\]
This induces a map from $\sSO_0(3,4)$-Higgs bundles to $\sSO_0(3,5)$-Higgs bundles given by sending $(V,W,\eta)$ to $(V,W\oplus\Oo,\smtrx{\eta\\0}).$ 
For a fixed $d>0,$ consider a Higgs bundle in the connected component $\Mm_d(\sSO_0(3,4))$ given by 
\begin{equation}
	\label{Eq Higgs field eta}
	\xymatrix@C=1em{V=K^2\oplus\Oo\oplus K^{-2}&W=M\oplus K\oplus K^{-1}\oplus M^{-1}&\eta=\smtrx{0&0&0\\1&0&0\\0&1&0\\0&0&\mu}:V\to W\otimes K}
\end{equation}
where $M\in\Pic^d(X)$ and $\mu\in H^0(M^{-1}K^3)\setminus\{0\}.$ 

For each $\epsilon\in\Omega^{0,1}(X,M^{-1})$ with $[\epsilon]\in H^1(M^{-1})\setminus\{0\}$, consider the holomorphic structure on the {\em smooth bundle} $M\oplus M^{-1}\oplus \Oo$ given by 
\[\bar\p_\epsilon=\mtrx{\bar\p_M&&\\&\bar\p_{M^{-1}}&\epsilon\\-\epsilon&&\bar\p_\Oo}\in\Omega^{0,1}(X,\End(M\oplus M^{-1}\oplus\Oo))~.\]

Note that the orthogonal structure $Q=\smtrx{0&1&0\\1&0&0\\0&0&1}$ on $M\oplus M^{-1}\oplus\Oo$ is holomorphic with respect to $\bar\p_\epsilon$. Note also that, if $\eta$ is given by \eqref{Eq Higgs field eta}, then the Higgs field $\smtrx{\eta\\0}:V\to W\oplus \Oo$ is holomorphic with respect to the holomorphic structures
\[\bar\p_V=\smtrx{\bar\p_{K^2}&&\\&\bar\p_\Oo&\\&&\bar\p_{K^{-2}}}\ \ \ \ \ \ \text{and}\ \ \ \ \ \ \bar\p^\epsilon_{W\oplus\Oo}=\smtrx{\bar\p_M&&&&\\&\bar\p_{K}&&&\\&&\bar\p_{K^{-1}}&&\\&&&\bar\p_{M^{-1}}&\epsilon\\-\epsilon&&&&\bar\p_\Oo }~. \]
Hence, $(\bar\p_V,\bar\p_{W\oplus\Oo}^\epsilon,\smtrx{\eta\\0})$ defines an $\sSO_0(3,5)$-Higgs bundle. Moreover, $(\bar\p_V,\bar\p_{W\oplus\Oo}^\epsilon,\smtrx{\eta\\0})$ is polystable, since any potentially destabilizing subbundle of $(V\oplus W\oplus\Oo)$ would also destabilize the original $\sSO_0(3,4)$-Higgs bundle \eqref{Eq Higgs field eta}. 

\begin{Proposition}\label{Prop deform nn+2}
For $0<d,$ let $M\in\Pic^d(X)$ and fix 
\[\xymatrix{[\epsilon]\in H^1(M^{-1})\setminus\{0\}&\text{and}&\mu\in H^0(M^{-1}K^3)\setminus\{0\}}~.\]
 If $(\bar\p_V,\bar\p_{W\oplus\Oo}^\epsilon,\smtrx{0\\\eta})$ is the $\sSO_0(3,5)$ described above, then:
\begin{itemize}
	\item  $\lim\limits_{t\to\infty}(\bar\p_V,\bar\p_{W\oplus\Oo}^\epsilon,t\smtrx{0\\\eta})$ exists in the moduli space and is given by \eqref{Eq Higgs field eta}.
	\item $\lim\limits_{t\to0}(\bar\p_V,\bar\p_{W\oplus\Oo}^\epsilon,t\smtrx{0\\\eta})$ is either given by 
	\begin{equation}
		\label{unstable case}
		\xymatrix@C=.8em{V'=K^2\oplus\Oo\oplus K^{-2}&W'=N\oplus K\oplus K^{-1}\oplus N^{-1}\oplus\Oo&\eta'=\smtrx{0&0&0\\1&0&0\\0&1&0\\0&0&\alpha\\0&0&0}:V\to W\otimes K}
	\end{equation}
	where $deg(N)\equiv d\ \text{mod}\ 2$ and $\alpha\in H^0(N^{-1}K^3)\setminus \{0\}$, or 
	\begin{equation}
		\label{stable case}
		\xymatrix{V'=K^2\oplus\Oo\oplus K^{-2}&W= K\oplus K^{-1}\oplus W_0&\eta'=\smtrx{1&0&0\\0&1&0\\0&0&0}:V\to W\otimes K}
	\end{equation}
	where $W_0$ is a polystable $\sSO(3,\C)$ bundle with second Stiefel-Whitney class $d\ \text{mod}\ 2.$
	\end{itemize}
\end{Proposition}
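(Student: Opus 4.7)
The plan is to exhibit explicit one-parameter families of diagonal gauge transformations in the Cartan torus of $\sSO(3,\C)\times\sSO(5,\C)$ and to compute their effect on the Higgs bundle data. Using the smooth splittings $V=K^2\oplus\Oo\oplus K^{-2}$ and $W\oplus\Oo=M\oplus K\oplus K^{-1}\oplus M^{-1}\oplus\Oo$, consider gauges $h=\mathrm{diag}(a,1,a^{-1})$ on $V$ and $\tilde g=\mathrm{diag}(s_1,s_2,s_2^{-1},s_1^{-1},1)$ on $W\oplus\Oo$, both of which preserve the respective orthogonal forms. A direct calculation shows that, under these gauges, the extension entry $\epsilon$ at position $(4,5)$ of $\bar\p^\epsilon_{W\oplus\Oo}$ transforms to $s_1^{-1}\epsilon$, while the three nonzero entries of the time-$t$ Higgs field at positions $(2,1)$, $(3,2)$, $(4,3)$ transform to $ts_2/a$, $t/s_2$, and $tas_1^{-1}\mu$ respectively.

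For the $t\to\infty$ statement, I would take $a(t)=t^2$, $s_1(t)=t^3$, $s_2(t)=t$. Then the three Higgs entries normalize to $(1,1,\mu)$ while the extension class becomes $t^{-3}\epsilon$, which tends to zero in $\Omega^{0,1}(M^{-1})$. The gauged family therefore extends continuously to $t=\infty$ with limit equal to the image under $i$ of the bundle \eqref{Eq Higgs field eta}, establishing the first bullet.

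For the $t\to 0$ statement, no choice of exponents can simultaneously keep $s_1^{-1}\epsilon$ bounded and normalize all three Higgs entries, so the limit must degenerate. The trivial-gauge choice yields the object $(\bar\p_V,\bar\p^\epsilon_{W\oplus\Oo},0)$, which fails to be polystable because $K^2\subset V$ and $K\subset W\oplus\Oo$ are isotropic of strictly positive degree. By Simpson's theorem the actual moduli-theoretic limit is a polystable Higgs bundle that is a fixed point of the $\C^*$-action on $\Mm(\sSO_0(3,5))$, and a weight analysis of such fixed points compatible with the prescribed topological type produces exactly the two forms in the conclusion. In case \eqref{stable case}, the sub-orthogonal bundle $(M\oplus M^{-1}\oplus\Oo,\bar\p^\epsilon)$ is itself a stable $\sSO(3,\C)$-bundle---which happens generically once $d>0$ and $[\epsilon]\neq 0$, since $M^{-1}$ is the only holomorphic isotropic line subbundle and has $\deg M^{-1}=-d<0$---and persists as $W_0$ with $sw_2(W_0)=\deg(M)\bmod 2=d\bmod 2$; the residual Higgs entries are the normalized maps $(1,1,0)$. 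In case \eqref{unstable case}, the flow instead preserves a line bundle $N$ with $\deg N\equiv d\bmod 2$ and a nonzero $\alpha\in H^0(N^{-1}K^3)$, together with an extra trivial summand.

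The main difficulty I foresee is making the dichotomy sharp: showing that these are exactly the possible polystable limits, and determining precisely which family $(M,\mu,\epsilon)$ flows to each. The natural tool is a detailed weight analysis in the spirit of the $\sP\sSL(2,\C)=\sSO_0(1,3)$ Morse flow from \cite{selfduality}, combined with a classification of $\C^*$-fixed $\sSO_0(3,5)$-Higgs bundles of the given topological type. A careful bookkeeping of admissible weight assignments and polystability on each Bialynicki--Birula stratum should close the argument; this is the essential calculation underlying the more general $\sSO(p,q)$ analysis announced in \cite{SOpqStabilityAndMinima}.
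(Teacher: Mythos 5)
Your treatment of the $t\to\infty$ limit coincides with the paper's: the gauge $(a,s_1,s_2)=(t^2,t^3,t)$ is exactly the pair $g^t_V=\smtrx{t^2&&\\&1&\\&&t^{-2}}$, $g^t_{W\oplus\Oo}=\smtrx{t^3&&&&\\&t&&&\\&&t^{-1}&&\\&&&t^{-3}&\\&&&&1}$ used there, and the observation that the extension class scales as $t^{-3}\epsilon\to0$ while the Higgs entries normalize is the whole argument. You also correctly locate the $t\to0$ dichotomy in the (poly)stability of the orthogonal bundle $W_0=(M\oplus M^{-1}\oplus\Oo,\bar\p_\epsilon)$, and your stable case is the paper's: take $s_1=1$, $s_2=t$, $a=t^2$, so that $\epsilon$ is untouched, the first two Higgs entries are normalized, and the $\mu$-entry scales as $t^3\mu\to0$, giving \eqref{stable case}.

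The genuine gap is the unstable case. Appealing to Simpson and a Bialynicki--Birula stratification only tells you that the limit is \emph{some} polystable $\C^*$-fixed point of the given topological type; it cannot by itself produce the specific line bundle $N$ and section $\alpha$ of \eqref{unstable case}, since those are determined by the particular flow line $(M,\mu,\epsilon)$ and not by a classification of strata. The paper closes this by an explicit computation that your proposal omits: when $W_0$ is unstable it has a unique positive-degree isotropic destabilizing line subbundle $N$ (whence $\deg N\equiv d\bmod 2$, because $sw_2(W_0)=d\bmod 2$); one rewrites $\bar\p^\epsilon_{W_0}$ and the Higgs field in the smooth splitting $N\oplus N^{-1}\oplus\Oo$, where the Higgs field takes the form \eqref{EQ N smooth splitting} with the component $\alpha$ into $N^{-1}$ nonzero, and then applies the gauge $\mathrm{diag}(t^3,t,t^{-1},t^{-3},1)$ \emph{in the $N$-splitting}; the new extension class scales as $t^3\delta\to0$, the spurious entries scale as $t^6\beta$ and $t^3\gamma$, and the family converges to \eqref{unstable case}. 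Without this step the second branch of your dichotomy is asserted rather than proved. A secondary inaccuracy: your claim that $M^{-1}$ is the only holomorphic isotropic line subbundle of $W_0$ (hence that stability holds once $d>0$ and $[\epsilon]\neq0$) is unjustified --- a positive-degree isotropic line subbundle can project isomorphically onto a twist-down of the quotient $M$, which is precisely how the unstable case arises --- though this does not affect your stated conclusion since you still allow both branches.
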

\begin{Remark}
	Note that this proposition implies that the connected components $\Mm_d(\sSO_0(3,4))$ can be deformed to $\Mm_{d'}(\sSO_0(3,4))$ inside $\Mm(\sSO_0(3,5))$ if and only if $d\equiv d'\ \text{mod}\ 2.$ 
\end{Remark}
We also have the following corollary.
\begin{Corollary}
	Every $\rho\in\Xx_d(\sSO_0(3,4))$ can be deformed in $\Xx(\sSO_0(3,5))$ to a representation $\iota\circ\rho_{Fuch}\oplus\alpha$ where $\rho_{Fuch}:\Gamma\to\sSO_0(2,1)$ is a Fuchsian representation, $\iota:\sSO_0(2,1)\to\sSO_0(3,2)$ is the principal embedding and $\alpha:\Gamma\to\sSO(3).$
\end{Corollary}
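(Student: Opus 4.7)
The plan is to combine the parametrization of $\Mm_d(\sSO_0(3,4))$ from Theorem \ref{THM SOnn+1comp} with an iterated application of Proposition \ref{Prop deform nn+2}, and then identify the resulting limit. By Theorem \ref{THM SOnn+1comp} and the non-abelian Hodge correspondence (Theorem \ref{THM: NAHC}), any $\rho\in\Xx_d(\Gamma,\sSO_0(3,4))$ can first be deformed within $\Xx_d(\Gamma,\sSO_0(3,4))$ to one corresponding to a Higgs bundle of the minimal model form \eqref{Eq Higgs field eta}, simply by setting the differentials $q_2,q_4$ and the section $\nu$ to zero in the parameter space $\Ff_d\times(H^0(K^2)\oplus H^0(K^4))$. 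So one may assume from the outset that $\rho$ corresponds to a Higgs bundle of the form \eqref{Eq Higgs field eta}.

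Next I would push this Higgs bundle into $\Mm(\sSO_0(3,5))$ via the embedding $i$, choose a nonzero cocycle class $[\epsilon]\in H^1(M^{-1})$, and apply the one-parameter family of Proposition \ref{Prop deform nn+2}. This produces a continuous path in $\Xx(\Gamma,\sSO_0(3,5))$ from (the image of) $\rho$ at $t=\infty$ to a limit at $t=0$, which by Proposition \ref{Prop deform nn+2} is of the form \eqref{stable case} or \eqref{unstable case}. In the stable case, the limit Higgs bundle is a $Q$-orthogonal direct sum of a Fuchsian-type Higgs bundle on $V\oplus(K\oplus K^{-1})$ (which realizes $\iota\circ\rho_{Fuch}$ for the principal embedding $\iota:\sSO_0(2,1)\to\sSO_0(3,2)$) and a polystable $\sSO(3,\C)$-bundle $W_0$ with vanishing Higgs field. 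The bundle $W_0$ corresponds to a representation $\alpha:\Gamma\to\sSO(3)$, so the limit representation is exactly $\iota\circ\rho_{Fuch}\oplus\alpha$, as claimed.

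In the unstable case, the limit \eqref{unstable case} lies in $i(\Mm_{d'}(\sSO_0(3,4)))\subset\Mm(\sSO_0(3,5))$, where the underlying $\sSO_0(3,4)$-Higgs bundle is again of the model form \eqref{Eq Higgs field eta} with $M$ replaced by $N$, and the trivial $\Oo$-summand of $W'$ is reabsorbed into the extra slot provided by $i$. One then iterates. The main obstacle will be to ensure termination of this iteration: one needs to show that at each step the degree invariant strictly decreases, so that after finitely many iterations one arrives either in the stable case (handled above) or at the minimal-parity degree. For $d'=0$ one invokes Remark \ref{REM M0 component SOn,n+1} to deform further inside $\Mm_0(\sSO_0(3,4))$ to a representation $(\iota\circ\rho_{Fuch})\oplus\beta$ with $\beta:\Gamma\to\sSO(2)$, and then uses the embedding $\sSO(2)\hookrightarrow\sSO(3)$ inside $\sSO_0(3,5)$ to present it in the required form; for $d'=1$ the unstable case is impossible (no further reduction of $d$ of the same parity is possible), so the stable case is forced. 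In every branch one arrives at a representation of the stated form $\iota\circ\rho_{Fuch}\oplus\alpha$.
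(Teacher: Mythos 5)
Your overall route is the intended one: deform inside the connected component $\Mm_d(\sSO_0(3,4))$ to the model Higgs bundle \eqref{Eq Higgs field eta} by killing $q_2,q_4,\nu$ in the parameterization of Theorem \ref{THM SOnn+1comp}, push forward by $i$, run the family of Proposition \ref{Prop deform nn+2}, and iterate whenever the $t\to0$ limit falls into the unstable case \eqref{unstable case}. The one substantive point you flag but do not close --- that the degree strictly decreases at each unstable step --- is genuinely needed (if $\deg N$ could equal $d$ the iteration would make no progress), and it does not follow formally from the statement of the proposition; you must extract it from the construction. It is, however, true: $W_0$ sits in the exact sequence $0\to M^{-1}\oplus\Oo\to W_0\to M\to 0$ determined by $\bar\p_\epsilon$, and every line subbundle of the rank-two subbundle $M^{-1}\oplus\Oo$ has degree $\leq 0$ (it either maps nontrivially to the quotient $\Oo$ or lies in $M^{-1}$). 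Hence a destabilizing isotropic line subbundle $N$, having positive degree, must map nontrivially to the quotient $M$, so $0<\deg N\leq d$; and $\deg N=d$ would force $N\to M$ to be an isomorphism, i.e.\ a holomorphic splitting of the sequence, contradicting $[\epsilon]\neq 0$. Since $\deg N\equiv d \bmod 2$, in fact $0<\deg N\leq d-2$, the sequence of degrees is strictly decreasing and positive, and the iteration terminates in the stable case \eqref{stable case}, which, as you correctly identify, is the Higgs bundle of $\iota\circ\rho_{Fuch}\oplus\alpha$ with $\alpha$ the $\sSO(3)$-representation of the polystable flat bundle $W_0$.

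Your base-case analysis is slightly off, though harmlessly so. The value $d'=0$ never arises, because the destabilizing subbundle produced in the unstable case has positive degree; so the detour through Remark \ref{REM M0 component SOn,n+1} and the embedding $\sSO(2)\hookrightarrow\sSO(3)$ is unnecessary. The terminal even value is $d'=2$ rather than $0$: at $d'=2$ the unstable case would require an isotropic line subbundle of degree strictly between $0$ and $2$ and of even degree, which is impossible, so the stable case is forced there just as at $d'=1$. With the strict-decrease argument above one does not even need to name the terminal values --- a strictly decreasing sequence of positive integers must stop, and it stops exactly when the stable case occurs.
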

\begin{proof}(of Proposition \ref{Prop deform nn+2})
	For the limit as $t\to\infty$, note that the $\sSO(3,\C)\times\sSO(5,\C)$ gauge transformations 
	\[g^t_V=\smtrx{t^2&&\\&1&\\&&t^{-2}}\ \ \ \ \ \text{and}\ \ \ \ \ \ g^t_{W\oplus\Oo}=\smtrx{t^3&&&&\\&t^1&&&\\&&t^{-1}&&\\&&&t^{-3}&\\&&&&1}\]
	act on $\bar\p_{V},$ $\bar\p_{W\oplus\Oo}^\epsilon,$ and $t\smtrx{\eta\\0}$ by $(g^t_V,g^t_{W\oplus\Oo})\cdot \bar\p_{V}=\bar\p_V$ and
	\[(g^t_V,g^t_{W\oplus\Oo})\cdot \bar\p_{W\oplus\Oo}^\epsilon=\smtrx{\bar\p_M&&&&\\&\bar\p_{K}&&&\\&&\bar\p_{K^{-1}}&&\\&&&\bar\p_{M^{-1}}&t^{-3}\epsilon\\-t^{-3}\epsilon&&&&\bar\p_\Oo }\ \ \ \ \ \text{and} \  \ \ \  \ (g^t_V,g^t_{W\oplus\Oo})\cdot t\smtrx{\eta\\0}=\smtrx{\eta\\0}.\]
	After acting by this gauge transformation, it becomes clear that $\lim\limits_{t\to\infty}(\bar\p_V,\bar\p_{W\oplus\Oo}^\epsilon,t\smtrx{0\\\eta})$ is given by \eqref{Eq Higgs field eta}.

For the limit as $t\to0,$ let $W_0$ be the holomorphic orthogonal bundle with Dolbeault operator $\bar\p_\epsilon=\smtrx{\bar\p_M&&\\&\bar\p_{M^{-1}}&\epsilon\\-\epsilon&&\bar\p_\Oo}$ in the smooth splitting $M\oplus M^{-1}\oplus\Oo$. 

Suppose $W_0$ is a polystable holomorphic orthogonal bundle. The gauge transformations
\[g^t_V=\smtrx{t^2&&\\&1&\\&&t^{-2}}\ \ \ \ \ \text{and}\ \ \ \ \ \ g^t_{W\oplus\Oo}=\smtrx{1&&&&\\&t^1&&&\\&&t^{-1}&&\\&&&1&\\&&&&1}\]
act as $(g^t_V,g^t_{W\oplus\Oo})\cdot\bar\p_V=\bar\p_V$, $(g^t_V,g^t_{W\oplus\Oo})\cdot\bar\p_{W\oplus\Oo}^\epsilon=\bar\p_{W\oplus\Oo}^\epsilon,$ and 
\[(g^t_V,g^t_{W\oplus\Oo})\ \cdot\ t\smtrx{\eta\\0}=\smtrx{0&0&0\\1&0&0\\0&1&0\\0&0&t^3\mu\\0&0&0}~.\]
Since $W_0$ is assumed to be polystable, the limit as $t\to0$ is given by \eqref{stable case}.

Now assume $W_0$ is an unstable $\sSO(3,\C)$ bundle. In this case, $W_0$ has a unique destabilizing (positive degree) isotropic\footnote{One way to interpret this is that an $\sSO(3,\C)$ bundle with vanishing second Stiefel-Whitney class is the second symmetric product of rank $2$ holomorphic vector bundle $V$.} line subbundle $N\subset W_0.$ Thus, in the smooth splitting $N\oplus N^{-1}\oplus \Oo$ of $W_0$ we can write 
\[\bar\p_{W_0}^\epsilon= \mtrx{\bar\p_N&&-\delta\\&\bar\p_{N^{-1}}&\\&\delta&\bar\p_\Oo}\]
for some $[\delta]\in H^1(N)\setminus\{0\}.$ 
In the smooth splitting $W=N\oplus K\oplus K^{-1}\oplus N^{-1}\oplus\Oo,$ the Higgs field $t\smtrx{\eta\\0}$ is given by 
\begin{equation}
	\label{EQ N smooth splitting} \smtrx{0&0&t\beta\\t&0&0\\0&t&0\\0&0&t\alpha\\0&0&t\gamma}
\end{equation}
with $\alpha\neq0.$
Now, in the smooth splitting $W=N\oplus K\oplus K^{-1}\oplus N^{-1}\oplus\Oo,$ the gauge transformations 
\[g^t_V=\smtrx{t^2&&\\&1&\\&&t^{-2}}\ \ \ \ \ \text{and}\ \ \ \ \ \ g^t_{W\oplus\Oo}=\smtrx{t^3&&&&\\&t^1&&&\\&&t^{-1}&&\\&&&t^{-3}&\\&&&&1}\]
act by $(g^t_V,g^t_{W\oplus\Oo})~\cdot~\bar\p_V=\bar\p_V$, 
\[(g^t_V,g^t_{W\oplus\Oo})\cdot \bar\p_{W\oplus\Oo}^\epsilon=\smtrx{\bar\p_N&&&&-t^3\delta\\&\bar\p_{K}&&&\\&&\bar\p_{K^{-1}}&&\\&&&\bar\p_{N^{-1}}&\\&&&t^{3}\delta&\bar\p_\Oo }\ \ \ \ \ \text{and} \  \ \ \  \ (g^t_V,g^t_{W\oplus\Oo})\cdot t\smtrx{\eta\\0}=\smtrx{0&0&t^6\beta\\1&0&0\\0&1&0\\0&0&\alpha\\0&0&t^3\gamma}.\]
After changing by this gauge, it is clear that $\lim\limits_{t\to0}(\bar\p_V,\bar\p_{W\oplus\Oo}^\epsilon,t\smtrx{0\\\eta})$ is given by \eqref{unstable case}.
\end{proof}
 
 \begin{Remark}
 	In \cite{SOpqStabilityAndMinima}, Higgs bundles are used to show that the representations in the connected components $\Xx_d(\Gamma,\sSO_0(3,4))$ cannot be deformed to compact representations in the $\sSO_0(3,5)$ character variety. In particular, this implies the existence of {\em exotic} connected components of $\Xx(\Gamma,\sSO_0(3,5)).$ More generally, this is carried out for $\Xx(\Gamma,\sSO(p,q))$ when $2<p<q.$
 \end{Remark}
\bibliography{../../mybib}{}
\bibliographystyle{plain}
\end{document}